\newtheorem{thm}{Theorem}[section]
\newtheorem{lem}[thm]{Lemma}
\newtheorem{prop}[thm]{Proposition}
\newtheorem{rem}[thm]{Remark}
\newtheorem{rems}[thm]{Remarks}
\newtheorem{defi}[thm]{Definition}
\newtheorem*{defin}{Definition}
\newtheorem{cor}[thm]{Corollary}
\title{On some stable representations of hyperbolic groups}
\author{Ulysse Remfort-Aurat}
\date{}
\newcommand{\R}{\mathbb{R}}
\newcommand{\N}{\mathbb{N}}
\newcommand{\Or}{\tau_{\rho,x}}
\newcommand{\B}{\mathcal{B}}
\newcommand{\p}{\mathcal{P}}
\newcommand{\w}{\mathcal{W}}
\begin{document}

\maketitle

\begin{abstract}
Let $\Gamma$ be a hyperbolic group and $G$ be the isometry group of a Gromov-hyperbolic, proper and geodesic metric space. We study the action of the outer automorphism group $Out(\Gamma)$ on the set $X(\Gamma,G)$ of conjugacy classes of representations of $\Gamma$ into $G$. We construct a family of  $Out(\Gamma)$-invariant subsets of $X(\Gamma,G)$ which contains (stricly or not) the set of conjugacy classes of quasi-convex representations and give a sufficient condition for the induced action to be properly discontinuous. Finally, we give a criterion for a representation to have discrete image and finite kernel and use it when $G=Isom^+(\mathbb{H}^3)$ to find new characterizations of quasi-convex (i.e. convex cocompact) subgroups of $PSL_2(\mathbb{C})$.
\end{abstract}

\tableofcontents

\section{Introduction}

Let $\Gamma$ be a hyperbolic group and $Isom^+(\mathbb{H}^3)\cong PSL_2(\mathbb{C})$ the group of orientation-preserving isometries of the 3-dimensional hyperbolic space. The group $Aut(\Gamma)$ naturally acts on the set of representations $Hom(\Gamma,PSL_2(\mathbb{C}))$ by precomposition and induces an action of the group $Out(\Gamma)$ on the set of conjugacy classes of representations $X(\Gamma,PSL_2(\mathbb{C}))=Hom(\Gamma,PSL_2(\mathbb{C}))\diagup PSL_2(\mathbb{C})$ where $PSL_2(\mathbb{C})$ is understood to act by inner automorphisms. We are interested in the dynamics of this action on invariant subsets, for example, it is well-known (see e.g. \cite{Canary}) that the set of conjugacy classes of convex cocompact representations is an open subset, invariant by the action of $Out(\Gamma)$, on which the action is properly discontinous. If $\rho$ is a representation, an associated orbit map is a $\rho$-equivariant map between a Cayley graph for $\Gamma$, denoted $C(\Gamma)$, and $\mathbb{H}^3$. One way to prove the previous statement is to use the fact that (conjugacy classes of) convex-cocompact representations are exactly those whose orbit maps are quasi-isometric embeddings, i.e., sends all geodesics of $C(\Gamma)$ to uniform quasi-geodesics of $\mathbb{H}^3$. 

However, the dynamics of the action of $Out(\Gamma)$ on the complementary set of conjugacy classes of convex cocompact representations is mostly unknown and depends on the choice of the hyperbolic group $\Gamma$ :\\

On the one hand, when $\Gamma \cong \pi_1(S_g)$, Goldman (see \cite{Goldman}) conjectured that the action of $Out(\pi_1(S_g))$ on the complementary set of conjugacy classes of convex cocompact representations $X(\pi_1(S_g),PSL_2(\mathbb{C}))\setminus CC(\pi_1(S_g),PSL_2(\mathbb{C}))$ is ergodic.\\

On the other hand, when $\Gamma \cong F_k$ is a free group of rank $k\geqslant 2$, Minsky \cite[Theorem 1.1]{Minsky} proved that there is an open set of conjugacy classes of representations of $F_k$ into $PSL_2(\mathbb{C})$, called the set of primitive-stable representations, which is invariant by the action of $Out(F_k)$, strictly larger than the set $CC(F_k,PSL_2(\mathbb{C}))$ and on which $Out(F_k)$ acts properly dicontinously.\\

In this article we will work with a broader framework, from now on, $\Gamma$ will denote a hyperbolic group and $G=Isom(X)$ will denote the isometry group of a Gromov-hyperbolic, proper and geodesic metric space $X$.\\

In that context, quasi-convex representations naturally generalize convex cocompact representations in the sense that they are precisely those whose orbit maps are quasi-isometric embeddings (see e.g. \cite[Corollary 1.8.4]{Bourdon}), hence if we denote by $QC(\Gamma,G)$ the set of conjugacy classes of quasi-convex representations, we can show that $QC(\Gamma,G)$ is an open and $Out(\Gamma)$-invariant subset on which the action is properly discontinuous.\\

Relying on the work of Minsky in \cite{Minsky} and Lee in \cite{Lee}, we generalize the definition of a primitive-stable representation. Starting with an $Aut(\Gamma)$-invariant subset $A\subset \Gamma$, one can define the set of \textit{$A$-stable representations}. Roughly speaking, an \textit{$A$-stable representation} is a representation whose orbit map sends all geodesics corresponding to infinite order elements of $A$ in $C(\Gamma)$, to uniform quasi-geodesics of $X$.\\
We denote by $S_A(\Gamma,G)$ the set of conjugacy classes of $A$-stable representations.

\begin{prop}\label{first}
If $A\subset \Gamma$ is invariant by automorphisms, then $S_A(\Gamma,G)$ is an open $Out(\Gamma)$-invariant subset of $X(\Gamma,G)$ which contains (strictly or not) $QC(\Gamma,G)$.
\end{prop}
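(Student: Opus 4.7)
The plan is to establish three separate claims: the inclusion $QC(\Gamma,G)\subset S_A(\Gamma,G)$, the $Out(\Gamma)$-invariance of $S_A(\Gamma,G)$, and its openness in $X(\Gamma,G)$. The first is immediate: if $\rho$ is quasi-convex, the orbit map $\tau_{\rho,x}$ is a quasi-isometric embedding of the whole Cayley graph $C(\Gamma)$ into $X$, so it sends every bi-infinite geodesic to a $(K,C)$-quasi-geodesic with constants independent of the geodesic chosen. In particular the geodesics associated to infinite order elements of $A$ are sent to uniform quasi-geodesics, so $\rho$ is $A$-stable.

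For $Out(\Gamma)$-invariance, I would fix $\rho$ an $A$-stable representation and $\phi\in Aut(\Gamma)$, then exploit the tautological identity $\tau_{\rho\circ\phi,x}=\tau_{\rho,x}\circ\phi$, where $\phi$ is viewed as a self-quasi-isometry of $C(\Gamma)$ with constants depending only on $\phi$ and the chosen generating set. Given $g\in A$ of infinite order and a bi-infinite geodesic $\alpha\subset C(\Gamma)$ with endpoints $g^{\pm\infty}\in\partial\Gamma$, the image $\phi(\alpha)$ is a uniform quasi-geodesic in $C(\Gamma)$ whose endpoints in $\partial\Gamma$ are $\phi(g)^{\pm\infty}$; by the Morse lemma it remains at bounded Hausdorff distance from any geodesic $\beta$ associated to $\phi(g)$. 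Since $A$ is $Aut(\Gamma)$-invariant, $\phi(g)\in A$ still has infinite order, so by $A$-stability of $\rho$, $\tau_{\rho,x}(\beta)$, and hence $\tau_{\rho,x}(\phi(\alpha))=\tau_{\rho\circ\phi,x}(\alpha)$, is a uniform quasi-geodesic of $X$ with constants depending only on those of $\rho$, of $\phi$, and on the hyperbolicity constant.

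The main obstacle is openness. Here I would invoke the local-to-global principle for quasi-geodesics in Gromov-hyperbolic spaces: for every $(K,C)$ and $\delta$ there exist $L=L(\delta,K,C)$ and constants $(K',C')$ such that any path all of whose length-$L$ subsegments are $(K,C)$-quasi-geodesics is itself a $(K',C')$-quasi-geodesic. Fix $\rho\in S_A(\Gamma,G)$ with $A$-stability constants $(K,C)$, choose such an $L$, and let $B_L\subset\Gamma$ be the ball of word length at most $L$. If $\rho'\in Hom(\Gamma,G)$ is close enough to $\rho$ on a generating set, then $d_X(\tau_{\rho,x}(\gamma),\tau_{\rho',x}(\gamma))<\varepsilon$ for every $\gamma\in B_L$ and any prescribed $\varepsilon>0$. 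For any infinite order $g\in A$ and any geodesic $\alpha$ associated to $g$, every length-$L$ subsegment of $\alpha$ is a $\Gamma$-translate of a finite piece based at the identity; by $\rho'$-equivariance, its image under $\tau_{\rho',x}$ is an isometric translate of an $\varepsilon$-perturbation of the corresponding $(K,C)$-quasi-geodesic segment of $\tau_{\rho,x}$. Choosing $\varepsilon$ small enough, this subsegment remains a quasi-geodesic with slightly deteriorated but uniform constants, and the local-to-global principle upgrades these local estimates to global quasi-geodesic constants independent of $g$. This shows $\rho'\in S_A(\Gamma,G)$; openness descends to $X(\Gamma,G)$ because $A$-stability is preserved under conjugation, which amounts to a change of basepoint in the orbit map.
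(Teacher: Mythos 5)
Your proposal is correct and follows essentially the same route as the paper: the inclusion comes from the orbit-map characterization of quasi-convexity (Proposition \ref{qc}), and openness is obtained exactly as in Proposition \ref{ouvert}, via the local-to-global principle for quasi-geodesics applied to a perturbation of the orbit map controlled on a ball of radius $L$ together with equivariance. The only cosmetic difference is the invariance step, where you invoke the Morse lemma and the stability of quasi-geodesic images under the orbit map (the content of Lemma \ref{l2}) rather than the paper's change-of-generating-set observation --- but that observation is itself justified by precisely your argument.
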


Generalizing the work of the authors in \cite{Well-displacing}, we define the set of $A$-well-displacing representations. If $g$ is an isometry of a metric space $X$, the \textit{translation length} of $g$ is $l(g)= \inf_{x\in X} d(x, g(x))$. Recall that $\Gamma$ acts isometrically on $C_S(\Gamma)$, we denote by $l_S(\gamma)$ the translation length of $\gamma\in \Gamma$ for this action. 

If $A\subset \Gamma$ is $Aut(\Gamma)$-invariant, a representation $\rho$ of $\Gamma$ in an isometry group is said to be \textit{$A$-well-displacing} if there are $J\geqslant 1$ and $B\geqslant 0$ such that :
$$\frac{1}{J}l_S(a)-B \leqslant l(\rho(a)) \leqslant Jl_S(a)+B ~~~~ \forall a \in A.$$

It can be shown that an $A$-stable representation is always $A$-well-displacing and that the converse holds when $A=\Gamma$ (see \cite[Sections 3 and 4]{Well-displacing}). In the following, we prove :

\begin{thm}\label{Awd}
If $A$ is a characteristic subgroup of $\Gamma$, a representation $\rho \in Hom(\Gamma,G)$ is $A$-stable if and only if it is $A$-well-displacing.
\end{thm}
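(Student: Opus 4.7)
The direction $A$-stable $\Rightarrow$ $A$-well-displacing follows essentially as in \cite[Section~3]{Well-displacing}: if $\Or$ sends every axis of an infinite-order $a \in A$ to a uniform $(K,C)$-quasi-geodesic of $X$, then since $a$ translates the axis by $l_S(a)$ and $\rho(a)$ translates the image, the two-sided comparison $\tfrac{1}{K} l_S(a) - C \leq l(\rho(a)) \leq K l_S(a) + C$ is immediate. I concentrate on the converse.

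Assume $\rho$ is $A$-well-displacing with constants $(J,B)$. The crucial use of $A$ being a subgroup is that all integer powers $a^n$ of an infinite-order $a \in A$ lie in $A$, so the well-displacing inequality applies to each $a^n$: $l(\rho(a)^n) \geq \tfrac{1}{J} l_S(a^n) - B$. Dividing by $n$ and letting $n \to \infty$, using that translation length and stable translation length agree up to the hyperbolicity constants in both the hyperbolic group $\Gamma$ and the Gromov-hyperbolic space $X$, I obtain $\tfrac{1}{J} l_S^\infty(a) \leq l^\infty(\rho(a)) \leq J\, l_S^\infty(a)$. Invoking the classical fact (Delzant) that $l_S^\infty$ is bounded below by a uniform $\epsilon_\Gamma > 0$ on infinite-order elements of $\Gamma$, every $\rho(a)$ is loxodromic on $X$ with $l^\infty(\rho(a)) \geq \epsilon_\Gamma / J$ uniformly, and hence admits a quasi-axis $\alpha_a \subset X$.

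For the quasi-geodesic property, fix a bi-infinite geodesic axis $\sigma_a \subset C(\Gamma)$ of $a$, parameterized by arc length. The orbit map $\Or$ is $L$-Lipschitz with $L = \max_{s \in S} d(x, \rho(s) x)$, so $\Or \circ \sigma_a$ is $L$-Lipschitz. For $s, t \in \R$ differing by $k \cdot l_S(a)$ with $k \in \mathbb{Z}$, the $\rho(a)$-equivariance of $\Or \circ \sigma_a$ gives
\[
d_X\bigl(\Or \sigma_a(s),\, \Or \sigma_a(t)\bigr) \;=\; d_X\bigl(y,\, \rho(a)^k y\bigr) \;\geq\; |k|\, l^\infty(\rho(a)) - O(\delta_X),
\]
where $y = \Or \sigma_a(s)$, which is linear in $|t-s|$ with uniform positive slope by the previous step. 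Combined with the Lipschitz upper bound, this yields a quasi-geodesic comparison at scales $\geq l_S(a)$.

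\textbf{Main obstacle.} Extracting \emph{uniform} $(K,C)$-quasi-geodesic constants is the delicate point: within a single fundamental domain of $\sigma_a$, the path $\Or \circ \sigma_a$ may wander by an amount proportional to $l_S(a)$, so the additive error at small scales cannot be absorbed a priori into a $C$ independent of $a$. My plan is to handle this via a Morse-type argument exploiting the $\rho(a)$-equivariance: any $L$-Lipschitz, $\rho(a)$-equivariant path making linear progress under iteration of $\rho(a)$ stays within a Hausdorff distance, depending only on $L$, $J$, $B$, $\epsilon_\Gamma$, $\delta_X$, of the quasi-axis $\alpha_a$. Nearest-point projection to $\alpha_a$ then yields a $(1, O(\delta_X))$-quasi-geodesic on the axis, from which uniform $(K,C)$-quasi-geodesic constants for $\Or \circ \sigma_a$ itself are recovered by the stability of quasi-geodesics in the hyperbolic space $X$.
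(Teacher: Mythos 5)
Your forward direction and the first half of the converse are sound, but the final step --- the ``Morse-type argument'' you propose to overcome your stated main obstacle --- is false as stated, and this is exactly where the content of the theorem lies. An $L$-Lipschitz, $\rho(a)$-equivariant path making linear progress under iteration of $\rho(a)$ need \emph{not} stay within a Hausdorff distance of the axis of $\rho(a)$ bounded in terms of $L,J,B,\epsilon_\Gamma,\delta_X$ alone: inside a single fundamental domain the path has length at most $L\,l_S(a)$ while its endpoints are at distance roughly $\tfrac{1}{J}l_S(a)$, so it can detour up to roughly $\tfrac{1}{2}(L-\tfrac{1}{J})\,l_S(a)$ from the axis and return, and $l_S(a)$ is unbounded over $a\in A_\infty$. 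Equivariance merely repeats the detour in every fundamental domain; it does not suppress it. More fundamentally, your argument extracts from the well-displacing hypothesis only information about the powers $a^n$ of a single element, i.e.\ a lower bound on $d(x,\rho(\gamma)(x))$ in terms of the \emph{translation length} for $\gamma\in\langle a\rangle$; nothing in the proposal controls $d(x,\rho(\gamma)(x))$ for the intermediate group elements $\gamma$ lying on the axis between consecutive powers, and these are in general not in $A$.

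The paper closes this gap with an ingredient your proposal is missing: a ping-pong argument. Since a non-trivial characteristic subgroup of a non-elementary hyperbolic group contains a free subgroup of rank $2$, Proposition \ref{P1} produces $a_1,a_2\in A$ and $\alpha\geqslant 0$ with $|\gamma|_S\leqslant 3\max\{l_S(\gamma),l_S(a_1\gamma),l_S(a_2\gamma)\}+\alpha$. Because $A$ is a subgroup, $a_1a,a_2a\in A$ whenever $a\in A$, so the well-displacing inequality applies to all three elements and upgrades the translation-length control to a displacement bound in terms of \emph{word length}: $d(x,\rho(a)(x))\geqslant \tfrac{1}{3J}|a|_S - c$ for every $a\in A$ with $c$ uniform. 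Applying this to the differences $a^{m-n}$ of points of the uniform quasi-axis $\{a^n\}$ (for $a$ of minimal word length in its conjugacy class) shows that $\Or(\{a^n\})$ is a uniform quasi-geodesic, and the passage to genuine geodesics of $L_S(A)$ is delegated to Lemma \ref{l3}, a Morse-type statement for \emph{orbit maps} in which the comparison between the geodesic and the quasi-axis is performed in $C_S(\Gamma)$ at a uniformly bounded scale and then pushed forward by the Lipschitz map --- not in $X$ at scale $l_S(a)$, which is where your version breaks. So the subgroup hypothesis is used not merely for closure under powers, as you suggest, but for closure under multiplication by a fixed ping-pong pair together with the existence of such a pair inside $A$; without this step I do not see how to repair the argument.
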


As a by-product of our work, we also establish the following general criterion for a representation to have a discrete image and a finite kernel.

\begin{prop}\label{criterion}
Let $\Gamma$ be a hyperbolic group, $\mathcal{C}$ denotes the subset of commutators of $\Gamma$ and $G$ be any isometry group endowed with a suitable topology. If a representation $\rho\in R(\Gamma,G)$ is $C$-well-displacing, then $\rho$ has discrete image and finite kernel.
\end{prop}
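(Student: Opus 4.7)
I would derive both conclusions from the well-displacing inequality applied to carefully chosen commutators, exploiting the geometry of $\partial\Gamma$ and standard ping-pong arguments. Throughout, I assume (implicitly, as in the setting of the paper) that $\Gamma$ is non-elementary; the elementary cases reduce to routine verifications.

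\textbf{Finite kernel.} Suppose $\gamma\in\ker(\rho)$ has infinite order. Then $\gamma$ acts loxodromically on $C_S(\Gamma)$ with two fixed points on $\partial\Gamma$. Pick $\delta\in\Gamma$ outside the maximal elementary subgroup $E(\gamma)$ containing $\gamma$; its fixed-point pair is disjoint from that of $\gamma$. A standard ping-pong argument on $\partial\Gamma$ shows that for $n$ large, $\gamma^n$ and $\delta\gamma^n\delta^{-1}$ generate a rank-two free subgroup which is quasi-isometrically embedded in $C_S(\Gamma)$. Writing $[\gamma^n,\delta]=\gamma^n\cdot(\delta\gamma^n\delta^{-1})^{-1}$ in these free generators, one sees that $l_S([\gamma^n,\delta])\to\infty$. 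But $\gamma^n\in\ker(\rho)$ and $\ker(\rho)$ is normal, so $[\gamma^n,\delta]\in\ker(\rho)$ and $l(\rho([\gamma^n,\delta]))=0$. The lower well-displacing inequality then forces $l_S([\gamma^n,\delta])\le JB$, a contradiction. Hence $\ker(\rho)$ is a torsion normal subgroup; since every infinite normal subgroup of a non-elementary hyperbolic group contains a loxodromic element (indeed a rank-two free subgroup, by a classical result of Gromov), $\ker(\rho)$ must be finite.

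\textbf{Discrete image.} Assume to the contrary that there is a sequence $(\gamma_n)\subset\Gamma\setminus\ker(\rho)$ with $\rho(\gamma_n)\to e$ in $G$. Under the ``suitable topology'' on $G$ (continuous group operations and upper semi-continuity of $l$), for every $\delta\in\Gamma$,
$$\rho([\gamma_n,\delta])=[\rho(\gamma_n),\rho(\delta)]\longrightarrow e,\qquad l(\rho([\gamma_n,\delta]))\longrightarrow 0.$$
The upper well-displacing inequality then yields $l_S([\gamma_n,\delta])\le J\, l(\rho([\gamma_n,\delta]))+JB$, so there is a fixed constant $M$ such that $l_S([\gamma_n,\delta])\le M$ for $n$ large (depending on $\delta$).

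\textbf{Main obstacle — converting this into a bound on $\gamma_n$.} The technical heart is to show that uniform boundedness of $l_S([\gamma_n,\delta])$ for just two suitably chosen $\delta$'s forces $\gamma_n$ into a finite subset of $\Gamma$. The lemma I would aim to prove is: for every loxodromic $\eta\in\Gamma$, there is $R=R(M,\eta)$ with
$$\{\gamma\in\Gamma\,:\,l_S([\gamma,\eta])\le M\}\subseteq N_R(E(\eta)),$$
where $N_R$ denotes the $R$-neighbourhood in the word metric. The heuristic is that $l_S(\gamma\eta\gamma^{-1}\eta^{-1})$ small forces the axes of $\eta$ and of the conjugate loxodromic $\gamma\eta\gamma^{-1}$ to be close, so $\gamma$ must coarsely stabilize the axis of $\eta$. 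Choosing $\eta_1,\eta_2$ loxodromic with disjoint fixed-point pairs on $\partial\Gamma$ (possible by non-elementarity of $\Gamma$), the intersection $N_R(E(\eta_1))\cap N_R(E(\eta_2))$ is finite, two quasi-axes with disjoint endpoint pairs having bounded coarse intersection in a Gromov-hyperbolic space. Applying the lemma to $\delta=\eta_1$ and $\delta=\eta_2$ then places $\gamma_n$ in a finite subset of $\Gamma$ for $n$ large; passing to a constant subsequence $\gamma_n=\gamma$ gives $\rho(\gamma)=\lim\rho(\gamma_n)=e$, contradicting $\gamma_n\notin\ker(\rho)$. The delicate part is really the quantitative lemma: it sharpens the well-known fact that loxodromic centralizers in hyperbolic groups are virtually cyclic, and producing a constant $R$ independent of $\gamma$ (depending only on $M$ and $\eta$) is the main technical work.
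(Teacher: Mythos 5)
Your finite-kernel argument is essentially the paper's: the paper also picks an infinite-order $\gamma$ in the kernel (via the fact that an infinite subgroup of a hyperbolic group contains an infinite-order element), an auxiliary $h$ with disjoint fixed-point pair, and derives a contradiction from commutators of unbounded translation length lying in the kernel. Your justification that $l_S([\gamma^n,\delta])\to\infty$ via the quasi-isometrically embedded free subgroup is slightly too quick -- quasi-isometric embedding of $\langle \gamma^n,\delta\gamma^n\delta^{-1}\rangle$ controls word length inside that subgroup, whereas $l_S$ infimizes over conjugation by all of $\Gamma$, and the embedding constants depend on $n$ -- but the standard ping-pong estimates (as in the paper's Lemmas \ref{L1}--\ref{L2}) do give the lower bound $l_S(uv)\gtrsim\max(|u|_S,|v|_S)$ for a ping-pong pair, so this part is sound and at the same level of detail as the paper.

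The discreteness half has a genuine gap. Your entire argument rests on the quantitative lemma $\{\gamma : l_S([\gamma,\eta])\le M\}\subseteq N_R(E(\eta))$ with $R$ depending only on $M$ and $\eta$, which you state with a heuristic and defer as ``the main technical work''; that lemma \emph{is} the proof, and it is not routine. The difficulty is that $l_S([\gamma,\eta])\le M$ only forces the quasi-axes of $\eta$ and $\gamma\eta\gamma^{-1}$ to overlap coherently along a segment of length roughly $2\,l_S(\eta)-M-O(\delta)$, which is vacuous when $l_S(\eta)$ is small compared to $M$; the natural fix is to replace $\eta$ by a high power $\eta^k$, but $l_S([\gamma,\eta^k])$ is not controlled by $l_S([\gamma,\eta])$, so the lemma as stated for a fixed $\eta$ needs real work (essentially an acylindricity/WPD-type argument) and possibly reformulation. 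The paper avoids all of this by running the argument through the convergence-group property of $\Gamma\curvearrowright\partial C_S(\Gamma)$ (Tukia): from a sequence $(\gamma_n)$ with $\rho(\gamma_n)\to\mathrm{Id}$ one extracts a convergence subsequence with base $(\gamma^-,\gamma^+)$, then chooses $h$ \emph{after the fact} with $\{h^-,h^+\}$ disjoint from $\{\gamma^-,\gamma^+\}$, so that $[\gamma_n,h]$ is itself a convergence sequence with distinct poles and hence has unbounded translation lengths (Lemmas \ref{ll2} and \ref{sort}). Choosing the auxiliary element adaptively is what makes a single, soft, non-quantitative argument suffice; with your fixed choice of $\eta_1,\eta_2$ you must rule out the degenerate case where the poles of $(\gamma_n)$ land on the fixed points of the $\eta_i$, which is exactly where your unproven lemma is doing the work.
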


Focusing on the case $G=PSL_2(\mathbb{C})$, we also find new characterizations of convex cocompact representations :

\begin{thm}\label{df2}
Let $\Gamma$ be a non-elementary hyperbolic group, $\rho\in Hom(\Gamma,PSL_2(\mathbb{C}))$ be a representation, $\mathcal{C}$ denotes the subset of commutators of $\Gamma$ and $\mathcal{D}=[\Gamma,\Gamma]$ denotes the derived subgroup of $\Gamma$. The following are equivalent :
\begin{itemize}
\item The representation $\rho$ is convex cocompact,
\item The representation $\rho$ is $\mathcal{C}$-stable,
\item The representation $\rho$ is $\mathcal{D}$-well-displacing.
\end{itemize}
\end{thm}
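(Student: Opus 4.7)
My plan is to prove the equivalences via the cycle $(1)\Rightarrow(3)\Rightarrow(2)\Rightarrow(1)$, so that each implication can invoke a different one of the previously established results.

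The implication $(1)\Rightarrow(3)$ is direct: convex cocompactness means an orbit map $\tau_{\rho,x}:C(\Gamma)\to\h$ is a quasi-isometric embedding, and standard comparisons of word length with translation length give the full $\Gamma$-well-displacing estimate $\frac{1}{J}l_S(\gamma)-B\leqslant l(\rho(\gamma))\leqslant Jl_S(\gamma)+B$, which restricts in particular to $\mathcal{D}$. For $(3)\Rightarrow(2)$, the derived subgroup $\mathcal{D}=[\Gamma,\Gamma]$ is a characteristic subgroup, so Theorem \ref{Awd} applies to turn $\mathcal{D}$-well-displacing into $\mathcal{D}$-stability; since $\mathcal{C}\subset\mathcal{D}$, the uniform quasi-geodesic control on orbit images of geodesics representing infinite-order elements of $\mathcal{D}$ restricts to those for elements of $\mathcal{C}$, yielding $\mathcal{C}$-stability.

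The bulk of the argument is $(2)\Rightarrow(1)$. Given $\mathcal{C}$-stability, the general fact that stability implies well-displacing on the same set gives $\mathcal{C}$-well-displacing, and Proposition \ref{criterion} then forces $\rho(\Gamma)\subset PSL_2(\mathbb{C})$ to be discrete and $\ker\rho$ to be finite, so $\rho(\Gamma)$ is a non-elementary Kleinian group. The remaining step is to upgrade to convex cocompactness, which I would handle in two sub-steps. \emph{Absence of parabolics}: if $\rho(\gamma_0)$ were parabolic for some infinite-order $\gamma_0$, I pick $h\in\Gamma\setminus E(\gamma_0)$ outside the maximal virtually cyclic subgroup containing $\gamma_0$ (possible since $\Gamma$ is non-elementary), and consider the commutators $c_n=[\gamma_0^n,h]\in\mathcal{C}$. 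Their word length grows linearly in $n$, whereas a direct trace computation in $PSL_2(\mathbb{C})$, in coordinates where $\rho(\gamma_0)$ fixes $\infty$, gives $\operatorname{tr}(\rho(c_n))=2+n^2 v^2 c^2$ for constants $v,c$, whence $l(\rho(c_n))=O(\log n)$, contradicting $\mathcal{C}$-well-displacing. \emph{Promotion to convex cocompactness}: once $\rho$ is discrete, of finite kernel, and without parabolics, I would promote the quasi-geodesic control on commutator geodesics to all geodesics of $C(\Gamma)$, which realises the orbit map as a quasi-isometric embedding and hence $\rho$ as convex cocompact.

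The main obstacle is this last promotion step. Discreteness together with absence of parabolics alone is not sufficient for convex cocompactness, as shown by degenerate surface group representations, so the argument must genuinely exploit $\mathcal{C}$-stability and not merely the consequences extracted so far. The cleanest route I foresee is a fellow-travelling argument inside $C(\Gamma)$: any geodesic word $w\in\Gamma$ should $k$-fellow-travel, at bounded Hausdorff distance, a concatenation of commutator words whose orbit images are uniform quasi-geodesics in $\h$ by $\mathcal{C}$-stability; combining uniform fellow-travelling constants with the acylindrical dynamics of $\rho(\Gamma)$ on $\h$ (afforded by discreteness plus no parabolics) should yield the required uniform quasi-geodesic property. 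Making the fellow-travelling effective and independent of $w$ will be the technical heart of the theorem.
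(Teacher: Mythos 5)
Your implications $(1)\Rightarrow(3)$ and $(3)\Rightarrow(2)$ match the paper exactly: convex cocompact implies $\Gamma$-well-displacing (Proposition \ref{wd}) hence $\mathcal{D}$-well-displacing, and since $\mathcal{D}$ is characteristic, Theorem \ref{Awd} upgrades this to $\mathcal{D}$-stability, which restricts to $\mathcal{C}$-stability because $\mathcal{C}\subset\mathcal{D}$. The problem is $(2)\Rightarrow(1)$, where your argument has a genuine gap that you yourself flag: after extracting discreteness and finiteness of the kernel from Proposition \ref{criterion}, and (plausibly) excluding parabolics by the trace computation on $[\gamma_0^n,h]$, you still need to promote a discrete, parabolic-free, $\mathcal{C}$-stable representation to a convex cocompact one, and the fellow-travelling scheme you sketch is not carried out. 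It is also not clear it can be carried out as stated: you would need every bi-infinite geodesic of $C_S(\Gamma)$ to be uniformly shadowed by axes of infinite-order commutators with constants independent of the geodesic, and then a local-to-global argument on the image; none of the uniformity is established, and "acylindrical dynamics" of a discrete parabolic-free group does not by itself rule out geometrically infinite behaviour, as your own degenerate-surface-group caveat shows.

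The paper closes this implication by an entirely different, softer mechanism that you are missing: it proves that $S_{\mathcal{C}}(\Gamma,G)$ is \emph{open} in $X(\Gamma,G)$ (Proposition \ref{ouvert}, via the local-to-global Proposition \ref{local}), that it is contained in $AH(\Gamma,PSL_2(\mathbb{C}))$ (Proposition \ref{CS2}, your Proposition \ref{criterion}), and then invokes the generalized Sullivan rigidity theorem (Theorem \ref{Sullivan}): the interior of $AH(\Gamma,PSL_2(\mathbb{C}))$ is exactly $CC(\Gamma,PSL_2(\mathbb{C}))$. Hence $S_{\mathcal{C}}(\Gamma,G)\subset \mathrm{Int}(AH(\Gamma,G))=CC(\Gamma,G)$, with no analysis of parabolics or of individual geodesics at all. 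To repair your proof you should replace the entire "promotion" step (and the now-unnecessary parabolic exclusion) by this openness-plus-Sullivan argument; note that this is also where the hypothesis $G=PSL_2(\mathbb{C})$ is genuinely used, since Sullivan's theorem is special to that setting.
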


Going back to the more general framework and focusing on dynamics, we exhibit a sufficient condition on the $Aut(\Gamma)$-invariant subset $A\subset \Gamma$ for the action of $Out(\Gamma)$ on the set $S_A(\Gamma,G)$ to be properly discontinuous.

\begin{defin}[Test subset]
A subset $A\subset \Gamma$ is a \textit{test subset} if, for all sequences $(\phi_n)_n$ of distinct elements of $Out(\Gamma)$,  there is $a\in A$ such that $\limsup_n~ l_S(\phi_n(a)) = \infty$.
\end{defin}

\begin{thm}\label{Apd}
If $A\subset \Gamma$ is $Aut(\Gamma)$-invariant and is a test subset, then $Out(\Gamma)$ acts properly discontinuously on $S_A(\Gamma,G)$.
\end{thm}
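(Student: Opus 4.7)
The strategy is to argue by contradiction, combining the test-subset hypothesis with the fact (recalled before Theorem \ref{Awd}) that an $A$-stable representation is always $A$-well-displacing. If $K \subset S_A(\Gamma, G)$ were compact with $\phi_n \cdot K \cap K \neq \emptyset$ for infinitely many distinct $\phi_n \in Out(\Gamma)$, the plan is to force $l_S(\phi_n^{-1}(a))$ to remain bounded in $n$ for some $a \in A$, contradicting the test-subset definition.

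The first step is to show that any compact $K \subset S_A(\Gamma, G)$ admits uniform stability constants: there exist $J \geq 1$ and $B \geq 0$ such that every $[\rho] \in K$ is $A$-stable with constants $(J, B)$, hence satisfies
\[\frac{1}{J} l_S(a) - B \leq l(\rho(a)) \leq J l_S(a) + B \quad \text{for all } a \in A.\]
This rests on the fact that $S_A(\Gamma, G) = \bigcup_{J, B} S_A^{J, B}(\Gamma, G)$, where $S_A^{J, B}$ denotes the open subset of representations whose orbit map sends every $A$-axis to a $(J, B)$-quasi-geodesic---the same stratification underlying the openness in Proposition \ref{first}---so that compactness of $K$ produces a finite subcover and uniform constants.

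Assume for contradiction that there is an infinite sequence $(\phi_n)_n$ of distinct elements of $Out(\Gamma)$ together with $[\rho_n], [\sigma_n] \in K$ such that $[\sigma_n] = \phi_n \cdot [\rho_n]$, so that $\sigma_n$ is conjugate to $\rho_n \circ \phi_n^{-1}$. Since $(\phi_n^{-1})_n$ is also a sequence of distinct elements, the test-subset hypothesis provides $a \in A$ with $\limsup_n l_S(\phi_n^{-1}(a)) = \infty$. Because $A$ is $Aut(\Gamma)$-invariant we have $\phi_n^{-1}(a) \in A$; applying the lower bound of $A$-well-displacing to $\rho_n$ at $\phi_n^{-1}(a)$, the upper bound to $\sigma_n$ at $a$, and the conjugacy-invariance of translation length yields
\[\frac{1}{J} l_S(\phi_n^{-1}(a)) - B \leq l(\rho_n(\phi_n^{-1}(a))) = l(\sigma_n(a)) \leq J l_S(a) + B,\]
hence $l_S(\phi_n^{-1}(a)) \leq J^2 l_S(a) + 2JB$ for every $n$, contradicting the unboundedness.

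The main technical obstacle is the first step---extracting genuinely uniform stability constants from compactness of $K$. It requires a careful description of the open stratification of $S_A(\Gamma, G)$, and is the only place where the topology of the space of representations enters; the remainder is a direct unwinding of the $A$-well-displacing inequalities combined with the test-subset hypothesis.
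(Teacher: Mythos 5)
Your proof is correct and follows essentially the same route as the paper: the uniform well-displacing constants on a compact subset are exactly the content of the paper's Corollary \ref{displacing compact} (obtained by combining the locally uniform openness statement of Proposition \ref{ouvert} with Proposition \ref{displacing} and a finite subcover), and the concluding chain of inequalities is identical to the paper's. The only minor imprecision is your claim that each stratum $S_A^{J,B}$ is itself open --- the paper only shows that a $(K_0,C_0)$-stable class has a neighbourhood of $(K,C)$-stable classes for possibly different $(K,C)$ --- but this does not affect the compactness argument.
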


In investigating whether certain subsets of $\Gamma$ are test subsets, we answered in the affirmative a question asked by Canary in the appendix of \cite{Canary}, namely that the set of generators and products of distinct generators of a hyperbolic group is always a test subset.

\begin{thm}\label{Can}
For all hyperbolic groups $\Gamma$, for all finite generating set $S$ and all $M\geqslant 0$, the set 
$$\{\phi\in Out(\Gamma)  ~|~ l_S(\phi(b))\leqslant M ~~ \forall b\in \B_S\}$$
is finite where $\B_S= S \cup \{ss'~|~ s,s'\in S$, $s\ne s' \}$ is the set of elements and product of distinct elements of $S$.
\end{thm}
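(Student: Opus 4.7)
The plan is to argue by contradiction: suppose $(\phi_n)_n$ is an infinite sequence of distinct elements of the set in question, and choose representatives $\Phi_n \in \mathrm{Aut}(\Gamma)$. The idea is to exploit the freedom of replacing $\Phi_n$ by $c_g \circ \Phi_n$ (an inner composition that preserves the outer class) in order to reposition each $\Phi_n$ at a well-chosen basepoint. Concretely, set
$$\lambda_n := \inf_{y \in C_S(\Gamma)} \max_{s \in S} d(y, \Phi_n(s) \cdot y),$$
the \emph{joint minimal displacement} of the generators. The proof splits according to whether $(\lambda_n)_n$ is bounded.

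If $\lambda_n \leq C$ uniformly in $n$, pick a vertex $y_n$ nearly realizing the infimum and set $\Phi_n' := c_{y_n^{-1}} \circ \Phi_n$, which represents the same outer class. Then $|\Phi_n'(s)|_S = d(y_n, \Phi_n(s) \cdot y_n) \leq C+1$ for every $s \in S$, so each tuple $(\Phi_n'(s))_{s\in S}$ lies in the finite set $\{g \in \Gamma : |g|_S \leq C+1\}^{|S|}$. Since an automorphism is determined by its values on $S$, only finitely many $\Phi_n'$ are possible, and pigeonhole yields $\phi_n = \phi_m$ for some $n \neq m$, contradicting distinctness.

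It remains to exclude $\lambda_n \to +\infty$ (which we may assume after extraction), and this is where the product elements $ss'$ become crucial. View each $\Phi_n$ as an isometric action of $\Gamma$ on the $\delta$-hyperbolic space $C_S(\Gamma)$, rescale the metric by $\lambda_n^{-1}$, and extract (via a Bestvina--Paulin style ultralimit) a limiting isometric action of $\Gamma$ on an $\R$-tree $T$. The normalization $\lambda_n \to \infty$ forces unit joint minimal displacement in the limit, so in particular $T$ has no global $\Gamma$-fixed point. On the other hand, the hypothesis $l_S(\Phi_n(b)) \leq M$ for every $b \in \mathcal{B}_S$ gives, after dividing by $\lambda_n$, zero translation length on $T$ for every generator $s \in S$ and every product $ss'$ of distinct generators, so all of these act elliptically on $T$ with non-empty fixed subtrees $F_s$, resp.\ $F_{ss'}$.

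To reach a contradiction, recall the classical fact that two elliptic isometries $a, b$ of an $\R$-tree have elliptic product if and only if $\mathrm{Fix}(a) \cap \mathrm{Fix}(b) \neq \emptyset$. Applied to the limiting actions of $\Phi_n(s), \Phi_n(s')$, this yields $F_s \cap F_{s'} \neq \emptyset$ for every distinct pair $s, s' \in S$. Helly's property for subtrees of an $\R$-tree (finitely many pairwise intersecting subtrees have non-empty common intersection) then gives $\bigcap_{s\in S} F_s \neq \emptyset$, providing a point fixed by every generator, hence by all of $\Gamma$, contradicting the absence of a global fixed point. The main obstacle is the Bestvina--Paulin rescaling itself, which is standard but requires care in verifying that the limiting tree action is non-trivial and that the rescaled translation lengths do converge as claimed; once this machinery is in place, the conclusion is purely tree-combinatorial, and explains cleanly why $\mathcal{B}_S$ is the right test set to consider: adding the products $ss'$ is precisely what upgrades ``each generator fixes a subtree'' to ``the generators share a fixed point''.
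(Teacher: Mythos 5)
Your proposal is correct and follows essentially the same route as the paper: the bounded-$\lambda_n$ pigeonhole argument is the paper's Proposition \ref{infini}, the rescaled limit action on an $\R$-tree is Theorem \ref{Best}, the passage from bounded $l_S(\phi_n(b))$ to ellipticity in the limit is Corollary \ref{bornée}, and the elliptic-product plus Helly argument forcing a global fixed point is exactly Proposition \ref{trivial}. The paper simply packages these steps as separately stated lemmas before invoking them in the proof of Theorem \ref{test}.
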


We now briefly outline the rest of this article. In Section 2, we recall some preliminary facts and theorems about hyperbolic spaces and their isometry group. In Section 3, we present the set $X(\Gamma,G)$ of conjugacy classes of representations of a group $\Gamma$ in an isometry group $G$ and describe the natural action of $Out(\Gamma)$ on $X(\Gamma,G)$. Moreover, we define some classical invariant subsets of $X(\Gamma,G)$ and generalize a theorem of Sullivan. In Section 4, we construct new $Out(\Gamma)$-invariant subsets of $X(\Gamma,G)$ using some $Aut(\Gamma)$-invariant subset of $\Gamma$ and prove Proposition \ref{first} as well as Theorems \ref{Apd} and \ref{Awd}. In Section 5, we recall some theory on $\mathbb{R}$-trees and prove Theorem \ref{Can}. Finally, in last section, we recall some basic facts about convergence groups and use it to prove Proposition \ref{criterion} and Theorem \ref{df2}. In the appendix, we show that the derived subgroups of free groups of rank bigger than 3 contains a finite test subset.

\section{Hyperbolic spaces and their isometry groups}

Good references for this section are \cite{Quasi-geo} or in \cite{Ghys dlH}.

\subsection{Hyperbolic spaces and their compactification}

Let $(X,d)$ be a metric space and $x,y,w\in X$. The \textit{Gromov product} of $x$ and $y$ based at $w$ and denoted by $\langle x,y \rangle_w$ is defined by 

$$\langle x,y \rangle_w=\frac{1}{2}(d(x,w)+d(y,w)-d(x,z))$$

A metric space $X$ is \textit{Gromov-hyperbolic} if there is $\delta>0$ such that for all $w,x,y,z$ in $X$, we have 

$$\langle x,z \rangle_w \geqslant \min\{\langle x,y \rangle_w,\langle y,z \rangle_w\} - \delta$$

If a metric space $X$ is Gromov-hyperbolic, then we call such a $\delta$ a \textit{hyperbolicity constant} for $X$.\\

If $X$ is a Gromov-hyperbolic and geodesic metric space, we denote by $\partial X$ the Gromov boundary of $X$ and $\overline{X}= X\cup \partial X$ its Gromov compactification.

\subsection{Isometries of a hyperbolic space}

Let $G=Isom(X)$ be the isometry group of a Gromov-hyperbolic and geodesic metric space $X$. Let $g\in G$ and $o\in X$.\\

We define two invariants of conjugacy classes of elements of $G$ which will be helpful to get a classification.\\

The \textit{translation length of $g$} denoted $l(g)$, is defined by $l(g)=\inf_{x\in X} d(x,g(x))$.\\
Sometimes, it will be more convenient to work with the \textit{stable norm of $g$} denoted $N(g)$ and defined by $N(g)=\lim_{n\to +\infty}\frac{d(o,g^n(o))}{n}$.\\

One can check that the stable norm is well-defined by a subadditivity argument and that the limit does not depend on the choice of the base point $o\in X$.

The next proposition shows that, in our context, these two invariants only differ by an additive constant. Its proof can be found in \cite[Proposition 6.4]{Quasi-geo} :

\begin{prop}\label{translation}
If $X$ is a $\delta$-hyperbolic and geodesic metric space and $g\in Isom(X)$ is an isometry of $X$, then
\begin{equation*}
N(g)\leqslant l(g) \leqslant N(g)+16\delta
\end{equation*}
\end{prop}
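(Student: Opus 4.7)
The plan is to establish the two inequalities separately.

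The lower bound $N(g)\leqslant l(g)$ is elementary and does not use hyperbolicity. For any $x\in X$ and $n\geqslant 1$, the triangle inequality along the orbit yields
$$d(x,g^n(x))\leqslant \sum_{k=0}^{n-1} d(g^k(x),g^{k+1}(x)) = n\,d(x,g(x)).$$
Taking $o=x$, dividing by $n$, and letting $n\to\infty$ gives $N(g)\leqslant d(x,g(x))$; since $N(g)$ does not depend on the base-point, the infimum over $x\in X$ then produces $N(g)\leqslant l(g)$.

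For the reverse inequality $l(g)\leqslant N(g)+16\delta$, hyperbolicity is essential. The strategy is to find, for each sufficiently large $n$, a point $x_n\in X$ such that
$$d(x_n,g(x_n)) \leqslant \frac{d(o,g^n(o))}{n}+16\delta,$$
and then let $n\to\infty$, using the identity $N(g)=\inf_{n\geqslant 1} d(o,g^n(o))/n$ (a consequence of Fekete's lemma applied to the subadditive sequence $d(o,g^n(o))$). A natural candidate for $x_n$ is a midpoint of a geodesic $[g^{-n}(o),g^n(o)]$, since $g(x_n)$ is then a midpoint of the shifted geodesic $[g^{-n+1}(o),g^{n+1}(o)]$. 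One would estimate the distance between these two midpoints using the $4$-point condition characterising $\delta$-hyperbolicity, combined with the standard fact that midpoints of two geodesics in a $\delta$-hyperbolic space are controlled by the displacements of the endpoints up to an additive error of order $\delta$; the bookkeeping of error terms produces the explicit constant $16\delta$.

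The main obstacle is precisely this bookkeeping: the midpoint-stability lemma is standard, but extracting the sharp constant $16\delta$ requires invoking the thin-triangle inequality several times and carefully exploiting that $g$ shifts \emph{both} endpoints of the long geodesic in the same asymptotic direction, so that the resulting displacement of the midpoint is controlled by $N(g)$ rather than by $d(o,g(o))$. A more conceptual alternative is to split by the classification of isometries: if $g$ is elliptic or parabolic then $N(g)=l(g)=0$ and the inequality is trivial, while if $g$ is loxodromic then any orbit is a $(K,C)$-quasi-geodesic whose constants depend only on $\delta$, and the Morse stability of quasi-geodesics produces a bi-infinite geodesic along which $g$ shifts by exactly $N(g)$ up to an error of $O(\delta)$. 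Any point on this quasi-axis then realises $d(x,g(x))\leqslant N(g)+16\delta$. Either route reduces the proposition to a single clean hyperbolic-geometry estimate, at the cost of carefully tracking additive multiples of $\delta$.
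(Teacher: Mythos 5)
Your lower bound $N(g)\leqslant l(g)$ is correct and complete. The upper bound, however, is not proved: you name the difficulty (``the bookkeeping'') and defer it, and both routes you sketch run into genuine obstructions rather than mere bookkeeping. In the first route, the standard midpoint-stability estimate controls $d(x_n,g(x_n))$ by the displacement of the \emph{endpoints} of $[g^{-n}(o),g^{n}(o)]$, which is $d(o,g(o))+O(\delta)$; nothing in what you wrote produces the crucial factor $1/n$, and producing it is the entire content of the proposition. In the second route, two of the asserted facts are false in general: a parabolic isometry need not satisfy $l(g)=0$ (in a locally finite $\delta$-hyperbolic graph a fixed-point-free isometry moves every vertex by at least $1$, so $l(g)\geqslant 1$ while $N(g)=0$; the bound $l(g)\leqslant 16\delta$ is precisely what has to be proved for such $g$, not something that can be dismissed as trivial), and the orbit $\{g^{n}(o)\}$ of a loxodromic is a quasi-geodesic only with constants depending on $N(g)$ and on $d(o,g(o))$, not on $\delta$ alone --- they degenerate as $N(g)\to 0$ or as $o$ moves away from the axis --- so the Morse lemma cannot deliver an additive error depending only on $\delta$, let alone the specific constant $16\delta$.

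For comparison, the paper itself gives no argument and simply cites \cite[Proposition 6.4]{Quasi-geo}. The classical proof found there is genuinely different from both of your routes: one shows that if $m$ is the midpoint of a geodesic segment $[x,g(x)]$, then $2\,d(m,g(m))\leqslant d(x,g^{2}(x))+c\delta$ for an explicit constant $c$ (a single application of the four-point condition to $x$, $g(x)$, $g^{2}(x)$ and the two midpoints $m$, $g(m)$). Taking the infimum over $x$ gives $2\,l(g)\leqslant l(g^{2})+c\delta$, and iterating along the powers $g^{2^{k}}$ yields $l(g)\leqslant 2^{-k}\,l(g^{2^{k}})+c\delta\leqslant 2^{-k}\,d(o,g^{2^{k}}(o))+c\delta\to N(g)+c\delta$. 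This mechanism --- doubling the power of $g$ rather than averaging over $n$ --- is what converts the endpoint displacement into the stable norm, and it is the ingredient missing from your sketch. If you want to complete your write-up, you should either prove the doubling inequality above or find a substitute for it; as it stands, the hard half of the proposition is asserted rather than established.
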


Recall that an isometry of $X$ extends to a homeomorphism of its Gromov compactification $\overline{X}$. Elements of $G$ fall into one of the three following categories :  

\begin{itemize}
\item We say that $g$ is \textit{elliptic} if $N(g)=0$ and $g$ has a fixed point in $X$.
\item We say that $g$ is \textit{parabolic} if $N(g)=0$ and $g$ is not elliptic. In that case, the extended action of $g$ has a unique fixed point in $\partial X$.
\item We say that $g$ is \textit{hyperbolic} if $N(g)>0$. In that case, the extended action of $g$ has two distinct fixed points in $\partial X$.
\end{itemize}

\subsection{Subgroups of an isometry group}

Let $G=Isom(X)$ be the isometry group of a Gromov-hyperbolic, proper (closed balls are compact) and geodesic metric space $X$.

\begin{defi}\label{qua-con}
We say that $Z\subset X $ is \textit{quasi-convex} if there is $\eta>0$ such that any two points of $Z$ can be connected by a geodesic segment contained in an $\eta$-neighborhood of $Z$.\\
A subgroup $H$ of $G$ is \textit{quasi-convex} if it is discrete and if there is a point $x\in X$ such that the orbit $H(x)$ is quasi-convex.
\end{defi}

Let $G'$ be a semi-simple rank one Lie group and $X'$ is its associated symmetric space (e.g. $G'=SO_0(n,1)$ and $X'=\mathbb{H}^n$), recall that any discrete torsion-free subgroup $H$ of $G'$ yields a manifold $X'\diagup H$ with fundamental group isomorphic to $H$.

\begin{defi}\label{con-coc}
A discrete subgroup $H$ of $G'$ is \textit{convex cocompact} if there is a convex and $H$-invariant subset of $X'$ on which the action of $H$ is cocompact.
\end{defi}

If $H$ is a convex cocompact subgroup of $G'$, then there is a convex and compact submanifold $\Omega$ of $X'\diagup H$ such that the inclusion $\Omega \hookrightarrow X' \diagup H$ is a homotopy equivalence.\\

Actually, the two previous properties are equivalent (see, e.g., \cite[Section 1.8]{Bourdon}) :

\begin{prop}
If $G$ is a semi-simple rank one Lie group, a subgroup of $G$ is convex cocompact if and only if it is quasi-convex. 
\end{prop}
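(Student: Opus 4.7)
The plan is to prove the two implications separately, using the limit set and its convex hull for the harder direction.

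For the forward direction (convex cocompact implies quasi-convex), I would argue directly. Let $C\subset X'$ be the convex $H$-invariant subset on which $H$ acts cocompactly, and fix a base point $x\in C$. By cocompactness there exists $R>0$ such that $C$ is contained in the $R$-neighborhood of the orbit $H(x)$ (simply because a compact fundamental domain has bounded diameter). Now take two points $h_1(x),h_2(x)\in H(x)$. Convexity of $C$ forces any geodesic segment between them to lie in $C$, hence in the $R$-neighborhood of $H(x)$. This is exactly the definition of quasi-convexity with constant $\eta=R$, and discreteness is already part of the hypothesis.

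For the reverse direction, assume $H(x)$ is $\eta$-quasi-convex and discrete. The natural candidate for a convex $H$-invariant set is the closed convex hull $C$ of the limit set $\Lambda(H)\subset\partial X'$; this makes sense because $X'$ is a CAT$(-1)$ space (up to normalization), in which convex hulls of subsets of the boundary are well-defined. Since $\Lambda(H)$ is $H$-invariant, so is $C$. The main task is to show that $C$ lies in a uniform neighborhood of $H(x)$ (and conversely $H(x)$ lies near $C$), which together with properness of $X'$ and discreteness of $H$ yields cocompactness of the action on $C$.

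For the key geometric step, I would use that any point $y\in C$ lies on (or near) a geodesic whose endpoints $\xi,\xi'\in\Lambda(H)$ are limits of sequences $h_n(x), h_n'(x)$ of orbit points. Quasi-convexity tells us the geodesic segment $[h_n(x),h_n'(x)]$ stays in the $\eta$-neighborhood of $H(x)$; by $\delta$-hyperbolicity and stability of quasi-geodesics, these segments converge (on compact sets) to the bi-infinite geodesic $(\xi,\xi')$, which therefore also lies in a $(\eta+c\delta)$-neighborhood of $H(x)$. Since $C$ is a union/Hausdorff-limit of such geodesics, it lies in a uniform neighborhood of $H(x)$ as well. Combined with properness of $X'$ and discreteness of $H$, this gives a compact fundamental domain for $H$ on $C$.

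The main obstacle is the geometric step in the reverse direction: converting the quasi-convexity of the orbit into a uniform bound on the distance from the convex hull of the limit set to the orbit. The argument rests on the stability of quasi-geodesics in Gromov hyperbolic spaces, together with the fact that geodesics between points going to $\xi,\xi'\in\partial X'$ subconverge to the bi-infinite geodesic with those endpoints; controlling the constants uniformly (independent of the chosen boundary points) is where the rank one (or CAT$(-1)$) geometry is really used.
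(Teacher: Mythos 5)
The paper does not actually prove this proposition: it is stated as a known fact with a pointer to Bourdon's thesis (Section 1.8), so there is no in-paper argument to compare against. Your sketch is the standard limit-set/convex-hull proof and is essentially correct. The forward direction is complete as written: $H$-invariance and convexity of $C$ put every geodesic between orbit points inside $C$, and cocompactness puts $C$ in an $R$-neighborhood of the orbit, giving $\eta=R$. The reverse direction is also the right argument, but two points deserve explicit justification rather than a parenthetical. First, the passage from ``$C$ is a union/Hausdorff-limit of such geodesics'' needs the statement that the closed convex hull of $\Lambda(H)$ lies within uniformly bounded Hausdorff distance of the \emph{weak} hull (the union of bi-infinite geodesics with both endpoints in $\Lambda(H)$); this is true in pinched negative curvature, hence in any rank one symmetric space after rescaling to CAT$(-1)$, but it is a genuine lemma, not a tautology --- the convex hull is a priori much larger than the union of those geodesics. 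Once you have it, your limiting argument (segments $[h_n(x),h_n'(x)]$ lie in $N_\eta(H(x))$, subconverge by Arzel\`a--Ascoli to the geodesic $(\xi,\xi')$, which therefore lies in the closed $\eta$-neighborhood) does give $C\subset N_{R}(H(x))$, and then $C=H\cdot\bigl(C\cap \overline{B}(x,R)\bigr)$ with $C\cap\overline{B}(x,R)$ compact by properness, which is cocompactness. Second, you should dispose of the degenerate cases: if $H$ is finite then $\Lambda(H)=\emptyset$ and the hull is empty, so one takes instead a fixed point or the hull of a single orbit; if $\Lambda(H)$ is a single point the construction also degenerates, but an infinite discrete quasi-convex subgroup cannot have a one-point limit set (a parabolic-type orbit fails quasi-convexity), so this case does not occur. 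With those two additions the argument is complete and agrees with the proof the paper delegates to the literature.
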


\subsection{Hyperbolic groups}

Let $\Gamma$ be a finitely generated group and $S$ be a finite set of generators. The \textit{Cayley graph} of $\Gamma$ with respect to $S$, denoted $C_S(\Gamma)$, is the graph whose vertices are labelled by elements of $\Gamma$, and there is an edge between two vertices labelled by $\gamma,\gamma'\in \Gamma$ if there is $s\in S$ such that $\gamma'=s\gamma$.\\
The metric on $C_S(\Gamma)$ obtained by setting each edge to be of length 1, yields a metric on $\Gamma$ called \textit{word metric} and denoted by $d_S$.\\

A group $\Gamma$ is \textit{hyperbolic} if there is a finite generating set $S$ such that $(C_S(\Gamma),d_S)$ is a Gromov-hyperbolic metric space. A hyperbolic group is \textit{non-elementary} if it contains a free subgroup of rank 2.\\

The group $\Gamma$ acts naturally by isometries on $(C_S(\Gamma),d_S)$ by left multiplication on the vertices. Every element $\gamma\in \Gamma$ of infinite order acts as a hyperbolic isometry and every element $\gamma\in \Gamma$ of finite order acts as an elliptic isometry.\\

The \textit{word length of $\gamma\in \Gamma$} is $|\gamma|_S = d_S(e,\gamma)$. We write $l_S(\gamma)$ the translation length of $\gamma\in\Gamma$ for its action on $C_S(\Gamma)$ and one can check that $l_S(\gamma)=\min_{g\in\Gamma} |g\gamma g^{-1}|_S$. 

We will need the following proposition (for a proof, see \cite[Corollary p.224]{Ghys}).

\begin{prop}\label{inf}
If $\Gamma$ is a hyperbolic group and $H$ is a subgroup of $\Gamma$, $H$ is infinite if and only if it contains an infinite order element.
\end{prop}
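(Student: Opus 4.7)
The direction ``contains an infinite-order element $\Rightarrow$ infinite'' is immediate, as such an element generates an infinite cyclic subgroup. For the converse I would exploit the action of $\Gamma$ on its Cayley graph, a proper $\delta$-hyperbolic geodesic metric space. Since $\Gamma$ is finitely generated, closed balls of $C_S(\Gamma)$ are finite, hence the action is proper and $H$ is infinite if and only if the orbit $H\cdot e$ is unbounded. Picking pairwise distinct $h_n\in H$ with $|h_n|_S\to\infty$ and using compactness of $\overline{C_S(\Gamma)}$, I can extract a subsequence so that $h_n(e)\to\xi^+$ and $h_n^{-1}(e)\to\xi^-$ with $\xi^\pm\in\partial C_S(\Gamma)$.

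In the generic case $\xi^+\neq\xi^-$, I would run the classical North--South / ping-pong argument in a $\delta$-hyperbolic space: for $n$ large, $h_n$ sends a suitable neighborhood of $\xi^-$ in $\overline{C_S(\Gamma)}$ strictly inside a small neighborhood of $\xi^+$, so iteration gives that $h_n^k(e)\to\xi^+$ linearly in $k$. This forces $N(h_n)>0$, so $h_n$ is hyperbolic by the trichotomy recalled in Section 2, and in particular has infinite order in $H$.

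The main obstacle is the degenerate case $\xi^+=\xi^-$, in which the sequence only sees a single boundary point and the $h_n$ themselves might all be elliptic. I would resolve it by combining two sequences: if $H\cdot e$ admits a second accumulation point $\eta\neq\xi^+$, then for appropriate $n,m$ a product of the form $h_n (h'_m)^{\pm 1}$ has separated attracting and repelling endpoints, reducing back to the generic case. Otherwise the limit set $\Lambda(H)\subset\partial C_S(\Gamma)$ consists of the single point $\xi^+$, and I would invoke the convergence-group viewpoint recalled in Section 6 to conclude that an infinite subgroup of $\Gamma$ with a one-point limit set on the boundary of the proper hyperbolic space $C_S(\Gamma)$ must already contain a loxodromic (hence infinite-order) element. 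This last step, ruling out an infinite torsion subgroup with degenerate boundary dynamics, is the delicate one and is where the finer structure of hyperbolic groups (beyond the hyperbolicity of the ambient space) enters.
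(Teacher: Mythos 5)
The paper gives no argument of its own for this statement: it is quoted from \cite[Corollary p.224]{Ghys}, so your proposal has to stand on its own. The easy direction and the ``generic'' case of the converse are fine: if $h_n(e)\to\xi^+$ and $h_n^{-1}(e)\to\xi^-$ with $\xi^+\neq\xi^-$, the Gromov products $\langle h_n,h_n^{-1}\rangle_e$ stay bounded while $|h_n|_S\to\infty$, which forces $N(h_n)>0$ for large $n$; this is exactly the mechanism behind Theorem \ref{Tukia} and Lemma \ref{ll1} that the paper recalls in Section 6.

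The genuine gap is the degenerate case. Your fallback claim --- that an infinite subgroup of a convergence group with a one-point limit set must contain a loxodromic element --- is false for convergence groups in general: a one-point limit set is precisely the signature of an infinite parabolic subgroup, and infinite torsion groups can act as parabolics on compacta (and as parabolic subgroups of isometry groups of proper hyperbolic spaces, e.g.\ infinite locally finite groups acting on suitable trees or horoballs). What rules this out inside a hyperbolic group $\Gamma$ is not the convergence property of the boundary action but the finer combinatorial structure of $\Gamma$: every finite subgroup has a quasi-centre and is therefore conjugate into a ball of radius depending only on $\delta_S$, so finite subgroups have uniformly bounded order; an infinite torsion subgroup would be an increasing union of finite subgroups of unbounded order, a contradiction. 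Saying ``an infinite subgroup with one-point limit set contains a loxodromic'' is, for hyperbolic groups, essentially a restatement of the proposition you are proving (such a subgroup simply does not exist), so as written the argument is circular exactly at its hardest point. To close the gap you need either the quasi-centre/bounded-order argument above, or the lemma that every torsion element of $\Gamma$ is conjugate to an element of word length at most $4\delta_S+2$ together with an argument promoting this to subgroups --- which is the route taken in the reference \cite{Ghys} the paper cites. Your reduction ``find a second accumulation point $\eta\neq\xi^+$ and pass to a product'' is a reasonable idea, but it only postpones the problem to the case $\Lambda(H)=\{\xi^+\}$, which is the whole difficulty.
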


\begin{defi}
Let $(X,d)$ and $(Y,d')$ be two metric spaces.

\begin{enumerate}
\item Let $K\geqslant 1$ and $C\geqslant 0$ be two constants. A map $f:X \longrightarrow Y$ is a \textit{($K,C$) quasi-isometric embedding} if for all $x,z\in X$ : 
\begin{center}
$\frac{1}{K}d(x,z)-C \leqslant d'(f(x),f(z)) \leqslant Kd(x,z)+C$
\end{center}

\item A map $f:X \longrightarrow Y$ is a \textit{quasi-isometric embedding} if there are $K\geqslant 1$ and $C\geqslant 0$ such that $f$ is a ($K,C$) quasi-isometric embedding.

\item Let $K\geqslant 1$ and $C\geqslant 0$ be two constants. A map $f:X \longrightarrow Y$ is a \textit{($K,C$) quasi-isometry} if $f$ is a ($K,C$) quasi-isometric embedding and if there is a ($K,C$) quasi-isometric embedding $g:Y \longrightarrow X$ such that $d(g\circ f (x),x) \leqslant C$ and $d'(f\circ g (y),y) \leqslant C$ for all $x\in X$, $y\in Y$.

\item A map $f:X \longrightarrow Y$ is a \textit{quasi-isometry} if there are $K\geqslant 1$ and $C\geqslant 0$ such that $f$ is a ($K,C$) quasi-isometry.

\item A \textit{$(K,C)$ quasi-geodesic} of $X$ is the image of a $(K,C)$ quasi-isometric embedding $f:\mathbb{Z} \longrightarrow X$ or $f:\mathbb{R}\longrightarrow X$.

\end{enumerate}

\end{defi}

If $\gamma\in \Gamma$ has infinite order, a \textit{$(K,C)$ quasi-axis for $\gamma$} is a $(K,C)$ quasi-geodesic of $C_S(\Gamma)$, invariant under the action of $\gamma$ and such that $\gamma$ acts on it by translation of length $l_S(\gamma)$. The endpoints of a quasi-axis of for $\gamma$ are the distinct fixed points in $\partial C_S(\Gamma)$ for the extended action of $\gamma$ on $\overline{C_S(\Gamma)}$, \\

There are uniform constants $(K',C')$ such that every infinite order element of $\Gamma$ has a $(K',C')$ quasi-axis for its action on $C_S(\Gamma)$ (see e.g. \cite[Lemma II.6]{Lee} for a proof of the following result). More precisely :

\begin{lem}\label{l1}
Let $\Gamma$ be a hyperbolic group, $S$ be a finite set of generators, $\delta_S$ be a hyperbolicity constant of $C_S(\Gamma)$ and $\gamma\in \Gamma$ with infinite order.\\
Let $w_\gamma\in C_S(\Gamma)$ be such that $d_S(w_\gamma, \gamma \cdot w_\gamma)=l_S(\gamma)$ and let $l_\gamma:=\bigcup_{n\in\mathbb{Z}}\gamma^n \cdot [w_\gamma,\gamma \cdot w_\gamma]$ where $[w_\gamma,\gamma \cdot w_\gamma]$ denotes any geodesic segment connecting $w_\gamma$ to $\gamma \cdot w_\gamma$ in $C_S(\Gamma)$.\\
There are constants $K'\geqslant 1$ and $C'\geqslant 0$ depending only on $\delta_S$ such that for all $\gamma\in \Gamma$ of infinite order, and for any choice of $w_\gamma\in C_S(\Gamma)$ as above, $l_\gamma$ is a $(K',C')$ quasi-axis for $\gamma$.
\end{lem}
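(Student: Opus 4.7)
The plan is to parametrize $l_\gamma$ by arc length as a map $f : \mathbb{R} \to C_S(\Gamma)$ with $f(n l_S(\gamma)) = \gamma^n \cdot w_\gamma$ and isometric on each piece $\gamma^n \cdot [w_\gamma, \gamma w_\gamma]$, and then show $f$ is a $(K', C')$-quasi-isometric embedding with constants depending only on $\delta_S$. By construction $f(t + l_S(\gamma)) = \gamma \cdot f(t)$, so once the quasi-geodesic property is established, $\gamma$ automatically acts on the image by translation of length $l_S(\gamma)$. The upper bound $d_S(f(s), f(t)) \leq |s-t|$ is immediate from the triangle inequality along the piecewise-geodesic path.

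The content is the lower bound, which I would extract from a local backtracking estimate at each junction. By $\gamma$-invariance it suffices to control a single junction; at $\gamma \cdot w_\gamma$ the Gromov product is
$$\langle w_\gamma, \gamma^2 w_\gamma \rangle_{\gamma w_\gamma} = l_S(\gamma) - \frac{1}{2} d_S(w_\gamma, \gamma^2 w_\gamma),$$
and since $d_S(w_\gamma, \gamma^2 w_\gamma) \geq l(\gamma^2) \geq N(\gamma^2) = 2 N(\gamma)$, Proposition \ref{translation} bounds this product by $l_S(\gamma) - N(\gamma) \leq 16\delta_S$. Hence consecutive segments of $l_\gamma$ fellow-travel for a universally bounded distance near each common vertex, independently of $\gamma$.

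With a uniform overlap bound at every junction, I would invoke the standard local-to-global principle for quasi-geodesics in $\delta_S$-hyperbolic spaces: a bi-infinite concatenation of geodesic segments whose local overlaps near each junction are bounded by a fixed multiple of $\delta_S$ is globally a quasi-geodesic with constants depending only on $\delta_S$. This produces the missing lower bound $d_S(f(s), f(t)) \geq (|s-t| - C')/K'$. Because $l_\gamma$ is bi-infinite, $\gamma$-invariant and a quasi-geodesic, its two limit points in $\partial C_S(\Gamma)$ must coincide with the attracting and repelling fixed points of $\gamma$, so it is a genuine quasi-axis.

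The main obstacle is the local-to-global step in the regime where $l_S(\gamma)$ is small compared to $\delta_S$: the clean formulations require the segment length to be large relative to both $\delta_S$ and the overlap. For infinite-order elements in a Cayley graph one has the a priori integrality $l_S(\gamma) \geq 1$, which handles most cases; if it is insufficient I would pass to a sufficiently high power $\gamma^k$ (for which $l_S(\gamma^k) \geq k \cdot N(\gamma)$ can be made arbitrarily large, so the classical argument applies directly), build its quasi-axis by the same Gromov-product computation, and then transfer back to $\gamma$ using that the $\gamma$-orbit of $w_\gamma$ stays uniformly close to the $\gamma^k$-orbit produced.
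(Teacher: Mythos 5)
First, note that the paper does not actually prove Lemma \ref{l1}: it is quoted from Lee's thesis (Lemma II.6), so there is no in-paper argument to compare against; your proposal has to stand on its own.

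Your computation $\langle w_\gamma,\gamma^2 w_\gamma\rangle_{\gamma w_\gamma}=l_S(\gamma)-\tfrac12 d_S(w_\gamma,\gamma^2 w_\gamma)\leqslant l_S(\gamma)-N(\gamma)\leqslant 16\delta_S$ is correct and, combined with a local-to-global statement such as Proposition \ref{local}, it does settle the case where $l_S(\gamma)$ is large compared to $\delta_S$ (say $l_S(\gamma)\geqslant 1000\delta_S$), with constants depending only on $\delta_S$. The gap is the complementary regime, and your two proposed fixes do not close it. The integrality bound $l_S(\gamma)\geqslant 1$ is useless unless $\delta_S$ is itself smaller than a universal constant: for a fixed hyperbolic group with large $\delta_S$ there are typically infinite-order elements with $l_S(\gamma)$ of order $1$, and for these the overlap bound $16\delta_S$ exceeds the segment length, so the concatenation criterion says nothing (a concatenation of short geodesics with bounded Gromov products at the junctions need not be a quasi-geodesic). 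The power trick then runs into a quantitative obstruction: to choose $k$ with $l_S(\gamma^k)\geqslant kN(\gamma)$ large you only need $N(\gamma)>0$, but to ``transfer back'' you must show that the parametrization of $l_\gamma$, which advances by $l_S(\gamma)$ per period while the orbit actually progresses at asymptotic speed $N(\gamma)$ (indeed $d_S(w_\gamma,\gamma^n w_\gamma)/n\to N(\gamma)$), is a quasi-isometric embedding with uniform constants. This forces $l_S(\gamma)/N(\gamma)$ to be bounded, i.e.\ by Proposition \ref{translation} it forces a uniform positive lower bound on $N(\gamma)$ over all infinite-order $\gamma$. That lower bound is precisely the nontrivial ingredient your argument is missing: it is a theorem in its own right (uniform positivity of stable translation lengths of infinite-order elements in a hyperbolic group, usually proved via a counting argument in a ball of radius comparable to $\delta_S$, so the constant in fact also depends on $|S|$), and neither the Gromov-product estimate nor passing to powers produces it. To complete the proof you must either quote that theorem explicitly or prove it; once you have $N(\gamma)\geqslant\lambda>0$ uniformly, both your junction estimate and the transfer from $\gamma^k$ back to $\gamma$ go through.
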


\begin{rems}
Let $\Gamma$ be a hyperbolic group and $K',C'$ the uniform constants given by the previous lemma.\\
If $\gamma$ has minimal word length in its conjugacy class, then $\bigcup_{n\in \mathbb{Z}}\{\gamma^n\}$ is a $(K',C')$ quasi-axis for $\gamma$.\\
If $g,h\in \Gamma$ and $l_g$ is a $(K',C')$ quasi-axis for $g$, then $h \cdot l_g$ is a $(K',C')$ quasi-axis for $hgh^{-1}$. 
\end{rems}

\section{Space of representations and open sets of discontinuity}

Let $X$ be a Gromov-hyperbolic, proper and geodesic metric space and let $G=Isom(X)$ be its isometry group endowed with the compact-open topology. Let $\Gamma$ be a finitely generated group and let $S$ be a finite set of generators.\\

In this section, we define the space $X(\Gamma,G)$ of conjugacy classes of representations of $\Gamma$ in $G$. The group $Out(\Gamma)$ of outer automorphism of $\Gamma$ acts naturally on $X(\Gamma,G)$ by precomposition. We define $Out(\Gamma)$-invariant subsets of $X(\Gamma,G)$ and look at the dynamical properties of the action on those invariant subsets.

\subsection{Group action on the space of representations}

The \textit{space of representations}, $R(\Gamma,G)$ is the set of homomorphisms $\rho:\Gamma\longrightarrow G$. We identify $R(\Gamma,G)$ with a subset of $G^{|S|}$, which allows us to endow $R(\Gamma,G)$ with the product topology. This topology does not depend on the choice of a finite generating set $S$.\\
The automorphism groups of $G$ and of $\Gamma$ act on $R(\Gamma,G)$ in the following way : if $\rho\in R(\Gamma,G)$, $\psi\in Aut(G)$ and $\varphi\in Aut(\Gamma)$,
\begin{center}
$\psi \cdot \rho := \psi \circ \rho \qquad $ and $ \qquad \varphi \cdot \rho := \rho \circ \varphi^{-1}$
\end{center}
For any group $G$, $Inn(G)< Aut(G)$ will denote the normal subgroup of inner automorphisms of $G$ and  $Out(G)=Aut(G)\diagup Inn(G)$ the quotient group of outer automorphisms.\ \
We denote by $X(\Gamma,G)=R(\Gamma,G)\diagup Inn(G)$ the quotient space made of conjugacy classes of representations, and endow it with quotient topology. We mention that this topology may not be Hausdorff (may not even be a $T1$-space), however, this fact will not add any difficulty in what follows.
The group $Inn(\Gamma)$ acts trivially on $X(\Gamma,G)$ and this induces an action of $Out(\Gamma)$ on $X(\Gamma,G)$.\\

We denote by $AH(\Gamma,G)$ the set of conjugacy classes of representations whose image is discrete in $G$ and whose kernel is finite. Remark that if $\Gamma$ is torsion-free, $[\rho]\in AH(\Gamma,G)$ if and only if it has a representative which is injective and has discrete image in $G$.\\

Using Definitions \ref{con-coc} and \ref{qua-con}, we define the following subsets of $AH(\Gamma,G)$ :

\begin{defi}
We say that $\rho\in R(\Gamma,G)$ is \textit{quasi-convex} if $\rho$ has finite kernel and its image is a quasi-convex subgroup of $G$.\\
If $G$ is a semi-simple rank one Lie group, we say that $\rho\in R(\Gamma,G)$ is \textit{convex cocompact} if $\rho$ has finite kernel and its image is a convex cocompact subgroup of $G$.\\
\end{defi}

We denote by $QC(\Gamma,G)$ (resp. $CC(\Gamma,G)$) the set of conjugacy classes of quasi-convex representations (resp. the set of conjugacy classes of convex cocompact representations whenever $G$ is a semi-simple rank one Lie group).\\

One can check that $AH(\Gamma,G)$, $QC(\Gamma,G)$ and $CC(\Gamma,G)$ are invariant under the action of the outer automorphism group of $\Gamma$.

\subsection{Orbit maps and quasi-isometric embeddings}

Let $\rho\in R(\Gamma,G)$ and $x\in X$ be a base point.\\

An \textit{orbit map} $\tau_{\rho,x}:C_S(\Gamma) \longrightarrow X$ is a $\rho$-equivariant map defined by $\tau_{\rho,x}(\gamma)= \rho(\gamma)(x)$ for all $\gamma\in \Gamma$, and sends each edge $[e,s]$ of $C_S(\Gamma)$ on a geodesic segment of $X$ connecting $x$ to $s \cdot x$ for all $s\in S$.

\begin{rem}\label{lipschitz}
Let $\rho \in R(\Gamma,G)$, $\tau_{\rho,x}$ be an orbit map, and $\kappa:=\max_{s\in S}\{d(x, \tau_{\rho,x}(s))\}$.\\
Note that $\tau_{\rho,x}$ is $\kappa$-Lipschitz.
\end{rem}

Next proposition gives a characterization of quasi-convex representations via the orbit map. A proof can be found in \cite[Corollary 1.8.4]{Bourdon}.

\begin{prop}\label{qc}
A representation $\rho\in R(\Gamma,G)$ is quasi-convex if and only if there are a point $x\in X$ and a finite set $S$ of generators for $\Gamma$ such that there is an orbit map $\tau_{\rho,x}:C_S(\Gamma) \longrightarrow X$ which is a quasi-isometric embedding.
\end{prop}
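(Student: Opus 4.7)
Since the orbit map is always $\kappa$-Lipschitz by Remark \ref{lipschitz}, only the lower bound of the quasi-isometry inequality is nontrivial in either direction, and it is the forward direction that is the more delicate one.

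\textbf{Forward direction.} Assume $\rho$ is quasi-convex, so $\rho(\Gamma)$ is discrete in $G$ and there exist $x \in X$ and $\eta > 0$ such that $Y := \rho(\Gamma)(x)$ is $\eta$-quasi-convex. Since $X$ is proper, discreteness of $\rho(\Gamma)$ in the compact-open topology forces the action on $X$ to be properly discontinuous; combined with the finiteness of $\ker \rho$, this shows that the set
$$F := \{ \gamma \in \Gamma : d(x, \rho(\gamma) x) \leqslant 3\eta \}$$
is finite. I would enlarge $S$ to contain $F$; this is legitimate because any two finite generating sets of $\Gamma$ yield bi-Lipschitz word metrics, so the quasi-isometric-embedding property of $\tau_{\rho,x}$ does not depend on $S$. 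Given $\gamma \in \Gamma$, choose a geodesic $c$ in $X$ from $x$ to $\rho(\gamma) x$ and subdivide it by points $c_0 = x, c_1, \dots, c_k = \rho(\gamma) x$ at distance $\leqslant \eta$ apart, with $k \leqslant d(x, \rho(\gamma) x)/\eta + 1$. Quasi-convexity of $Y$ gives elements $\gamma_0 = e, \gamma_1, \dots, \gamma_k = \gamma$ with $d(c_i, \rho(\gamma_i)x) \leqslant \eta$, whence $d(\rho(\gamma_i)x, \rho(\gamma_{i+1})x) \leqslant 3\eta$, i.e. $\gamma_{i+1}\gamma_i^{-1} \in F \subseteq S$. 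Thus $|\gamma|_S \leqslant k$, giving the desired lower bound, and $\rho$-equivariance extends the bound from vertices to arbitrary points of $C_S(\Gamma)$.

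\textbf{Reverse direction.} Assume $\tau_{\rho,x}$ is a $(K,C)$-quasi-isometric embedding for some $x$ and $S$. Finiteness of $\ker \rho$ is immediate: any $\gamma \in \ker \rho$ satisfies $d(x, \rho(\gamma) x) = 0$, so $|\gamma|_S \leqslant KC$, and only finitely many elements of $\Gamma$ satisfy this. Discreteness of $\rho(\Gamma)$ follows similarly: any neighborhood $U$ of the identity in the compact-open topology contains only isometries moving $x$ by at most some $R$, so $\rho^{-1}(U) \subseteq \{\gamma : |\gamma|_S \leqslant K(R+C)\}$ is finite, hence so is $\rho(\Gamma) \cap U$. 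For quasi-convexity of $Y$, given $y_1,y_2 \in Y$ with preimages $\gamma_1,\gamma_2$, the orbit map sends a word geodesic from $\gamma_1$ to $\gamma_2$ to a $(K,C)$-quasi-geodesic of $X$ whose vertex images lie in $Y$ and whose edge images sit in a $\kappa$-neighborhood of $Y$; by the Morse lemma in the Gromov-hyperbolic space $X$, any genuine geodesic from $y_1$ to $y_2$ lies in a uniform neighborhood of this quasi-geodesic, and thus in a uniform neighborhood of $Y$.

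\textbf{Main obstacle.} The forward direction is the pivotal step: it requires simultaneously exploiting $\eta$-quasi-convexity of the orbit, properness of $X$, and finiteness of $\ker \rho$ to produce a finite ``step set'' $F$, and then justifying that one may freely replace the generating set by one containing $F$. Once this is set up, the subdivision argument is purely combinatorial. The reverse direction is comparatively routine: it reduces to the Morse lemma for quasi-geodesics in hyperbolic spaces, after the easy word-length estimates dispose of finiteness of the kernel and discreteness of the image.
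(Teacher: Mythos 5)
Your proof is correct and is exactly the standard argument; the paper itself does not prove this proposition but outsources it to Bourdon \cite[Corollary 1.8.4]{Bourdon}, whose proof follows the same lines (subdivide a geodesic between orbit points, use properness and discreteness to get a finite step set, enlarge the generating set; Morse lemma for the converse). One small point: with the paper's definition of quasi-convexity only \emph{some} geodesic from $x$ to $\rho(\gamma)x$ is guaranteed to lie in the $\eta$-neighbourhood of $Y$, so you should either take $c$ to be that geodesic or note that by $\delta$-hyperbolicity any geodesic with the same endpoints works after replacing $\eta$ by $\eta+2\delta$.
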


\begin{cor}
If $QC(\Gamma,G)\ne \emptyset$, then $\Gamma$ is a hyperbolic group.
\end{cor}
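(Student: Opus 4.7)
The plan is to reduce the corollary to the standard fact that \emph{hyperbolicity of a geodesic metric space is inherited from its quasi-isometrically embedded images in a hyperbolic geodesic target}. By definition, $\Gamma$ is a hyperbolic group as soon as some Cayley graph $C_S(\Gamma)$ is a Gromov-hyperbolic metric space. So assume $[\rho]\in QC(\Gamma,G)$; Proposition \ref{qc} supplies a finite generating set $S$, a basepoint $x\in X$, and an orbit map $\tau_{\rho,x}:C_S(\Gamma)\longrightarrow X$ that is a $(K,C)$ quasi-isometric embedding. Since $X$ is Gromov-hyperbolic, geodesic and proper, and $C_S(\Gamma)$ is itself a geodesic metric space (edges glued as unit intervals), the corollary reduces to showing that a geodesic source of a quasi-isometric embedding into a hyperbolic geodesic space is hyperbolic.

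To show $C_S(\Gamma)$ is hyperbolic I would use the equivalent thin-triangle criterion. Pick a geodesic triangle $T=[a,b]\cup[b,c]\cup[a,c]$ in $C_S(\Gamma)$. Each side is mapped by $\tau_{\rho,x}$ to a $(K',C')$ quasi-geodesic of $X$, where $K',C'$ depend only on $K$, $C$ and on the Lipschitz constant $\kappa$ of $\tau_{\rho,x}$ from Remark \ref{lipschitz}. By the Morse lemma (stability of quasi-geodesics) in the $\delta$-hyperbolic space $X$, each of these three quasi-geodesics lies within an $R$-neighborhood of an actual geodesic of $X$, for some constant $R=R(\delta,K',C')$. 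The resulting honest geodesic triangle in $X$ is $\delta$-thin, so the image of $T$ is $(R+\delta)$-thin in $X$.

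To push thinness back to $C_S(\Gamma)$, take any point $p$ on $[a,b]$. Its image $\tau_{\rho,x}(p)$ is within distance $R+\delta$ of the image of some point $q$ lying on $[a,c]\cup[b,c]$, up to replacing $q$ by the nearest vertex of $C_S(\Gamma)$ on that side at an additional cost $\kappa$ (using Remark \ref{lipschitz}). The lower bound $\frac{1}{K}d_S(p,q)-C\le d(\tau_{\rho,x}(p),\tau_{\rho,x}(q))$ then forces $d_S(p,q)\le K(R+\delta+\kappa+C)$. Hence every geodesic triangle in $C_S(\Gamma)$ is uniformly thin, which gives Gromov-hyperbolicity of $C_S(\Gamma)$ and thus of $\Gamma$.

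The main obstacle is purely bookkeeping: there is no new idea beyond the Morse lemma and the quasi-isometric embedding inequalities, but one must carefully locate the comparison point $q$ on an actual side of $T$ rather than merely in the image of $C_S(\Gamma)$, which is why the Lipschitz property of $\tau_{\rho,x}$ from Remark \ref{lipschitz} is needed. Everything else is standard, and a direct citation of ``a quasi-isometric embedding of a geodesic space into a hyperbolic geodesic space has hyperbolic source'' would allow the corollary to be concluded in one line.
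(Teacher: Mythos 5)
Your argument is correct, but it takes a genuinely different route from the paper. You start from Proposition \ref{qc} (the orbit map of a quasi-convex representation is a quasi-isometric embedding) and then invoke the standard fact that a geodesic space admitting a quasi-isometric embedding into a hyperbolic geodesic space is itself hyperbolic, proved via the Morse lemma and thin triangles; this is the quasi-isometry-invariance argument, and your bookkeeping (restrict the embedding to each side of a triangle, compare with honest geodesics in $X$, pull thinness back through the lower quasi-isometry inequality together with the Lipschitz bound of Remark \ref{lipschitz}) is sound, up to the harmless fact that the resulting thinness constant is more like $2R+\delta$ than $R+\delta$. The paper instead works directly with the quasi-convexity of the orbit: it observes that $\Gamma$ acts properly discontinuously and cocompactly on the convex hull of $\Or(\Gamma)$, which is Gromov-hyperbolic as a convex subset of $X$ and lies at bounded Hausdorff distance from the orbit, and then concludes by the \v{S}varc--Milnor lemma. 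The paper's route is shorter and bypasses Proposition \ref{qc} entirely, at the price of citing \v{S}varc--Milnor and the heredity of hyperbolicity under passing to convex subsets; yours is more self-contained at the level of estimates and, as you note, could be compressed to a one-line citation of the quasi-isometry invariance of hyperbolicity among geodesic spaces, but it does lean on the nontrivial characterization from Bourdon as its starting point.
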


\begin{proof}
If $\rho\in QC(\Gamma,G)$, then $\Gamma$ acts by isometries, cocompactly and properly discontinuously on the convex hull of $\Or(\Gamma)$ which is Gromov-hyperbolic. Indeed, a convex subset of a Gromov-hyperbolic metric space is also Gromov-hyperbolic, and the action of $\Gamma$ on $\Or(\Gamma)$ being cocompact, its action on the convex hull of $\Or(\Gamma)$, which is at bounded Hausdorff distance from $\Or(\Gamma)$, is also cocompact. \v{S}varc-Milnor lemma implies that $\Gamma$ is a hyperbolic group.
\end{proof}

\subsection{Well-displacing representations and open sets of discontinuity}

Recall that we denote by $l_S(\gamma)$ the translation length of $\gamma$ for its action on $C_S(\Gamma)$ and $l(\rho(\gamma))$ the translation length of $\rho(\gamma)$ for its action on $X$. 

We say that $\rho\in R(\Gamma,G)$ is \textit{well-displacing} if there are $J\geqslant 1$ and $B\geqslant 0$ such that :
$$\frac{1}{J}l_S(\gamma)-B \leqslant l(\rho(\gamma)) \leqslant Jl_S(\gamma)+B ~~~~ \forall \gamma\in \Gamma.$$

This property is studied in \cite[Sections 3 and 4]{Well-displacing} where the authors showed that :

\begin{prop}\label{wd}
Let $\Gamma$ be a hyperbolic group and $\rho\in R(\Gamma,G)$. A representation $\rho$ is quasi-convex if and only if it is well-displacing.
\end{prop}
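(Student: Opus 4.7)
The statement is an equivalence; the forward direction is a short calculation and the converse is the substantive content, carried out in \cite[Sections 3 and 4]{Well-displacing}. I would treat the two in turn.

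For the forward direction, assuming $\rho$ is quasi-convex, Proposition \ref{qc} gives a basepoint $x \in X$ and a generating set $S$ making the orbit map $\Or : C_S(\Gamma) \to X$ a $(K, C)$ quasi-isometric embedding. The plan is to apply this QI inequality to the pair $(e, \gamma^n)$, divide by $n$, and pass to the limit to get a two-sided comparison between the stable norms $N(\rho(\gamma))$ and $N_S(\gamma)$; then invoke Proposition \ref{translation} in both $X$ and $C_S(\Gamma)$ to replace stable norms by translation lengths at the cost of an additive constant of size $16\delta_X + 16\delta_S/K$. This gives well-displacing with $J = K$.

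For the converse direction, the plan has two stages. The first is to show that $\rho$ has finite kernel and discrete image. Finiteness of the kernel is immediate from well-displacing and Proposition \ref{inf}: any infinite-order $\gamma \in \ker\rho$ would make $l_S(\gamma^n) \to \infty$ while $l(\rho(\gamma^n)) = 0$, violating the lower bound, so $\ker\rho$ is pure torsion and hence finite. Discreteness follows similarly: a sequence $\rho(\gamma_n) \to \mathrm{id}$ of distinct elements forces $l(\rho(\gamma_n)) \leq d(x, \rho(\gamma_n)(x)) \to 0$, hence $l_S(\gamma_n)$ bounded by well-displacing, hence the $\gamma_n$ lie in finitely many conjugacy classes of $\Gamma$; within any class containing an infinite-order representative (possibly after passing to a power) well-displacing forces $l(\rho(\gamma_n))$ bounded below, a contradiction.

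The second stage, which is the main obstacle, is to upgrade translation-length control into distance control for the orbit map, i.e.\ to produce a linear lower bound on $d(x, \Or(\gamma))$ in terms of $|\gamma|_S$ (the Lipschitz upper bound is free from Remark \ref{lipschitz}). My plan here is to exploit the uniform quasi-axis $l_\gamma$ given by Lemma \ref{l1}: its image under $\Or$ is coarsely $\rho(\gamma)$-invariant, and $\rho(\gamma)$ translates along it by approximately $l(\rho(\gamma)) \geq \tfrac{1}{J} l_S(\gamma) - B$. A projection argument in the Gromov-hyperbolic target $X$, combined with a cyclic-reduction step in $\Gamma$ relating $|\gamma|_S$ to $l_S(\gamma)$ within its conjugacy class, then converts this into the desired distance estimate. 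The delicate point is making the constants uniform across all $\gamma$, which \cite{Well-displacing} handles via a Bestvina--Paulin-type compactness argument on rescaled orbit maps to rule out degenerate limits.
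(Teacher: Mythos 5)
Your forward direction is correct and is essentially the paper's (Proposition \ref{displacing} specialized to $A=\Gamma$): compare stable norms via the orbit map and the pair $(e,\gamma^n)$, then pass to translation lengths with Proposition \ref{translation}. Your stage establishing finite kernel and discreteness, while roughly workable, is not needed once the orbit map is shown to be a quasi-isometric embedding, since Proposition \ref{qc} then gives quasi-convexity directly.

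The genuine gap is in your main step for the converse. You propose to lower-bound $d(x,\Or(\gamma))$ by projecting onto the quasi-axis $l_\gamma$ and using $l(\rho(\gamma))\geqslant \tfrac{1}{J}l_S(\gamma)-B$ together with a cyclic reduction $\gamma=h\gamma_0h^{-1}$. But the discrepancy between $|\gamma|_S$ and $l_S(\gamma)$ is governed by $d_S(e,l_\gamma)\approx |h|_S$, and the projection argument in $X$ reduces you to showing $d\bigl(x,\rho(h)(\text{axis of }\rho(\gamma_0))\bigr)\gtrsim |h|_S$ --- which requires the very lower bound on the orbit map you are trying to prove. Well-displacing gives you no control on $d(x,\rho(\gamma)(x))$ for elements whose axes are far from the basepoint, and no amount of projecting fixes this circularity. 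The paper (following Delzant--Guichard--Labourie--Mozes) breaks it with the ping-pong inequality of Lemma \ref{L2} and Proposition \ref{P1}: for a fixed ping-pong pair $(u,v)$ one has
$$|\gamma|_S\leqslant 3\max\{l_S(\gamma),\,l_S(u\gamma),\,l_S(v\gamma)\}+\alpha \qquad \forall \gamma\in\Gamma,$$
so that applying the well-displacing hypothesis to the three elements $\gamma$, $u\gamma$, $v\gamma$ and the triangle inequality $d(x,\rho(\gamma)(x))\geqslant d(x,\rho(u\gamma)(x))-d(x,\rho(u)(x))$ yields $d(x,\rho(\gamma)(x))\geqslant \tfrac{1}{3J}|\gamma|_S-\text{const}$ uniformly, with explicit constants. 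No Bestvina--Paulin-type compactness argument on rescaled orbit maps is used (or needed) here; the uniformity comes for free from the single fixed pair $(u,v)$. Without this (or an equivalent device), your outline does not close.
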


We will sketch the proof of this proposition in greater generality in Section 4.5.\\

Let $H$ be a group acting continuously on a topological space $X$ and $Y\subset X$ be invariant by the action of $H$. Recall that $H$ acts \textit{properly discontinuoulsy} on $Y$ if for every compact set $R\subset Y$, the set\\ $\{h\in H ~|~ h(R)\cap R \ne \emptyset\}$ is finite.
We say that $Y$ is an \textit{open set of discontinuity} if $Y$ is open, $H$-invariant, and that the action of $H$ on $Y$ is properly discontinuous.

The next proposition (see Section 6.1 for a proof) can be seen as a special case of the work done in the following : 

\begin{prop}\label{QC}
Let $\Gamma$ be a hyperbolic group. The set $QC(\Gamma,G)$ is an open set of discontinuity for the action of $Out(\Gamma)$ on $X(\Gamma,G)$.
\end{prop}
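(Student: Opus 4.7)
The plan is to establish three properties of $QC(\Gamma, G)$: invariance under $Out(\Gamma)$, openness in $X(\Gamma,G)$, and proper discontinuity of the induced action. Invariance is immediate, since for $\phi \in Aut(\Gamma)$ the representation $\rho \circ \phi^{-1}$ has the same image in $G$ as $\rho$, and quasi-convexity only depends on the image subgroup.

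For openness, I would appeal to Proposition \ref{qc}. Given $[\rho_0] \in QC(\Gamma, G)$ with a $(K, C)$-quasi-isometric orbit map $\tau_{\rho_0, x}$, observe that for $\rho$ close to $\rho_0$ in $R(\Gamma, G)$, each point $\rho(s)(x)$ lies close to $\rho_0(s)(x)$, so $\tau_{\rho, x}$ and $\tau_{\rho_0, x}$ agree up to a small error on any prescribed Cayley-ball around $e$. The image under $\tau_{\rho, x}$ of a long geodesic segment of $C_S(\Gamma)$ is therefore locally a quasi-geodesic in $X$ with constants close to $(K, C)$, and the local-to-global principle for quasi-geodesics in a proper $\delta$-hyperbolic geodesic space promotes this to a global $(K', C')$-quasi-isometric embedding with only mildly worsened constants. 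Hence $[\rho] \in QC(\Gamma, G)$.

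For proper discontinuity, fix a compact subset $R \subset QC(\Gamma, G)$. The openness argument produces, in a neighborhood of each $[\rho] \in R$, uniform quasi-isometric constants for orbit maps; a finite subcover of $R$ then yields constants $(K, C)$ that work simultaneously for every class in $R$. Combined with Proposition \ref{wd}, this gives constants $(J, B)$ such that every $[\rho] \in R$ is $(J, B)$-well-displacing. Assume for contradiction that there exist distinct $\phi_n \in Out(\Gamma)$ and classes $[\rho_n] \in R$ with $\phi_n \cdot [\rho_n] = [\rho_n \circ \phi_n^{-1}] \in R$. Applying the well-displacing bound to both $\rho_n$ and $\rho_n \circ \phi_n^{-1}$, and substituting $\gamma \mapsto \phi_n(\gamma)$ in the latter, one eliminates $l(\rho_n(\gamma))$ to obtain
\[
l_S(\phi_n(\gamma)) \leq J^2\, l_S(\gamma) + 2JB \qquad \forall\, \gamma \in \Gamma.
\]
Specializing to $\gamma \in \B_S$, the right-hand side is bounded by a constant $M$ independent of $n$, and Theorem \ref{Can} forces the collection $\{\phi_n\}$ to be finite, contradicting the assumption.

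The main technical obstacle is the passage from pointwise (per representation) quasi-isometric constants to uniform ones on the compact set $R$; once this is secured, the proof reduces to a clean application of Theorem \ref{Can} to the set $\B_S$ of generators and products of distinct generators.
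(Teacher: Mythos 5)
Your proof is correct and follows essentially the same route as the paper, which obtains Proposition \ref{QC} by specializing its general $A$-stable machinery to $A=\Gamma$: openness via the local-to-global principle for quasi-geodesics (Proposition \ref{local}, used in Proposition \ref{ouvert}), uniform well-displacing constants on compact sets via a finite subcover (Corollary \ref{displacing compact}), and proper discontinuity from the fact that $\B_S$ is a test subset (Theorem \ref{test}, applied through Theorem \ref{auto-test}). You have simply inlined these three ingredients for the case $A=\Gamma$, working directly with the orbit map via Proposition \ref{qc} instead of with the geodesics in $L_S(\Gamma)$.
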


When $G=Isom^+(\mathbb{H}^3) \cong PSL(2,\mathbb{C})\cong SO_0(3,1)$, Sullivan (see \cite[Theorem A, Section 7]{Sullivan}) proved that the set of conjugacy classes of quasi-convex (i.e. convex cocompact) representations of a torsion-free hyperbolic group is precisely the interior of the set of conjugacy classes of discrete and faithful representations. We generalize this result by removing the torsion-free assumption, for this purpose, we will need the following lemmas :

\begin{lem}\label{Peter}
If $\Gamma$ is a finitely generated group such that $AH(\Gamma,PSL(2,\mathbb{C}))\ne \emptyset$, then $\Gamma$ contains a torsion-free subgroup of finite index.
\end{lem}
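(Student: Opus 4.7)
The plan is to invoke Selberg's lemma for the image $\rho(\Gamma)$ inside the linear group $PSL_2(\mathbb{C})$, and then to absorb the finite kernel $K=\ker\rho$ into a residual-finiteness argument. Fix $\rho\in AH(\Gamma,PSL_2(\mathbb{C}))$ and write $K$ for its (finite) kernel. Since $\Gamma$ is finitely generated, $\rho(\Gamma)$ is a finitely generated subgroup of the linear group $PSL_2(\mathbb{C})$, so Selberg's lemma provides a torsion-free normal finite-index subgroup $H'\triangleleft\rho(\Gamma)$.

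Setting $H:=\rho^{-1}(H')$ gives a finite-index normal subgroup of $\Gamma$. The short exact sequence $1\to K\to H\to H'\to 1$ together with the torsion-freeness of $H'$ forces every torsion element of $H$ to lie in $K$. Hence the problem reduces to finding a finite-index subgroup $N\leqslant H$ with $N\cap K=\{1\}$: such an $N$ is torsion-free and has finite index in $\Gamma$.

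To build such an $N$ it suffices, for each $k\in K\setminus\{1\}$, to produce a finite-index normal subgroup $N_k\triangleleft H$ not containing $k$ and then take $N=\bigcap_k N_k$. I would first replace $H$ by the centralizer $C_H(K)$, which is still of finite index in $\Gamma$ since $\mathrm{Aut}(K)$ is finite, so that $K$ becomes central in $H$. Then $H$ is a central extension of the finitely generated linear group $H/K$ (residually finite by Mal'cev) by the finite group $K$. Appealing to the fact that such extensions remain linear --- e.g.\ by combining the given representation $\rho$ with a finite-dimensional representation of $H$ restricting faithfully to $K$, built from a faithful representation of $K$ via Clifford-type induction --- one obtains residual finiteness of $H$, from which the $N_k$ immediately follow.

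The main obstacle is constructing finite-dimensional representations of $H$ that are faithful on $K$: the representation $\rho$ is blind to $K$, so additional representations must be manufactured in order to separate elements of $K$ in finite quotients of $H$, and this is precisely where the finiteness of $K$ is used in an essential way. Once this linearity (or at least residual-finiteness) step is in place, the assembly of $N$ and the verification that it is torsion-free are routine.
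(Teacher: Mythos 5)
Your opening moves match the paper's (Selberg's lemma applied to the finitely generated linear group $\rho(\Gamma)$, then the reduction to splitting off the finite kernel $K=\ker\rho$ inside the finite-index subgroup $H=\rho^{-1}(H')$, whose torsion indeed lies in $K$). The gap is the step you yourself flag as the main obstacle: the claim that the central extension $1\to K\to H\to H'\to 1$ of a finitely generated linear group by a finite group is again linear, or at least residually finite. This is false in general. By Deligne's theorem on non-residually-finite central extensions of arithmetic groups, the quotient of the preimage of $Sp_{2n}(\mathbb{Z})$ ($n\geqslant 2$) in the universal cover of $Sp_{2n}(\mathbb{R})$ by twice its center is a central extension of the linear, residually finite group $Sp_{2n}(\mathbb{Z})$ by $\mathbb{Z}/2\mathbb{Z}$ in which the central involution lies in \emph{every} finite-index subgroup; that group is not residually finite and not even virtually torsion-free. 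Moreover, producing a finite-dimensional representation of $H$ faithful on $K$ is essentially equivalent to the statement you are proving (its direct sum with $\rho$ would be a faithful linear representation of $H$, and Selberg then finishes), and ``Clifford-type induction'' from $K$, which has infinite index in $H$, does not yield one: the obstruction is exactly the class of the extension in $H^2(H';K)$, which need not vanish when $H'$ is merely linear. (A smaller point: if $K$ is nonabelian, passing to $C_H(K)$ does not make $K$ central; it only reduces the problem to the central subgroup $Z(K)=K\cap C_H(K)$, which is harmless but should be said.)

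The upshot is that any correct proof must use that $\rho(\Gamma)$ is a \emph{discrete} subgroup of $PSL_2(\mathbb{C})$, not just a linear group, and this is where the paper's argument goes in a genuinely different and much heavier direction: after Selberg, the torsion-free finite-index subgroup $H_0<\rho(\Gamma)$ is the fundamental group of a hyperbolic $3$-manifold, which may be taken compact by the tameness theorem (Agol, Calegari--Gabai); the quasi-isometric rigidity theorem of Ha\"issinsky--Lecuire then upgrades the quasi-isometry $\Gamma\to\rho(\Gamma)$ (a surjection of finitely generated groups with finite kernel) to a commensuration, i.e.\ isomorphic finite-index subgroups $Q<\Gamma$ and $H<\rho(\Gamma)$, and pulling $H\cap H_0$ back through this isomorphism gives the desired torsion-free finite-index subgroup of $\Gamma$. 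Some input of this kind, special to $3$-manifold groups (or at least a vanishing statement for the relevant extension class over a finite-index subgroup), is unavoidable; your proposal does not supply it.
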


\begin{proof}
Let $\rho\in AH(\Gamma,PSL(2,\mathbb{C}))$ a representation of $\Gamma$ in $PSL(2,\mathbb{C})$ with finite kernel and discrete image. The group $\rho(\Gamma)$ is a finitely generated linear group hence contains a torsion-free subgroup of finite index according to Selberg's lemma (see \cite{Selberg}), that we will denote by $H_0$. The group $H_0$ is isomorphic to the fundamental group of a 3-manifold that we can suppose to be compact wlog, according to the tameness theorem (see \cite{Agol}, \cite{CG}). It follows from \cite[Theorem 1.3]{Hai} that the groups $\Gamma$ and $\rho(\Gamma)$ are commensurable, i.e. contains isomorphic finite index subgroups that we will denote $Q<\Gamma$ and $H<\rho(\Gamma)$. Let $\psi: Q \rightarrow H$ denote the previous isomorphism, then $\psi^{-1}(H\cap H_0)$ is a torsion-free subgroup of $\Gamma$ with finite index.
\end{proof}

The following lemma is a particular case of \cite[Corollary 1.3]{GW}. The reader is referred to \cite[Corollary 3.4 and Theorem 5.15]{GW} for a proof.

\begin{lem}\label{Gui}
Let $\Gamma$ be a hyperbolic group and $\Gamma'$ be a finite index subgroup of $\Gamma$. A representation $\rho \in R(\Gamma,PSL(2,\mathbb{C}))$ is convex-cocompact if and only if the restriction of $\rho$ to $\Gamma'$ is convex-cocompact.
\end{lem}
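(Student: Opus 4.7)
The plan is to translate the statement into the language of orbit maps via Proposition \ref{qc}, and then exploit the standard fact that a finite-index inclusion $\Gamma' \hookrightarrow \Gamma$ of finitely generated groups is, for any finite generating sets, a quasi-isometric embedding onto a cobounded subset. Fix a finite generating set $S$ of $\Gamma$, coset representatives $\gamma_1, \dots, \gamma_n$ with $D := \max_i |\gamma_i|_S$, and some finite generating set $S'$ of $\Gamma'$; then the inclusion $(\Gamma', d_{S'}) \hookrightarrow (\Gamma, d_S)$ is a quasi-isometric embedding with $D$-dense image. The finite-kernel condition is immediate in both directions since $\ker \rho \cap \Gamma'$ is a finite-index subgroup of $\ker \rho$.

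For the forward direction, if $\rho$ is convex cocompact then by Proposition \ref{qc} some orbit map $\tau_{\rho,x} \colon C_S(\Gamma) \to \mathbb{H}^3$ is a quasi-isometric embedding. Composing with the quasi-isometric inclusion $C_{S'}(\Gamma') \hookrightarrow C_S(\Gamma)$ gives a quasi-isometric embedding which agrees on vertices with an orbit map for $\rho|_{\Gamma'}$; since orbit maps are uniformly Lipschitz (Remark \ref{lipschitz}), the two maps differ by a bounded amount, so the orbit map of $\rho|_{\Gamma'}$ is itself a quasi-isometric embedding and Proposition \ref{qc} yields convex cocompactness of $\rho|_{\Gamma'}$.

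For the backward direction, assume $\tau_{\rho|_{\Gamma'},x}$ is a quasi-isometric embedding. By $\rho$-equivariance it is enough to bound $d_{\mathbb{H}^3}(x, \rho(\eta)\cdot x)$ from below by an affine function of $|\eta|_S$ for all $\eta \in \Gamma$ (the matching upper bound coming from Remark \ref{lipschitz}). Writing $\eta = \eta' \gamma_i$ with $\eta' \in \Gamma'$, the triangle inequality controls $d_{\mathbb{H}^3}(x, \rho(\eta)\cdot x)$ and $d_{\mathbb{H}^3}(x, \rho(\eta')\cdot x)$ within $\max_i d_{\mathbb{H}^3}(x, \rho(\gamma_i)\cdot x)$, while $|\eta|_S$ and $|\eta'|_S$ agree up to $D$ and $|\eta'|_S$ is quasi-isometrically equivalent to $|\eta'|_{S'}$; plugging into the hypothesis on $\tau_{\rho|_{\Gamma'},x}$ yields the desired lower bound and Proposition \ref{qc} concludes. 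The main obstacle is the careful bookkeeping of quasi-isometry constants through these three comparisons (inclusion of Cayley graphs, base-point displacement estimate via coset representatives, and the orbit-map embedding hypothesis); there is no conceptual difficulty, which is why the paper defers the full proof to \cite{GW}.
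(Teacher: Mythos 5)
Your argument is correct, but it is genuinely different from what the paper does: the paper offers no proof of Lemma \ref{Gui} at all, deferring entirely to Guichard--Wienhard \cite[Corollary 1.3]{GW}, i.e.\ to the general theory of Anosov representations, for which stability of the Anosov property under passage to and from finite-index subgroups is established (convex cocompact representations into a rank-one group being exactly the Anosov ones). You instead give a self-contained coarse-geometric proof inside the paper's own toolkit: the characterization of quasi-convexity by orbit maps (Proposition \ref{qc}), the standard fact that the inclusion $(\Gamma',d_{S'})\hookrightarrow(\Gamma,d_S)$ of a finite-index subgroup is a quasi-isometric embedding with cobounded image, and the observation that $\ker\rho\cap\Gamma'$ has finite index in $\ker\rho$. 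Both directions check out: the forward one because the orbit map of $\rho|_{\Gamma'}$ agrees on vertices (hence up to bounded error) with the restriction of a quasi-isometric embedding, and the backward one because the coset decomposition $\eta=\eta'\gamma_i$ transfers the lower bound on $d(x,\rho(\eta')x)$ to one on $d(x,\rho(\eta)x)$ at the cost of the uniform constants $D$ and $\max_i d(x,\rho(\gamma_i)x)$, the upper bound being the Lipschitz estimate of Remark \ref{lipschitz}. What the citation buys the paper is brevity and a statement valid for Anosov representations into arbitrary semisimple groups; what your proof buys is elementarity and independence from \cite{GW} --- it works verbatim for quasi-convex representations into $Isom(X)$ for any Gromov-hyperbolic, proper, geodesic $X$, which is actually more general than the $PSL_2(\mathbb{C})$ statement the paper needs.
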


\begin{thm}\label{Sullivan}
Let $\Gamma$ be a hyperbolic group.
The set $CC(\Gamma,PSL_2(\mathbb{C}))$ of conjugacy classes of convex cocompact representations is precisely the interior of $AH(\Gamma,PSL_2(\mathbb{C}))$, the set of conjugacy classes of representations with discrete image and finite kernel.
\end{thm}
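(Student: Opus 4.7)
The plan is to prove the two inclusions $CC(\Gamma,PSL_2(\mathbb{C})) \subset \mathrm{int}(AH(\Gamma,PSL_2(\mathbb{C})))$ and $\mathrm{int}(AH(\Gamma,PSL_2(\mathbb{C}))) \subset CC(\Gamma,PSL_2(\mathbb{C}))$ separately. The first is essentially immediate from earlier results in the paper, while the second reduces to Sullivan's theorem in the torsion-free case via Lemmas \ref{Peter} and \ref{Gui}.

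For $CC \subset \mathrm{int}(AH)$: since $PSL_2(\mathbb{C})$ is a semi-simple rank-one Lie group, quasi-convex and convex cocompact coincide, so $CC(\Gamma,PSL_2(\mathbb{C})) = QC(\Gamma,PSL_2(\mathbb{C}))$. By Proposition \ref{QC}, $QC(\Gamma,PSL_2(\mathbb{C}))$ is open in $X(\Gamma,PSL_2(\mathbb{C}))$, and by the definition of a quasi-convex representation it is contained in $AH(\Gamma,PSL_2(\mathbb{C}))$, whence $CC \subset \mathrm{int}(AH)$.

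For the reverse inclusion, if $AH(\Gamma,PSL_2(\mathbb{C})) = \emptyset$ there is nothing to prove. Otherwise, Lemma \ref{Peter} supplies a torsion-free finite-index subgroup of $\Gamma$, and by passing to its normal core I may assume it is also normal, so $\Gamma' \triangleleft \Gamma$ is a torsion-free finite-index normal subgroup. Let $r: X(\Gamma,PSL_2(\mathbb{C})) \to X(\Gamma',PSL_2(\mathbb{C}))$ denote the continuous restriction map $[\rho] \mapsto [\rho|_{\Gamma'}]$. A short verification yields $AH(\Gamma,PSL_2(\mathbb{C})) = r^{-1}(AH(\Gamma',PSL_2(\mathbb{C})))$: the image $\rho(\Gamma)$ is discrete iff its finite-index subgroup $\rho(\Gamma')$ is (finite unions of discrete cosets are discrete), and $\ker\rho$ is finite iff $\ker\rho|_{\Gamma'} = \ker\rho\cap\Gamma'$ is, since $\ker\rho/\ker\rho|_{\Gamma'}$ embeds in the finite quotient $\Gamma/\Gamma'$. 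Combining this equality with Lemma \ref{Gui} and Sullivan's theorem applied to the torsion-free hyperbolic group $\Gamma'$, the reverse inclusion reduces to the implication: $[\rho]\in\mathrm{int}(AH(\Gamma,PSL_2(\mathbb{C})))$ implies $[\rho|_{\Gamma'}]\in\mathrm{int}(AH(\Gamma',PSL_2(\mathbb{C})))$.

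The main obstacle is this last implication. Continuity of $r$ alone only delivers the easy containment $r^{-1}(\mathrm{int}(AH(\Gamma'))) \subset \mathrm{int}(AH(\Gamma))$, and the direction needed is the opposite. I would prove it by showing that $r$ is an open map at every point of $AH(\Gamma,PSL_2(\mathbb{C}))$: granting this, $r(\mathrm{int}(AH(\Gamma)))$ is an open subset of $X(\Gamma',PSL_2(\mathbb{C}))$ contained in $AH(\Gamma')$, hence in $\mathrm{int}(AH(\Gamma'))$, and Sullivan's torsion-free theorem combined with Lemma \ref{Gui} yields $[\rho]\in CC(\Gamma,PSL_2(\mathbb{C}))$. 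To establish openness at $[\rho]$ I would fix coset representatives $e=g_1,\ldots,g_k$ for $\Gamma/\Gamma'$ and, given a small perturbation $\sigma$ of $\rho|_{\Gamma'}$, seek elements $\widetilde\rho(g_i)$ near $\rho(g_i)$ such that the prescription $\widetilde\rho(g_i h) = \widetilde\rho(g_i)\sigma(h)$ defines a homomorphism $\widetilde\rho:\Gamma\to PSL_2(\mathbb{C})$. The resulting finite compatibility system coming from the relations $g_ig_j = g_{m(i,j)}h_{i,j}$ and $g_ihg_i^{-1}\in\Gamma'$ has linearization at $\rho$ given by the restriction map $Z^1(\Gamma,\mathrm{Ad}\,\rho)\to Z^1(\Gamma',\mathrm{Ad}\,\rho|_{\Gamma'})$ on group cohomology; surjectivity modulo coboundaries follows from the standard transfer argument for the finite quotient $\Gamma/\Gamma'$, and an implicit function theorem application then produces the required extension, which is the technically most delicate step.
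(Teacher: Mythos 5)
Your overall architecture is the same as the paper's: $CC\subset Int(AH)$ via Proposition \ref{QC}, and the reverse inclusion by passing to the torsion-free finite-index subgroup supplied by Lemma \ref{Peter}, checking $AH(\Gamma,PSL_2(\mathbb{C}))=r^{-1}(AH(\Gamma',PSL_2(\mathbb{C})))$, and combining Lemma \ref{Gui} with Sullivan's theorem for $\Gamma'$. The two arguments diverge only at the openness of the restriction map $r$: the paper disposes of it in one line (with $S=S_0\cup\{g_1,\dots,g_k\}$ the map $G^{|S|}\to G^{|S_0|}$ is a coordinate projection), while you correctly identify this as the delicate point — a projection restricted to the subsets $R(\Gamma,G)\subset G^{|S|}$ and $R(\Gamma',G)\subset G^{|S_0|}$ is not automatically open onto the target, and what is really at stake is exactly the extension problem you isolate.

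The gap is that your proposed repair of this step fails, and indeed the statement you set out to prove ($r$ is open at every point of $AH$) is false. The transfer argument for the finite quotient $Q=\Gamma/\Gamma'$ shows that the restriction $H^1(\Gamma,\mathrm{Ad}\,\rho)\to H^1(\Gamma',\mathrm{Ad}\,\rho|_{\Gamma'})$ is injective with image the $Q$-invariants; it is \emph{not} surjective unless $Q$ happens to act trivially, so "surjectivity modulo coboundaries" is precisely what transfer does not deliver. Concretely, let $\Gamma$ be a cocompact $(2,3,7)$-triangle group in $PSL_2(\mathbb{R})\subset PSL_2(\mathbb{C})$ and $\Gamma'$ a torsion-free surface subgroup of finite index: here $\dim_{\mathbb{C}}H^1(\Gamma,\mathrm{Ad}\,\rho)=6g-6+2r=0$, so $[\rho]$ is locally rigid, while $[\rho|_{\Gamma'}]$ admits a positive-dimensional family of quasi-Fuchsian deformations; almost no small perturbation of $\rho|_{\Gamma'}$ extends to $\Gamma$, and $r$ is nowhere near open there. (This example lies in $CC$, but nothing in your mechanism uses $[\rho]\notin CC$, so it shows the mechanism itself is unavailable.) What the theorem actually needs is weaker than openness — only that when $[\rho|_{\Gamma'}]$ is approximated by conjugacy classes outside $AH(\Gamma',PSL_2(\mathbb{C}))$, the class $[\rho]$ is approximated by classes outside $AH(\Gamma,PSL_2(\mathbb{C}))$ — but your proposal offers no route to this that avoids the failed surjectivity, so the inclusion $Int(AH)\subset CC$ is not established as written.
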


\begin{proof}
Suppose that the set $AH(\Gamma,G)$ of conjugacy classes of representations with discrete image and finite kernel is not empty. We already know that $CC(\Gamma,G)\subset Int(AH(\Gamma,G))$ (see Proposition \ref{QC}), we will prove the other inclusion using contraposition. Let $[\rho]\in AH(\Gamma,G)\setminus CC(\Gamma,G)$ and $U$ be a neighbourhood of $[\rho]$ we will prove that there is $[\sigma]\in U$ such that $[\sigma]\notin AH(\Gamma,G)$. We denote by $\Gamma_0$ the torsion-free finite index subgroup of $\Gamma$ given by Lemma \ref{Peter} and by $r: X(\Gamma,G) \rightarrow X(\Gamma_0,G)$ the restriction map defined by $r([\sigma])=[\sigma_{|\Gamma_0}]$ for all $[\sigma]\in X(\Gamma,G)$. With an appropriate choice of generators for $\Gamma_0$ and $\Gamma$, we can show that $r$ is an open map. Indeed, write for some $g_1,...,g_k \in \Gamma$ : 
$$\Gamma=\Gamma_0\sqcup g_1\Gamma_0 \sqcup ... \sqcup g_k\Gamma_0$$
a partition of $\Gamma$ in $\Gamma_0$ left cosets. Consider $S_0$ a finite set of generators for $\Gamma_0$, and $S:=S_0 \cup \{g_1,...,g_k\}$. The finite set $S$ generates $\Gamma$ and the restriction map $r' : G^{|S|} ~ \rightarrow G^{|S_0|}$ is just the projection on the $|S_0|$ first coordinates. It follows that $r$ is an open map, hence if $U$ denotes a neighbourhood of $[\rho]$ in $X(\Gamma,G)$, its image $U_0:= r(U)$ is a neighbourhood of $[\rho_0]:= r([\rho])$. Lemma \ref{Gui} assures us that the conjugacy class $[\rho_0] \in AH(\Gamma_0,G)\setminus CC(\Gamma_0,G)$, hence Sullivan's theorem \cite[Theorem A, Section 7]{Sullivan} prove the existence of a representation $[\sigma_0]\in U_0$ such that $[\sigma_0] \notin AH(\Gamma_0,G)$ which in turn prove the existence of a representation $[\sigma]\in U$ such that $[\sigma] \notin AH(\Gamma,G)$.
\end{proof}

This theorem will be used in Section 6 to show Theorem \ref{df2}.

\section{$A$-stable representations}

In this section, let $\Gamma$ be a hyperbolic group, $S$ be a finite set of generators for $\Gamma$ and $G=Isom(X)$ be the isometry group of a Gromov-hyperbolic, proper and geodesic metric space $X$.\\

For any $Aut(\Gamma)$-invariant subset $A\subset \Gamma$, we define an $Out(\Gamma)$-invariant open subset $S_A(\Gamma,G)\subset X(\Gamma,G)$ called the set of $A$-stable representations. We give a sufficient condition on $A\subset \Gamma$ for the action of $Out(\Gamma)$ on $S_A(\Gamma,G)$ to be properly discontinuous and we continue by proving the equivalence between $A$-stability and the $A$-well-displacing property in the particular case when $A$ is a characteristic subgroup. We end this section by giving a geometric characterization, due to Lee in \cite{Lee}, of discrete, faithful and $A$-stable representations when $G=PSL_2(\mathbb{C})$.

The letter $A$ will always denote an $Aut(\Gamma)$-invariant subset of $\Gamma$.\\

\subsection{Definition}

If $\gamma\in \Gamma$ has infinite order, $\gamma_-$ and $\gamma_+$ will denote its fixed points in $\partial C_S(\Gamma)$ for the extended action of $\gamma$ on $\overline{C_S(\Gamma)}$. Moreover we denote by $L_S(\gamma)$ the set of geodesics of $C_S(\Gamma)$ connecting $\gamma_-$ to $\gamma_+$.\\
If $A\subset \Gamma$, we name $A_{\infty}$ the subset consisting of infinite order elements of $A$ and $L_S(A):=\bigcup_{a\in A_\infty} L_S(a)$.\\

\begin{defi} 
Let $A \subset \Gamma$ be a subset invariant by automorphism of $\Gamma$, $K\geqslant 1$ and $C\geqslant 0$ be two constants.\\ 
A representation $\rho\in R(\Gamma,G)$ is \textit{($K,C$) $A$-stable} if there is a point $x\in X$,  such that for all $l \in L_S(A)$, $\Or(l)$ is a ($K,C$) quasi-geodesic.\\
A representation $\rho\in R(\Gamma,G)$ is \textit{$A$-stable} if there are constants $K\geqslant 1,C\geqslant 0$ such that $\rho$ is ($K,C$) $A$-stable.
\end{defi}

It is easily checked that being $A$-stable depends neither on the conjugacy class of the representation, neither on the choice of the base point $x\in X$, nor on the choice of a finite set of generators for $\Gamma$.

We write $S_A(\Gamma,G)$ the set of conjugacy classes of $A$-stable representations.

\subsection{$S_A(\Gamma,G)$ is an invariant open set}

We now aim at proving the following proposition.

\begin{prop}\label{ouvert}
The set $S_A(\Gamma,G)$ is $Out(\Gamma)$-invariant and open in $X(\Gamma,G)$. \\
More precisely, if $\delta$ is a hyperbolicity constant for $X$, for all constants $K_0\geqslant 1$, $C_0\geqslant 0$, there are constants $K\geqslant 1$, $C\geqslant 0$ depending only on $K_0, C_0$ and $\delta$ such that for all $(K_0,C_0)$ $A$-stable representations $\rho_0$, there is an open neighbourhood $U_0$ of $\rho_0$ such that for all $\rho\in U_0$, $\rho$ is $(K,C)$ $A$-stable.
\end{prop}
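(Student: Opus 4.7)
The plan is to split the proposition into two parts and handle them sequentially. The $Out(\Gamma)$-invariance reduces to the observation that any automorphism of $\Gamma$ is a self quasi-isometry of $C_S(\Gamma)$ (since it sends one finite generating set to another), combined with the stability of quasi-geodesics under bounded perturbation (Morse lemma). The quantitative openness rests on the standard local-to-global principle for quasi-geodesics in hyperbolic spaces, applied in conjunction with the $\Gamma$-equivariance of the orbit map.

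For invariance, I would fix $\phi \in Aut(\Gamma)$ and let $\rho \in R(\Gamma,G)$ be $(K_0,C_0)$ $A$-stable. The orbit maps satisfy $\tau_{\rho\circ\phi^{-1},x}(\gamma) = \tau_{\rho,x}(\phi^{-1}(\gamma))$ on vertices. For $a \in A_\infty$ and $l \in L_S(a)$, the image $\phi^{-1}(l)$ is a bi-infinite quasi-geodesic of $C_S(\Gamma)$ whose endpoints in $\partial C_S(\Gamma)$ are the fixed points of $\phi^{-1}(a)$. The Morse lemma produces a true geodesic $l' \in L_S(\phi^{-1}(a))$ at bounded Hausdorff distance from $\phi^{-1}(l)$; since $A$ is $Aut(\Gamma)$-invariant, $\phi^{-1}(a) \in A$, so $\tau_{\rho,x}(l')$ is a $(K_0,C_0)$ quasi-geodesic. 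Lipschitz control of $\tau_{\rho,x}$ (Remark \ref{lipschitz}) then transfers this property to $\tau_{\rho,x}(\phi^{-1}(l)) = \tau_{\rho\circ\phi^{-1},x}(l)$, with (worse) constants depending only on $\phi$, $K_0$, $C_0$ and $\delta$.

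For the uniform openness, I would invoke the local-to-global principle: for every $(K_1,C_1)$ there exist a scale $L_0 = L_0(K_1,C_1,\delta)$ and constants $(K,C) = (K(K_1,C_1,\delta), C(K_1,C_1,\delta))$ such that any path in a $\delta$-hyperbolic geodesic space which is $(K_1,C_1)$ quasi-isometric on every sub-arc of length at most $L_0$ is globally a $(K,C)$ quasi-isometric embedding. Applying this with $(K_1,C_1) = (K_0+1,C_0+1)$ furnishes both the target constants $(K,C)$ of the proposition and the threshold $L_0$. I would then let $B_{L_0}$ be the finite set of elements of $\Gamma$ of word length at most $L_0$, and take $U_0$ to be the open neighbourhood of $\rho_0$ consisting of those $\rho$ satisfying $|d(x,\rho(\gamma)\cdot x) - d(x,\rho_0(\gamma)\cdot x)| < \varepsilon$ for every $\gamma \in B_{L_0}$, with $\varepsilon \leqslant 1$; such a $U_0$ exists because $R(\Gamma,G) \hookrightarrow G^{|S|}$ carries the compact-open topology.

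The key identity $d(\tau_{\rho,x}(p),\tau_{\rho,x}(q)) = d(x,\rho(p^{-1}q)\cdot x)$ for vertices $p,q$ of $C_S(\Gamma)$, together with the fact that $p^{-1}q \in B_{L_0}$ whenever $d_S(p,q) \leqslant L_0$, reduces the local verification to a finite condition. For $\rho \in U_0$ and any $l \in L_S(A)$, the $(K_0,C_0)$ $A$-stability of $\rho_0$ combined with the $\varepsilon$-perturbation estimate yields $\tfrac{1}{K_0+1}d_S(p,q) - (C_0+1) \leqslant d(\tau_{\rho,x}(p),\tau_{\rho,x}(q)) \leqslant (K_0+1) d_S(p,q) + (C_0+1)$ for all $p,q \in l$ with $d_S(p,q) \leqslant L_0$. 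The local-to-global principle then upgrades this to the global $(K,C)$ quasi-geodesic property on $l$, uniformly across $l \in L_S(A)$. The main difficulty is precisely securing this uniformity over the infinite family $L_S(A)$, and the decisive input that makes the reduction to a finite set possible is the $\Gamma$-equivariance of the orbit map encoded in the identity above.
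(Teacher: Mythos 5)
Your proof of the quantitative openness statement is correct and is essentially the paper's own argument: the same local-to-global principle (Proposition \ref{local}) applied at a scale $M$ determined by $(K_0,C_0,\delta)$, the same reduction of the local condition to a finite ball via the equivariance identity $d(\tau_{\rho,x}(p),\tau_{\rho,x}(q))=d(x,\rho(p^{-1}q)\cdot x)$, and a neighbourhood $U_0$ cut out by finitely many perturbation inequalities; only the bookkeeping of constants differs. For the $Out(\Gamma)$-invariance you take a slightly different route: the paper simply observes that $\tau_{\rho\circ\varphi^{-1},x}$ sends the geodesics of $L_{\varphi(S)}(A)$ to uniform quasi-geodesics and invokes the independence of $A$-stability from the choice of generating set, whereas you push a geodesic $l\in L_S(a)$ forward through the quasi-isometry $\varphi^{-1}$, match endpoints with a true geodesic of $L_S(\varphi^{-1}(a))$ via the Morse lemma, and transfer the quasi-geodesic property back (in effect re-proving the content of Lemmas \ref{l2}--\ref{l3} in this special case). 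Both are valid; the paper's version is shorter because it hides the Morse-lemma work inside the statement that $A$-stability is independent of the generating set, while yours makes that dependence explicit at the cost of constants depending on $\varphi$ --- which is harmless, since invariance of the set $S_A(\Gamma,G)$ does not require uniformity over $Aut(\Gamma)$.
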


In order to prove this proposition, we introduce :

\begin{defi}
Let $K\geqslant 1$, $C\geqslant 0$ and $M\geqslant 0$.\\
A \textit{($K,C,M$) local quasi-geodesic} is the image of $f:\mathbb{Z}\rightarrow X$ or $\mathbb{R}\rightarrow X$ such that $f$ restricted to any segment of length at most $M$ is a ($K,C$) quasi-isometric embedding.
\end{defi}

Next proposition is proved in \cite[Theorem 1.4]{Quasi-geo} and gives us a sufficient condition for a family of paths in a Gromov-hyperbolic metric space to be uniform quasi-geodesics.

\begin{prop}\label{local}
Let $X$ be a $\delta$-hyperbolic, geodesic metric space.\\
For all $K'\geqslant 1$ and $C'\geqslant 0$ there are  $K\geqslant 1$, $C\geqslant 0$ and $M\geqslant 0$ depending only on $\delta$, $K'$ and $C'$ such that any ($K',C',M$) local quasi-geodesic of $X$ is a ($K,C$) quasi-geodesic.\\
\end{prop}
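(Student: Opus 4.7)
The plan is to deduce this local-to-global principle from the Morse stability lemma for quasi-geodesics in Gromov-hyperbolic spaces, which asserts that there exists $R = R(\delta, K', C')$ such that any $(K', C')$-quasi-geodesic of $X$ lies within Hausdorff distance $R$ of any geodesic sharing its endpoints. Granted this stability result, I would choose $M$ to be a sufficiently large constant depending on $\delta$, $K'$, $C'$ and $R$, and then show that, for this choice of $M$, any $(K', C', M)$-local quasi-geodesic $c : I \to X$ globally fellow-travels any geodesic between its extremities; the global $(K, C)$-quasi-geodesic inequalities then follow without much effort.

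The central step is the following claim: for $M$ large enough, if $c : [a, b] \to X$ is a $(K', C', M)$-local quasi-geodesic and $\sigma$ is a geodesic from $c(a)$ to $c(b)$, then $c([a, b])$ lies in the $R_0$-neighbourhood of $\sigma$ for some $R_0 = R_0(\delta, K', C')$. I would proceed by contradiction: let $y = c(t_0)$ realise the maximum of $t \mapsto d(c(t), \sigma)$ and consider a sub-interval $[s_1, s_2] \subset [a, b]$ of length at most $M$ containing $t_0$. By the local hypothesis, $c|_{[s_1, s_2]}$ is a genuine $(K', C')$-quasi-geodesic, hence by stability lies within Hausdorff distance $R$ of any geodesic $\sigma'$ joining $c(s_1)$ to $c(s_2)$. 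The maximality of $y$ ensures that $c(s_1)$ and $c(s_2)$ are closer to $\sigma$ than $y$ is, and the $\delta$-thinness of the quadrilateral formed by $c(s_1), c(s_2)$ and their nearest-point projections onto $\sigma$ then forces $\sigma'$ to run close to $\sigma$ along its middle portion. For $M$ large enough compared to $R$ and $\delta$, this contradicts the assumption that $d(y, \sigma)$ is large.

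Once this fellow-traveling estimate is in place, the quasi-geodesic inequalities are nearly immediate. For the upper bound, one cuts $[s, t]$ into at most $\lceil |s-t|/M \rceil + 1$ sub-intervals of length $\leq M$, applies the local hypothesis on each one, and sums via the triangle inequality to obtain $d(c(s), c(t)) \leq K|s-t| + C$. For the lower bound, one considers the nearest-point projection of $c([s, t])$ onto a geodesic from $c(s)$ to $c(t)$: since $c([s, t])$ lies within $R_0$ of that geodesic and the local $(K', C')$-control bounds the speed of $c$ on scales $\leq M$, one concludes that $|s - t|$ is controlled by $K \cdot d(c(s), c(t)) + C$.

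The main obstacle is the calibration of $M$ in the contradiction argument: one must ensure simultaneously that $M$ is large enough to genuinely use the local $(K', C')$-quasi-geodesic hypothesis on $[s_1, s_2]$, and that the Morse radius $R$ applied to this sub-arc yields a Hausdorff bound strictly smaller than the distance we are trying to rule out. This balancing of constants against $\delta$ is the technical heart of the argument and is carried out explicitly in \cite[Theorem 1.4]{Quasi-geo}.
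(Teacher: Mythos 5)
The paper offers no proof of this proposition: it is quoted verbatim from the literature, with the statement ``Next proposition is proved in \cite[Theorem 1.4]{Quasi-geo}'', so there is no in-paper argument to compare yours against. Your sketch is the standard proof of that cited local-to-global theorem (Morse stability on windows of length $M$, a maximal-distance contradiction argument to get uniform fellow-traveling with the geodesic between the endpoints, then the two quasi-geodesic inequalities), and it is essentially sound. Two points deserve to be made explicit when you write it up. First, in the contradiction step the thin-quadrilateral argument gives you a point $y'$ on $\sigma'$ with $d(y,y')\leqslant R$ that is $2\delta$-close to one of the three other sides; to exclude the two ``vertical'' sides $[c(s_i),p_i]$ you must use the lower local quasi-geodesic bound $d(y,c(s_i))\geqslant \frac{1}{K'}\,\frac{M}{2}-C'$ together with $d(c(s_i),p_i)\leqslant d(y,\sigma)$, which forces $\frac{M}{2K'}\leqslant 2R+4\delta+C'$; so the calibration of $M$ involves $K'$ and $C'$ and not only $R$ and $\delta$, and one should center the window at $t_0$ (handling separately the degenerate case where $t_0$ is within $M/2$ of an endpoint, where $p_i=c(s_i)$ lies on $\sigma$ and the bound is immediate). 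Second, the lower bound $|s-t|\leqslant K\,d(c(s),c(t))+C$ is not quite immediate from fellow-traveling alone: subdividing $[s,t]$ into blocks of length $M$ gives consecutive points at mutual distance at least $\frac{M}{K'}-C'$, each within $R_0$ of the geodesic $[c(s),c(t)]$, and one still needs a quasi-monotonicity argument for their projections (again valid once $\frac{M}{K'}-C'$ dominates a fixed multiple of $R_0+\delta$) to prevent backtracking. Both are routine and are exactly the computations carried out in \cite[Chapter 3]{Quasi-geo}.
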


\begin{proof}[Proof of Proposition \ref{ouvert}]
We start by proving that $S_A(\Gamma,G)$ is $Out(\Gamma)$-invariant. Let $\rho\in R(\Gamma,G)$ be an $A$-stable representation and $\varphi\in Aut(\Gamma)$. By definition, $\tau_{\rho,x}$ maps all geodesics of $L_S(A)$ to uniform quasi-geodesics of $X$. This implies that the orbit map $\tau_{\varphi\cdot \rho, x} = \tau_{\rho \circ \varphi^{-1},x}$ maps all geodesics of $L_{\varphi(S)}(A)$ to uniform quasi-geodesics of $X$. Since $\varphi(S)$ is a set of generators for $\Gamma$ and $A$-stability does not depend on the choice of generators, $\varphi\cdot \rho$ is also $A$-stable.\\

Let us now prove that $S_A(\Gamma,G)\subset X(\Gamma,G)$ is open in $X(\Gamma,G)$.\\
Let $[\rho_0]\in S_A(\Gamma,G)$ be the conjugacy class of a $(K_0,C_0)$ $A$-stable representation $\rho_0$.\\
Let $M,K,C$ be the constants obtained by Proposition \ref{local} associated to $\delta$, $K_0$ and $2C_0$.\\
We choose an open neighbourhood $U_0$ of $[\rho_0]$ in $X(\Gamma,G)$ such that for any conjugacy class of representations in $U_0$, there is a representative $\rho$ such that $d(\tau_{\rho,x}(\gamma),\tau_{\rho_0,x}(\gamma))\leqslant C_0$ for all $\gamma\in \overline{B_S(\mathrm{e},M)}$ in a closed ball of radius $M$ around the neutral element.\\

We will show that every conjugacy class of representations in $U_0$ is ($K,C$) $A$-stable. Proposition \ref{local} ensures us that it is sufficient to show that for all elements in $U_0$, there is a representative representation $\rho$ such that for every geodesic $l\in L_S(A)$ , $\tau_{\rho,x}(l)$ is a ($K_0,2C_0,M$) local quasi-geodesic of $X$.
Let $l=\{l_n\}_{n\in\mathbb{Z}}$ be a geodesic of $L_S(A)$, i.e, $d_S(l_n,l_m)=|n-m|$ for all $m,n\in \mathbb{Z}$. 

Let $n,m\in \mathbb{Z}$ such that $|n-m|\leqslant M$ :

\begin{equation*} 
\begin{split}
d(\tau_{\rho,x}(l_n),\tau_{\rho,x}(l_m)) & = d(\tau_{\rho,x}(e),\tau_{\rho,x}(l_n^{-1}l_m)) \\
 & \leqslant d(x,\tau_{\rho_0,x}(l_n^{-1}l_m)) + d(\tau_{\rho_0,x}(l_n^{-1}l_m), \tau_{\rho,x}(l_n^{-1}l_m)) \\
 & \leqslant d(\tau_{\rho_0,x}(l_n), \tau_{\rho_0,x}(l_m)) + C_0 \\
 & \leqslant K_0|n-m|+2C_0
\end{split}
\end{equation*}

Moreover, by symmetry
\begin{equation*} 
d(\tau_{\rho_0,x}(l_n),\tau_{\rho_0,x}(l_m)) \leqslant d(\tau_{\rho,x}(l_n), \tau_{\rho,x}(l_m)) + C_0
\end{equation*}

We deduce that
\begin{equation*} 
\begin{split}
d(\tau_{\rho,x}(l_n),\tau_{\rho,x}(l_m)) & \geqslant d(\tau_{\rho_0,x}(l_n),\tau_{\rho_0,x}(l_m))-C_0 \\
 & \geqslant \frac{|n-m|}{K_0}-2C_0
\end{split}
\end{equation*}

The orbit map $\tau_{\rho,x}$ maps every geodesic $l\in L_S(A)$ to ($K_0,2C_0,M$) local quasi-geodesics of $X$, hence ($K,C$) quasi-geodesics of $X$ according to Proposition \ref{local} where the constants $K$ and $C$ depends neither on $[\rho]\in U_0$  nor on $l\in L_S(A)$.

\end{proof}

\subsection{$A$-well-displacing representations}

\begin{defi}
Let $A \subset \Gamma$ be a subset invariant by automorphism of $\Gamma$, $J\geqslant 1$ and $B\geqslant 0$ be two constants.\\
We say that a representation $\rho\in R(\Gamma,G)$ is \textit{$(J,B)$ $A$-well-displacing} if there are $J\geqslant 1$ and $B\geqslant 0$ such that :
$$\frac{1}{J}l_S(a)-B \leqslant l(\rho(a)) \leqslant Jl_S(a)+B ~~~~ \forall a\in A$$
We say that a representation $\rho\in R(\Gamma,G)$ is \textit{$A$-well-displacing} if there are $J\geqslant 1$ and $B\geqslant 0$ such that $\rho$ is $(J,B)$ $A$-well-displacing.
\end{defi}

\begin{prop}\label{displacing}
Given two constants $K\geqslant 1$ and $C\geqslant 0$, there are constants $J\geqslant 1$ and $B\geqslant 0$ such that every $(K,C)$ $A$-stable representation is $(J,B)$ $A$-well-displacing.
\end{prop}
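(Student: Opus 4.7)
The plan is to prove the two-sided bound for $l(\rho(a))$ by first obtaining analogous bounds on the stable norm $N(\rho(a))$, and then converting to $l(\rho(a))$ via Proposition~\ref{translation}.

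First I would dispose of the case where $a\in A$ has finite order: then $l_S(a)=0$ and $\rho(a)$ has finite order in $G$, so $d(x,\rho(a^n)(x))$ is bounded uniformly in $n$, hence $N(\rho(a))=0$. Proposition~\ref{translation} then yields $l(\rho(a))\leq 16\delta$, where $\delta$ denotes a hyperbolicity constant for $X$; this contribution is absorbed by the additive constant $B$.

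For $a\in A_\infty$, my first step is to replace $a$ by a conjugate $a'=gag^{-1}\in A$ with $|a'|_S=l_S(a)$. This is legal because $A$, being $Aut(\Gamma)$-invariant, is in particular closed under inner automorphisms, and both $l\circ \rho$ and $N\circ \rho$ are conjugation-invariant. By Lemma~\ref{l1}, $\bigcup_{n\in\mathbb{Z}}(a')^n[e,a']$ is then a $(K',C')$ quasi-axis for $a'$ with $K',C'$ depending only on $\delta_S$. The Morse stability of quasi-geodesics in the $\delta_S$-hyperbolic space $C_S(\Gamma)$ implies that this quasi-axis lies within Hausdorff distance at most some $D_1=D_1(\delta_S,K',C')$ of any geodesic $l\in L_S(a')$; for each $n$ I pick a vertex $l_{k_n}$ of $l$ with $d_S((a')^n,l_{k_n})\leq D_1$, and the quasi-axis inequality keeps $|k_n|$ within bounded additive error of $n\,l_S(a)$.

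Using the $A$-stability of $\rho$, $\tau_{\rho,x}(l)$ is a $(K,C)$ quasi-geodesic, so $|k_n|/K-C\leq d(\tau_{\rho,x}(l_0),\tau_{\rho,x}(l_{k_n}))\leq K|k_n|+C$. Combined with the triangle inequality, the two residual terms $d(x,\tau_{\rho,x}(l_0))$ and $d(\tau_{\rho,x}(l_{k_n}),\tau_{\rho,x}((a')^n))$ are each bounded by $\kappa D_1$, where $\kappa$ is the Lipschitz constant of the orbit map from Remark~\ref{lipschitz}. Altogether,
\begin{equation*}
\frac{n\,l_S(a)}{KK'}-E\;\leq\;d(x,\rho((a')^n)(x))\;\leq\;Kn\,l_S(a)+E,
\end{equation*}
where $E$ depends on $K,C,\delta_S,\kappa$ but not on $n$. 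Dividing by $n$ and letting $n\to\infty$, the additive constant $E$ disappears, yielding $l_S(a)/(KK')\leq N(\rho(a'))\leq K\,l_S(a)$. Proposition~\ref{translation} then gives $l_S(a)/(KK')\leq l(\rho(a'))\leq K\,l_S(a)+16\delta$, and since $l(\rho(a))=l(\rho(a'))$ I conclude by taking $J=KK'$ and $B=16\delta$.

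The main subtlety is that the Lipschitz constant $\kappa$ of $\tau_{\rho,x}$ depends on $\rho$, not only on the stability constants $(K,C)$. The key observation is that $\kappa$ enters only through the bounded additive error $E$, which washes out after dividing by $n$ and passing to the limit defining $N$; this is what ensures that the final $J,B$ depend only on $K,C$ and the universal hyperbolicity data $\delta_S,\delta$, uniformly across all $(K,C)$ $A$-stable representations.
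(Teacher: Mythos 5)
Your proof is correct and follows essentially the same route as the paper: reduce to a conjugate of minimal word length, use the uniform quasi-axis $\{a^n\}$ from Lemma~\ref{l1} together with the stability of $\tau_{\rho,x}$ on a geodesic of $L_S(a)$ to bound $d(x,\rho(a^n)(x))$, pass to the stable norm so the additive ($\rho$-dependent) errors wash out, and convert via Proposition~\ref{translation}. The only difference is that you re-derive the transfer of quasi-geodesicity from the geodesic to the quasi-axis by hand (Morse lemma plus the Lipschitz orbit map) where the paper simply invokes Lemma~\ref{l2}.
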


In order to prove this proposition, we will use the following lemma proved in \cite[Lemma II.9]{Lee}. 

\begin{lem}\label{l2}
Let $\rho\in R(\Gamma,G)$ be a representation, let $x\in X$ be a basepoint, let $K,K'\geqslant 1$ and $C,C'\geqslant 0$ be some constants and let $l$ be a geodesic of $C_S(\Gamma)$.\\
There are constants $K''\geqslant 1$ and $C''\geqslant 0$ such that, if $\Or(l)$ is a $(K,C)$ quasi-geodesic of $X$, then for any $(K',C')$ quasi-geodesic $l'$ of $C_S(\Gamma)$ with the same endpoints than $l$, $\Or(l')$ is a $(K'',C'')$ quasi-geodesic of $X$.
\end{lem}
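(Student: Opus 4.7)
The plan is to combine two classical facts: stability of quasi-geodesics in the hyperbolic Cayley graph $C_S(\Gamma)$, and the Lipschitz nature of the orbit map $\Or$. Since $\Gamma$ is hyperbolic, $C_S(\Gamma)$ is $\delta_S$-hyperbolic, so the Morse lemma gives a constant $D\geqslant 0$, depending only on $\delta_S$, $K'$ and $C'$, such that any $(K',C')$ quasi-geodesic of $C_S(\Gamma)$ sharing its endpoints with the geodesic $l$ lies at Hausdorff distance at most $D$ from $l$. The case where these endpoints lie in $\partial C_S(\Gamma)$ is handled by exhausting $l'$ by finite sub-quasi-geodesics and applying the segment version of the Morse lemma, then passing to the limit; alternatively, one invokes the ideal version in \cite{Quasi-geo}.

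By Remark \ref{lipschitz}, $\Or$ is $\kappa$-Lipschitz for $\kappa=\max_{s\in S}d(x,\Or(s))$. For the upper bound on $\Or(l')$, writing $p=l'_n$ and $q=l'_m$ for two points on the $(K',C')$ quasi-geodesic $l'$, the composition of Lipschitz and quasi-geodesic bounds immediately gives
$$d(\Or(p),\Or(q))\leqslant \kappa\, d_S(p,q)\leqslant \kappa(K'|n-m|+C'),$$
so the multiplicative constant is $\kappa K'$ and the additive error is $\kappa C'$.

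For the lower bound, pick $\tilde p,\tilde q\in l$ with $d_S(p,\tilde p),d_S(q,\tilde q)\leqslant D$. Since $l$ is a geodesic, $\tilde p$ and $\tilde q$ are joined along $l$ by a segment of combinatorial length exactly $d_S(\tilde p,\tilde q)$, so the hypothesis that $\Or(l)$ is a $(K,C)$ quasi-geodesic yields $d(\Or(\tilde p),\Or(\tilde q))\geqslant \tfrac{1}{K}d_S(\tilde p,\tilde q)-C$. Combining the triangle inequality with the $\kappa$-Lipschitz estimate,
$$d(\Or(p),\Or(q))\geqslant \tfrac{1}{K}d_S(\tilde p,\tilde q)-C-2\kappa D \geqslant \tfrac{1}{K}\bigl(d_S(p,q)-2D\bigr)-C-2\kappa D,$$
and using $d_S(p,q)\geqslant \tfrac{1}{K'}|n-m|-C'$ produces a lower bound of the required form $\tfrac{1}{K''}|n-m|-C''$, with $K''=KK'$ and $C''$ an explicit function of $K,C,K',C',D$ and $\kappa$.

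The main technical obstacle is the Morse-type stability used at the first step: while the segment version is completely standard, extending it rigorously to bi-infinite quasi-geodesics sharing ideal endpoints in $\partial C_S(\Gamma)$ requires a short compactness/limiting argument in $\overline{C_S(\Gamma)}$. Once $D$ is secured, the remainder reduces to the routine triangle-inequality book-keeping sketched above, and the constants $K''$ and $C''$ depend only on the input data $K,C,K',C'$, on $\delta_S$, and on the Lipschitz constant $\kappa$ of the orbit map.
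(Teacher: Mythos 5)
Your proof is correct; the paper itself does not prove Lemma \ref{l2} but cites it to Lee's thesis \cite[Lemma II.9]{Lee}, and your argument --- Morse stability of $(K',C')$ quasi-geodesics with common (ideal) endpoints in the hyperbolic Cayley graph, combined with the $\kappa$-Lipschitz property of $\Or$ for the upper bound and the $(K,C)$ quasi-geodesic hypothesis on $\Or(l)$ plus triangle inequalities for the lower bound --- is the standard proof and matches the cited source. The resulting constants depend only on $K,C,K',C',\delta_S$ and $\kappa$, hence are independent of $l$ and $l'$, which is exactly the uniformity needed when the lemma is applied in Proposition \ref{displacing}.
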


\begin{proof}[Proof of Proposition \ref{displacing}]
Let $\rho\in R(\Gamma,G)$ be a $(K,C)$ $A$-stable representation and $a\in A_\infty$.\\
Wlog, we can assume that $a$ is of minimal word length in its conjugacy class, hence $l_a=\{a^n\}$ is a $(K',C')$ quasi-geodesic of $C_S(\Gamma)$ using Lemma \ref{l1} with constants $K'$ and $C'$ independent of $a$. The quasi-geodesic $l_a$ has the same endpoints than some geodesic $l$ of $L_S(a)$.

Since $\Or(l)$ is a $(K,C)$ quasi-geodesic of $X$, Lemma \ref{l2} implies that there are constants $K''\geqslant 1$ and $C''\geqslant 0$ independent of $a$ such that $\Or(l_a)$ is $(K'',C'')$ quasi-geodesic of $X$.

Hence, for all $n\in \mathbb{N}^*$ and all $a\in A_\infty$ which have minimal word length in its conjugacy class :

\begin{equation*}
\begin{split}
\frac{1}{K''K'}|a^n|_S-\frac{C'}{K''}-C'' &\leqslant \frac{1}{K''}n-C'' \\
&\leqslant d(x,\rho(a^n)(x)) \\
&\leqslant K''n+C'' \\
& \leqslant K''K'|a^n|_S +K''C'+C''
\end{split}
\end{equation*}

If we denote by $N_S(\gamma)$ the stable norm of $\gamma\in \Gamma$ for its action on $C_S(\Gamma)$, dividing by $n$ and taking the limit yield :
\begin{equation*}
\frac{1}{K''K'} N_S(a) \leqslant N(\rho(a)) \leqslant K''K' N_S(a), ~~~~ \forall a\in A_\infty
\end{equation*}

Proposition \ref{translation} gives us constants $J\geqslant 1$ and $B\geqslant 0$ such that :
$$\frac{1}{J}l_S(a)-B \leqslant l(\rho(a)) \leqslant Jl_S(a)+B, ~~~~ \forall a\in A_\infty $$.\\

Since the image of a finite order element of $\Gamma$ by $\rho$ also has finite order in $G$, hence trivial translation length, we actually get :
$$\frac{1}{J}l_S(a)-B \leqslant l(\rho(a)) \leqslant Jl_S(a)+B, ~~~~ \forall a\in A.$$
\end{proof}

Observe that Proposition \ref{displacing} prove the direct implication of Proposition \ref{wd} by taking $A=\Gamma$.

Combining Propositions \ref{ouvert} and \ref{displacing}, we now show that :

\begin{cor}\label{displacing compact}
Let $R$ be a compact subset of $S_A(\Gamma,G)$, there are $J_R\geqslant 1$ and $B_R\geqslant 0$ such that every elements in $R$ is $(J_R,B_R)$ A-well-displacing.
\end{cor}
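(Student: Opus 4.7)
The plan is to reduce the claim to the two preceding results by a standard compactness argument. Specifically, I would combine Proposition \ref{ouvert} (which upgrades pointwise $A$-stability to uniform $A$-stability on a neighbourhood) with Proposition \ref{displacing} (which turns uniform $A$-stability into uniform $A$-well-displacing).

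First, I would fix $[\rho_0]\in R$. Since $[\rho_0]\in S_A(\Gamma,G)$, there exist constants $K_0\geqslant 1$ and $C_0\geqslant 0$ such that $\rho_0$ is $(K_0,C_0)$ $A$-stable. Proposition \ref{ouvert} then produces constants $K=K([\rho_0])\geqslant 1$ and $C=C([\rho_0])\geqslant 0$, depending only on $K_0$, $C_0$ and the hyperbolicity constant $\delta$ of $X$, together with an open neighbourhood $U_{[\rho_0]}$ of $[\rho_0]$ in $X(\Gamma,G)$ such that every $[\rho]\in U_{[\rho_0]}$ is $(K,C)$ $A$-stable. Applying Proposition \ref{displacing} to these constants yields $J=J([\rho_0])\geqslant 1$ and $B=B([\rho_0])\geqslant 0$ such that every $[\rho]\in U_{[\rho_0]}$ is $(J,B)$ $A$-well-displacing.

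Next, the family $\{U_{[\rho_0]}\}_{[\rho_0]\in R}$ is an open cover of the compact set $R$, so I can extract a finite subcover $U_{[\rho_1]},\dots,U_{[\rho_n]}$, with corresponding well-displacing constants $(J_1,B_1),\dots,(J_n,B_n)$. Setting
\[
J_R:=\max_{1\leqslant i\leqslant n} J_i \quad\text{and}\quad B_R:=\max_{1\leqslant i\leqslant n} B_i,
\]
any $[\rho]\in R$ lies in some $U_{[\rho_i]}$ and is therefore $(J_i,B_i)$ $A$-well-displacing, hence a fortiori $(J_R,B_R)$ $A$-well-displacing, which is the desired conclusion.

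There is essentially no obstacle here: the argument is a textbook compactness extraction, and the only subtlety worth double-checking is that the well-definedness of the $A$-well-displacing inequality depends only on the conjugacy class of $\rho$ (translation length is a conjugacy invariant in $G$), so the constants $J_R$, $B_R$ can indeed be attached to elements of $X(\Gamma,G)$ rather than to specific representatives.
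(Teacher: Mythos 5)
Your argument is correct and follows exactly the same route as the paper: apply Proposition \ref{ouvert} to get uniform $A$-stability constants on a neighbourhood of each point, upgrade to uniform $A$-well-displacing constants via Proposition \ref{displacing}, extract a finite subcover by compactness, and take the maximum of the resulting constants. Your closing remark about conjugacy invariance of translation length is a sensible sanity check, though the paper does not spell it out.
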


\begin{proof}
Proposition \ref{ouvert} implies that for every representation $\rho\in R$, there are $U_{\rho}$ an open neighbourhood of $\rho$ in $X(\Gamma,G)$, $K_{\rho}\geqslant 1$ and $C_{\rho}\geqslant 0$  such that every representation in $U_{\rho}$ is ($K_\rho, C_\rho$) $A$-stable. Proposition \ref{displacing} now gives us $J_\rho$ and $B_\rho$ such that for every $\sigma\in U_{\rho}$ and all $a\in A$, 
$$\frac{1}{J_\rho}l_S(a)-B_\rho \leqslant l(\sigma(a)) \leqslant J_\rho l_S(a)+B_\rho,~~~~ \forall \sigma\in U_{\rho},~~ \forall a\in A. $$

Since $R$ is compact, we can consider a finite subcover of $(U_\rho)_{\rho\in R}$ : there are $\rho_1,...,\rho_k \in R$ such that $R\subset \bigcup\limits_{i=1}^{k} U_{\rho_i} $. We take $J_R=\max \{J_{\rho_1},...,J_{\rho_k}\}$ and $B_R=\max \{B_{\rho_1},...,B_{\rho_k}\}$.
\end{proof}

\subsection{Dynamics of $Out(\Gamma)$ on $S_A(\Gamma,G)$}

In order to avoid triviality, we now assume that $\Gamma$ is a hyperbolic group such that $Out(\Gamma)$ is infinite.

\begin{defi}\label{wt}
Let $A \subset \Gamma$, we say that $A$ is \textit{testable} if $A$ is invariant by automorphisms of $\Gamma$ and if, for all sequences $(\phi_n)$ of distinct elements of $Out(\Gamma)$, there is an element $a\in A$ such that $\limsup_n ~l_S(\phi_n(a)) = \infty$.
\end{defi}

We note that this property does not depend on the choice of a set of generators $S$.

\begin{thm}\label{auto-test}
If $A\subset \Gamma$ is testable, then $S_A(\Gamma,G)$ is an open set of discontinuity of $X(\Gamma,G)$ for the action of $Out(\Gamma)$.
\end{thm}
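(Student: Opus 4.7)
Proposition \ref{ouvert} already tells us that $S_A(\Gamma,G)$ is open and $Out(\Gamma)$-invariant, so the only content is proper discontinuity. The plan is a contradiction argument: fix a compact set $R \subset S_A(\Gamma,G)$ and suppose that $\{\phi \in Out(\Gamma) \mid \phi(R)\cap R \neq \emptyset\}$ is infinite, so that we can extract a sequence $(\phi_n)$ of pairwise distinct elements of $Out(\Gamma)$ with $\phi_n(R)\cap R \neq \emptyset$ for every $n$. For each $n$, choose $[\rho_n]\in R$ such that $\phi_n\cdot[\rho_n] = [\rho_n\circ\phi_n^{-1}]\in R$ as well.

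The key tool is Corollary \ref{displacing compact}: since $R$ is compact and contained in $S_A(\Gamma,G)$, there exist uniform constants $J\geqslant 1$ and $B\geqslant 0$ such that every representation whose conjugacy class lies in $R$ is $(J,B)$ $A$-well-displacing. Applied to $\rho_n$, we get the lower bound $\tfrac{1}{J}l_S(b)-B \leqslant l(\rho_n(b))$ for all $b\in A$; applied to $\rho_n\circ\phi_n^{-1}$, we get the upper bound $l(\rho_n\circ\phi_n^{-1}(a))\leqslant J l_S(a)+B$ for all $a\in A$. Now fix an arbitrary $a\in A$ and plug $b=\phi_n^{-1}(a)$ into the first inequality; since $A$ is $Aut(\Gamma)$-invariant, $b$ does lie in $A$, and the translation length is a conjugacy invariant, so $l(\rho_n(\phi_n^{-1}(a)))=l((\rho_n\circ\phi_n^{-1})(a))$. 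Combining both inequalities yields
\begin{equation*}
l_S(\phi_n^{-1}(a)) \leqslant J^2 l_S(a) + 2JB,
\end{equation*}
a bound depending on $a$ but not on $n$.

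The sequence $(\phi_n^{-1})$ consists of pairwise distinct elements of $Out(\Gamma)$, and the previous display shows that for \emph{every} $a\in A$, the sequence $l_S(\phi_n^{-1}(a))$ is bounded, hence has finite $\limsup$. This directly contradicts the testability hypothesis on $A$, which provides some $a\in A$ with $\limsup_n l_S(\phi_n^{-1}(a))=\infty$. This contradiction forces $\{\phi\in Out(\Gamma)\mid \phi(R)\cap R\neq\emptyset\}$ to be finite, so the action of $Out(\Gamma)$ on $S_A(\Gamma,G)$ is properly discontinuous.

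The main point to handle carefully is not a real obstacle but a bookkeeping issue: one must keep track of whether the hypothesis is applied to $\rho_n$ or to $\rho_n\circ\phi_n^{-1}$, use that translation lengths descend to conjugacy classes (so the bounds coming from $R$ depend only on $[\rho_n]$ and $[\rho_n\circ\phi_n^{-1}]$), and use $Aut(\Gamma)$-invariance of $A$ to legitimately substitute $\phi_n^{-1}(a)$ into the well-displacing inequality. Once this is arranged, the sequence one feeds into the testability condition is $(\phi_n^{-1})$ rather than $(\phi_n)$, which is harmless since inversion permutes any infinite set of distinct elements of $Out(\Gamma)$ to another such set.
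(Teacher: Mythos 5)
Your argument is correct and is essentially the same as the paper's: a contradiction via Corollary \ref{displacing compact}, the two-sided well-displacing inequality applied to $\rho_n$ and $\rho_n\circ\phi_n^{-1}$, and testability applied to the sequence $(\phi_n^{-1})$. The only difference is cosmetic — you chain the two inequalities into the single bound $l_S(\phi_n^{-1}(a))\leqslant J^2 l_S(a)+2JB$ before invoking testability, whereas the paper reads the contradiction off the combined inequality directly.
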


\begin{proof}
By contradiction, we suppose that there are a compact subset $R$ of $S_A(\Gamma,G)$ and a sequence $(\phi_n)_n$ of distinct elements of $Out(\Gamma)$ such that $ \phi_n(R) \cap R \ne \emptyset $ for infinitely many $n\in\mathbb{N}$. Wlog, suppose that $ \phi_n(R) \cap R \ne \emptyset $ for all $n\in \N$. There is a sequence $(\rho_n)_n$ of elements of $R$ such that $\phi_n \cdot \rho_n =\rho_n \circ \phi_n^{-1} \in R$ for all $n\in \N$. We write $\rho'_n:= \rho_n \circ \phi_n^{-1}$ this other sequence of elements of $R$.
Since $A$ is testable, there is $a\in A$ such that $\limsup_n ~l_S(\phi_n^{-1}(a))=\infty$. Let $J_R$ and $B_R$ be the constants obtained by Corollary \ref{displacing compact}, we have :
$$ \frac{1}{J_R}l_S(\phi_n^{-1}(a))-B_R \leqslant l(\rho_n (\phi_n^{-1}(a)))=l(\rho'_n(a)) \leqslant J_Rl_S(a)+B_R \qquad \forall n \in \N$$
We get a contradiction since $\limsup_n~ l_S(\phi_n^{-1}(a))=\infty$.
\end{proof}

\subsection{When $A$ is a characteristic subgroup}

Recall that $\Gamma$ denotes a hyperbolic group, we now make the additional assumption that $\Gamma$ is non-elementary.

We aim at proving Theorem \ref{Awd} whose proof is similar to that of Proposition \ref{wd} which can be found in \cite{Well-displacing}. We start by mentioning one definition and two lemmas from \cite[Section 4]{Well-displacing}.

\begin{defi}
Let $S$ be a finite set of generators for $\Gamma$, $\delta_S$ be the hyperbolicity constant of $C_S(\Gamma)$ and $u,v\in \Gamma$. We say that $(u,v)$ is a ping-pong pair if :
\begin{enumerate}
\item $\min(|u|_S,|v|_S)\geqslant 100\delta_S$
\item $\langle u^{\pm 1},v^{\pm 1} \rangle_e \leqslant \frac{1}{2}\min(|u|_S,|v|_S) -20\delta_S$
\item $\langle u, u^{-1} \rangle_e \leqslant \frac{1}{2}|u|_S -20\delta_S$ and $\langle v, v^{-1} \rangle_e \leqslant \frac{1}{2}|v|_S -20\delta_S$
\end{enumerate}

where $|.|_S=d_S(\mathrm{e},.)$ denotes the word length with respect to $S$, and  $\langle .,. \rangle_e $ denotes the Gromov product based at $\mathrm{e}$.
\end{defi}

\begin{rems}
A ping-pong pair generates a free subgroup of rank two (see \cite{Delzant} for a proof).\\
\end{rems}

Recall that if $\gamma \in \Gamma$ has infinite order, then $\gamma^-$ and $\gamma^+$ denote its distinct fixed points in $\partial C_S(\Gamma)$.

\begin{lem}\label{L1}
If $\gamma_0\in \Gamma$ and $h\in \Gamma$ are two elements of infinite order and $\{h^-,h^+\}\cap \{\gamma_0^-,\gamma_0^+\}=\emptyset$, then there is $N\in \mathbb{N}$ such that $(\gamma_0^N, h \gamma_0^{-N} h^{-1})$ is a ping-pong pair.
\end{lem}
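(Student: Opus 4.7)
\emph{Plan.} Set $u_N := \gamma_0^N$ and $v_N := h\gamma_0^{-N}h^{-1}$; the goal is to verify the three conditions defining a ping-pong pair once $N$ is sufficiently large. Condition (1) is immediate: $\gamma_0$ has infinite order in the hyperbolic group $\Gamma$, hence acts hyperbolically on $C_S(\Gamma)$ with $l_S(\gamma_0)>0$, and the same holds for its conjugate $h\gamma_0 h^{-1}$. Therefore both $|u_N|_S$ and $|v_N|_S$ equal $N\,l_S(\gamma_0)+O(1)$ and exceed $100\delta_S$ once $N$ is large.

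For (2) and (3), the strategy is to uniformly bound the six Gromov products appearing in those conditions, while the right-hand sides $\tfrac12\min(|u_N|_S,|v_N|_S)-20\delta_S$ diverge linearly. First I would locate the four limits in the Gromov boundary $\partial C_S(\Gamma)$. By Lemma \ref{l1}, $u_N$ lies on a uniform quasi-axis of $\gamma_0$, so $u_N\to\gamma_0^+$ and $u_N^{-1}\to\gamma_0^-$. Left multiplication by $h$ is an isometry of $C_S(\Gamma)$ and extends to a homeomorphism of $\overline{C_S(\Gamma)}$ acting on $\partial C_S(\Gamma)$ as $h\cdot$, so $h\gamma_0^{-N}\to h\cdot\gamma_0^-$; right multiplication by $h^{-1}$ moves vertices by at most $|h|_S$, so also $v_N\to h\cdot\gamma_0^-$ and symmetrically $v_N^{-1}\to h\cdot\gamma_0^+$.

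The crux is showing that these four limits $\gamma_0^+,\gamma_0^-,h\cdot\gamma_0^+,h\cdot\gamma_0^-$ are pairwise distinct. If $h\cdot\gamma_0^\epsilon=\gamma_0^\epsilon$ for some sign $\epsilon$, then $\gamma_0^\epsilon$ is a fixed point of $h$, so $\gamma_0^\epsilon\in\{h^+,h^-\}$, contradicting the hypothesis. The subtler case is $h\cdot\gamma_0^+=\gamma_0^-$ (or its mirror): here $\gamma_0$ and $h\gamma_0 h^{-1}$ would be two hyperbolic elements of $\Gamma$ sharing the boundary fixed point $\gamma_0^-$. Since in a non-elementary hyperbolic group the stabilizer of a boundary point is virtually cyclic, two hyperbolic elements sharing one fixed point must share both; this forces $h\cdot\gamma_0^-=\gamma_0^+$ as well, so $h$ swaps $\gamma_0^\pm$, $h^2$ fixes both, and $\{h^+,h^-\}=\{\gamma_0^+,\gamma_0^-\}$, again contradicting the hypothesis.

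Once the four limits are pairwise distinct, I would invoke the standard fact that in a $\delta$-hyperbolic geodesic space, sequences converging to two different Gromov boundary points have uniformly bounded Gromov product at any fixed basepoint (this is the characterization of boundary convergence via Gromov products). Applied to each of the six pairs drawn from $\{u_N,u_N^{-1},v_N,v_N^{-1}\}$, this produces a single constant $C_0$ bounding all the Gromov products appearing in conditions (2) and (3). Since $\tfrac12\min(|u_N|_S,|v_N|_S)-20\delta_S$ grows linearly in $N$, every required inequality holds once $N$ is large enough, and $(u_N,v_N)$ is a ping-pong pair. The main obstacle is the distinctness argument in the third paragraph, which is the only place the hypothesis $\{h^+,h^-\}\cap\{\gamma_0^+,\gamma_0^-\}=\emptyset$ is genuinely used.
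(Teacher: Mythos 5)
The paper does not prove this lemma: it is quoted verbatim from \cite[Section 4]{Well-displacing}, so there is no in-text proof to compare against. Your argument is the standard one and, as far as I can check, it is sound: identify the four boundary limits of $u_N^{\pm 1}$ and $v_N^{\pm 1}$, show they are pairwise distinct, and conclude that the six Gromov products in conditions (2) and (3) stay bounded while the right-hand sides grow linearly in $N$. The two nontrivial ingredients you invoke are both correct and standard for hyperbolic groups: (i) sequences converging to distinct points of $\partial C_S(\Gamma)$ have uniformly bounded Gromov product at a fixed basepoint, and (ii) two infinite-order elements whose boundary fixed-point sets meet must have equal fixed-point sets (equivalently, $E(\gamma_0)=\mathrm{Stab}(\{\gamma_0^-,\gamma_0^+\})$ is the unique maximal elementary subgroup containing $\gamma_0$); your reduction of the case $h\cdot\gamma_0^{+}=\gamma_0^{-}$ to $\{h^-,h^+\}=\{\gamma_0^-,\gamma_0^+\}$ via $h^2$ is clean. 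Two cosmetic points: the growth of $|\gamma_0^N|_S$ is governed by the stable norm $N_S(\gamma_0)>0$ rather than literally by $N\,l_S(\gamma_0)+O(1)$ (Proposition \ref{translation} makes this harmless), and for (ii) you only need the "equal or disjoint fixed-point sets" dichotomy, not the full statement about point stabilizers being virtually cyclic.
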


\begin{lem}\label{L2}
If $u,v\in \Gamma$ is a ping-pong pair, then there is some $\alpha\geqslant 0$ such that :
$$|\gamma|_S\leqslant 3 \max \{ l_S(\gamma), l_S(u\gamma), l_S(v\gamma) \} + \alpha~~~~ \forall \gamma\in \Gamma$$
\end{lem}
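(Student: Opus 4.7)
The plan is to bound $|\gamma|_S$ using the positions of quasi-axes of $\gamma, u\gamma, v\gamma$ in $C_S(\Gamma)$, exploiting the ping-pong configuration of $(u,v)$. The basic geometric inequality I would establish first is that for any $g \in \Gamma$ of infinite order with a $(K',C')$-quasi-axis $l_g$ from Lemma \ref{l1},
\begin{equation*}
|g|_S \leqslant l_S(g) + 2\, d_S(e, l_g) + O(\delta_S),
\end{equation*}
and moreover $d_S(e, l_g)$ is comparable, up to an additive constant depending only on $\delta_S, K', C'$, to the Gromov product $\langle g^-, g^+ \rangle_e$. This follows from a standard thin-triangle analysis of a geodesic from $e$ to $g\cdot e$ passing near the nearest point on $l_g$.

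The key step will be a dispersal claim: there exists $w \in \{e, u, v\}$ whose quasi-axis lies within a bounded distance of $e$, where the bound depends only on $\delta_S, |u|_S, |v|_S$. The ping-pong conditions imply that the four boundary points $u^\pm, v^\pm$ sit in four well-separated sectors of $\partial C_S(\Gamma)$ as seen from $e$, with pairwise Gromov products bounded above by $\frac{1}{2}\min(|u|_S, |v|_S) - 20\delta_S$. The endpoints $(w\gamma)^\pm$ are tied to these sectors via left-multiplication dynamics: if the endpoints of $\gamma$ are concentrated near some single point of $\partial C_S(\Gamma)$, then left-multiplying by $u$ (respectively $v$) moves these endpoints according to the hyperbolic isometry $u$ (resp.\ $v$), and the separation of $u^\pm, v^\pm$ forces at least one of the three resulting endpoint pairs to straddle distinct sectors, yielding a small Gromov product at $e$. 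I would carry out the case analysis explicitly over which sector (if any) contains $\gamma^\pm$, handling separately, via Proposition \ref{inf}, the possibility that $\gamma$, $u\gamma$, or $v\gamma$ is torsion---in which case $l_S$ is bounded uniformly by a constant depending only on $\Gamma$.

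With such a $w$ identified, the two previous steps give $|w\gamma|_S \leqslant l_S(w\gamma) + O(1)$, and combining with the triangle inequality $|\gamma|_S \leqslant |w\gamma|_S + |w|_S \leqslant |w\gamma|_S + \max(|u|_S, |v|_S)$ yields $|\gamma|_S \leqslant l_S(w\gamma) + \alpha$ for a suitable additive $\alpha$ absorbing all constants; the factor $3$ in the statement provides comfortable slack (one could in principle obtain a sharper factor). The main obstacle is precisely the dispersal step: tracking how left-multiplication by a ping-pong generator moves axis endpoints across sectors of $\partial C_S(\Gamma)$, and marrying this with the axis-to-Gromov-product translation in a manner uniform in $\gamma$.
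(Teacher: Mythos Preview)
The paper does not supply its own proof of this lemma; it is quoted from \cite[Section~4]{Well-displacing}, so I can only assess your proposal on its own merits.

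Your overall strategy---reduce to bounding $\langle (w\gamma)^{-},(w\gamma)^{+}\rangle_e$ (equivalently $(w\gamma\mid(w\gamma)^{-1})_e$ up to $O(\delta_S)$) for some $w\in\{e,u,v\}$---is the right one, but the dispersal step as you describe it fails on two counts. First, the boundary fixed points of $u\gamma$ are \emph{not} $u\cdot\gamma^{\pm}$; that formula describes the fixed points of the conjugate $u\gamma u^{-1}$, and the axis of a product $u\gamma$ bears no such simple relation to the axis of $\gamma$. Second, even granting your premise, the conclusion would not follow: if $\gamma^{+}$ and $\gamma^{-}$ both lie near a single boundary point $\xi$, then $u\cdot\gamma^{+}$ and $u\cdot\gamma^{-}$ both lie near the single point $u\cdot\xi$, so the pair still fails to ``straddle distinct sectors'' and its Gromov product at $e$ remains large. (Your aside on torsion has the same shape of problem: knowing $l_S(w\gamma)=0$ for a torsion element $w\gamma$ does nothing toward bounding $|\gamma|_S$.)

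The genuine mechanism is an asymmetry between the two \emph{vertices} $w\gamma$ and $(w\gamma)^{-1}=\gamma^{-1}w^{-1}$ in $C_S(\Gamma)$, not between boundary points. The latter lies within $|w|_S$ of $\gamma^{-1}$ and hence points in the same direction from $e$ as $\gamma^{-1}$; the former lies in the direction of $w$ from $e$ whenever $(w^{-1}\mid\gamma)_e$ is small, since the geodesic $[e,w\gamma]$ fellow-travels $[e,w]$ for length $|w|_S-(w^{-1}\mid\gamma)_e$. The ping-pong inequalities guarantee that the common initial direction of $[e,\gamma]$ and $[e,\gamma^{-1}]$ can coincide with at most one of the four sectors of $u^{\pm},v^{\pm}$, so one may choose $w\in\{u,v\}$ with both $(w\mid\gamma^{-1})_e$ and $(w^{-1}\mid\gamma)_e$ bounded, forcing $w\gamma$ and $(w\gamma)^{-1}$ into separated directions and hence $(w\gamma\mid(w\gamma)^{-1})_e$ bounded. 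That is the case analysis you should carry out.
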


We deduce next proposition :

\begin{prop}\label{P1}
Let $A$ be a subgroup of $\Gamma$ which contains a free subgroup of rank two.\\
There are $\alpha\geqslant 0$ and $a_1,a_2 \in A$ such that 
$$|\gamma|_S \leqslant 3 \max \{ l_S(\gamma), l_S(a_1\gamma), l_S(a_2\gamma) \} + \alpha ~~~~ \forall \gamma\in \Gamma$$
\end{prop}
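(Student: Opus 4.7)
The strategy is to extract, from the rank-two free subgroup hypothesis, a ping-pong pair whose entries lie in $A$, and then invoke Lemma \ref{L2} directly.

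\medskip

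First, since $A$ contains a free subgroup of rank two, I can pick two elements $f_1,f_2\in A$ generating a free subgroup of rank two in $\Gamma$. In particular $f_1$ and $f_2$ have infinite order. Moreover, their fixed point sets $\{f_1^-,f_1^+\}$ and $\{f_2^-,f_2^+\}$ on $\partial C_S(\Gamma)$ must be disjoint: in a hyperbolic group, two hyperbolic isometries sharing a point at infinity generate a virtually cyclic subgroup, which contradicts the freeness of $\langle f_1,f_2\rangle$.

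\medskip

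Next, with $\gamma_0:=f_1$ and $h:=f_2$, Lemma \ref{L1} provides an integer $N\in\mathbb{N}$ such that the pair $(u,v):=(f_1^{N},\,f_2 f_1^{-N} f_2^{-1})$ is a ping-pong pair in $\Gamma$. The crucial point is that both entries of this pair belong to $A$: indeed $f_1^N\in A$ since $f_1\in A$, and $f_2 f_1^{-N}f_2^{-1}\in A$ since $A$ is a subgroup containing $f_1$ and $f_2$.

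\medskip

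Finally, applying Lemma \ref{L2} to this ping-pong pair $(u,v)$ yields some $\alpha\geqslant 0$ such that
$$|\gamma|_S \;\leqslant\; 3\max\bigl\{l_S(\gamma),\,l_S(u\gamma),\,l_S(v\gamma)\bigr\}+\alpha \qquad \forall\,\gamma\in\Gamma.$$
Setting $a_1:=u=f_1^{N}\in A$ and $a_2:=v=f_2 f_1^{-N} f_2^{-1}\in A$ gives the desired conclusion. The main (and only) content is the observation above that the freeness of $\langle f_1,f_2\rangle$ forces disjoint boundary fixed points, so Lemma \ref{L1} applies; everything else is bookkeeping to ensure the ping-pong elements stay inside $A$.
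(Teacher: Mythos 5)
Your proposal is correct and follows exactly the same route as the paper: take a free pair $b_1,b_2\in A$, note their boundary fixed-point sets are disjoint, apply Lemma \ref{L1} to get the ping-pong pair $(b_1^N,\,b_2 b_1^{-N} b_2^{-1})$, which stays in $A$ because $A$ is a subgroup, and conclude with Lemma \ref{L2}. Your explicit justification of the disjointness of the fixed points (via virtual cyclicity) is a detail the paper leaves implicit, but the argument is the same.
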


\begin{proof}
Let $b_1,b_2\in A$ two elements of $A$ generating a free subgroup of rank 2. We have $\{b_1^-,b_1^+\}\cap \{b_2^-,b_2^+\}=\emptyset$, hence Lemma \ref{L1} gives an $N\in \mathbb{N}$ such that $(b_1^N, b_2 b_1^{-N} b_2^{-1})$ is a ping-pong pair. Take $a_1=b_1^N$, $a_2= b_2 b_1^{-N} b_2^{-1}$ and use Lemma \ref{L2}.
\end{proof}

We will also use the following lemma which is very close to Lemma \ref{l2}.

\begin{lem}\label{l3}
Let $\rho\in R(\Gamma,G)$ be a representation, let $x\in X$ be a basepoint, let $K,K'\geqslant 1$ and $C,C'\geqslant 0$ be some constants and let $l'$ be a $(K',C')$ quasi-geodesic of $C_S(\Gamma)$.\\
There are constants $K''\geqslant 1$ and $C''\geqslant 0$ such that, if $\Or(l')$ is a $(K,C)$ quasi-geodesic of $X$, then for any geodesic $l$ of $C_S(\Gamma)$ with the same endpoints than $l$, $\Or(l)$ is a $(K'',C'')$ quasi-geodesic of $X$.
\end{lem}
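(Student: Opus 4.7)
The plan is to run, in the reverse direction, the same Morse-stability argument that proves Lemma \ref{l2}. The two ingredients are: (i) the stability of quasi-geodesics in the Gromov-hyperbolic space $C_S(\Gamma)$, and (ii) the Lipschitz property of the orbit map $\Or$ (Remark \ref{lipschitz}).

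First I would invoke the Morse lemma in $C_S(\Gamma)$: since $C_S(\Gamma)$ is $\delta_S$-hyperbolic, there is a constant $H=H(\delta_S,K',C')$ such that any geodesic and any $(K',C')$ quasi-geodesic of $C_S(\Gamma)$ sharing the same pair of endpoints in $\overline{C_S(\Gamma)}$ lie within Hausdorff distance $H$ of one another. Parameterize $l$ by arc length as $(l_n)_{n\in I}$, and fix a $(K',C')$ quasi-isometric parameterization $(l'_k)_{k\in I'}$ of $l'$. For every $n\in I$, choose $\phi(n)\in I'$ with $d_S(l_n,l'_{\phi(n)})\leqslant H$. Writing $\kappa=\max_{s\in S}d(x,\Or(s))$, the orbit map is $\kappa$-Lipschitz, so $d(\Or(l_n),\Or(l'_{\phi(n)}))\leqslant \kappa H$.

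The upper bound of the quasi-geodesic estimate for $\Or(l)$ is then immediate from Lipschitzness: $d(\Or(l_n),\Or(l_m))\leqslant \kappa |n-m|$. For the lower bound, I would first use the triangle inequality twice to get $d_S(l'_{\phi(n)},l'_{\phi(m)})\geqslant |n-m|-2H$, then the upper quasi-isometry inequality for $l'$ to deduce
\[
|\phi(n)-\phi(m)|\geqslant \frac{|n-m|-2H-C'}{K'}.
\]
Combining the hypothesis that $\Or(l')$ is a $(K,C)$ quasi-geodesic of $X$ with the $\kappa H$-closeness in Step~1 yields
\[
d(\Or(l_n),\Or(l_m))\geqslant d(\Or(l'_{\phi(n)}),\Or(l'_{\phi(m)}))-2\kappa H\geqslant \frac{|\phi(n)-\phi(m)|}{K}-C-2\kappa H,
\]
and plugging in the previous bound gives the required inequality with $K''=KK'$ and an explicit $C''$ depending on $K,K',C,C',H,\kappa$.

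I do not expect a serious obstacle here: the argument is essentially the mirror image of Lemma \ref{l2}. The only mild technicality is that the correspondence $n\mapsto \phi(n)$ is not canonical, but any measurable (indeed, arbitrary) choice works since all estimates involve only $d_S(l_n,l'_{\phi(n)})\leqslant H$. One should also observe that the statement must hold uniformly in the choice of $l$ for a fixed $l'$: this is automatic because $H$, and hence $K''$ and $C''$, depend only on $\delta_S,K',C',K,C$ and $\kappa$, not on $l$ itself.
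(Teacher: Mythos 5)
Your argument is correct and is exactly the argument the paper has in mind: it gives no proof of Lemma \ref{l3}, merely remarking that it is ``very close to Lemma \ref{l2}'' (itself quoted from Lee), and your Morse-lemma-plus-Lipschitz reverse of that argument is the intended one, with all constants depending only on $\delta_S,K,K',C,C'$ and $\kappa$ as required. The only cosmetic adjustment is that the final constant should be $K''=\max(KK',\kappa)$ so that the upper bound $d(\Or(l_n),\Or(l_m))\leqslant \kappa|n-m|$ is also covered.
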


\begin{thm}\label{A-bien}
Let $\rho\in R(\Gamma,G)$ be a representation of a non-elementary hyperbolic group in an isometry group of a Gromov-hyperbolic, proper and geodesic metric space.\\
If $A$ is a characteristic subgroup, the representation $\rho$ is $A$-stable if and only if $\rho$ is $A$-well-displacing.
\end{thm}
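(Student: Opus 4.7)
The forward direction ($A$-stable $\Rightarrow$ $A$-well-displacing) is already Proposition \ref{displacing}. The plan is therefore to prove the reverse implication: assuming $\rho$ is $(J,B)$ $A$-well-displacing, produce uniform constants $(K,C)$ witnessing $A$-stability.

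The first reduction is to rule out the degenerate case. If $A$ contains no element of infinite order, then $L_S(A)=\emptyset$ and the statement is vacuous. Otherwise, $A$ is infinite, and since $A$ is characteristic (hence normal) in the non-elementary hyperbolic group $\Gamma$, a classical fact forces $A$ to be non-elementary: a normal infinite subgroup cannot be virtually cyclic, else $\Gamma$ would preserve its two-point limit set. Consequently $A$ contains a free subgroup of rank two, which is exactly the hypothesis needed to invoke Proposition \ref{P1}. This yields $\alpha\geqslant 0$ and $a_1,a_2\in A$ such that
\begin{equation*}
|\gamma|_S \leqslant 3\max\{ l_S(\gamma), l_S(a_1\gamma), l_S(a_2\gamma)\} + \alpha \qquad \forall \gamma\in \Gamma.
\end{equation*}

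The second step is to upgrade this into a linear control of $|\gamma|_S$ by the orbit displacement, but only for $\gamma\in A$ (this is the key use of $A$ being a subgroup, which ensures $a_1\gamma,a_2\gamma\in A$ and therefore that well-displacing applies to each of them). For $\gamma\in A$, using $l_S(\eta)\leqslant J\,l(\rho(\eta))+JB$ (well-displacing), then $l(\rho(\eta))\leqslant d(x,\rho(\eta)(x))$, and finally the triangle inequality
\begin{equation*}
d(x,\rho(a_i\gamma)(x))\leqslant d(x,\rho(a_i)(x))+d(x,\rho(\gamma)(x)),
\end{equation*}
one obtains constants $K_1\geqslant 1$ and $C_1\geqslant 0$ (depending on $J,B,\alpha,a_1,a_2,\rho$) with
\begin{equation*}
|\gamma|_S \leqslant K_1\, d(x,\rho(\gamma)(x)) + C_1 \qquad \forall \gamma\in A.
\end{equation*}
Combined with the standard Lipschitz upper bound (Remark \ref{lipschitz}), this says that $\tau_{\rho,x}$ restricted to $A$ is a quasi-isometric embedding of $(A,d_S)$ into $X$.

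The third step converts this into uniform quasi-geodesicity of orbit images of axes. Fix $a\in A_\infty$; after replacing $a$ by a conjugate (which stays in the characteristic $A$ and does not change $L_S(a)$ up to translation by an element of $\Gamma$, which preserves being a quasi-geodesic) we may assume $a$ has minimal word length in its conjugacy class, so that by Lemma \ref{l1} and the subsequent remark, $l_a:=\{a^n\}_{n\in\mathbb{Z}}$ is a $(K',C')$ quasi-axis for $a$ with $K',C'$ independent of $a$. For $m,n\in\mathbb{Z}$, applying the displayed estimate to $\gamma=a^{m-n}\in A$ yields
\begin{equation*}
\frac{1}{K_1K'}|n-m|-\frac{C'+C_1}{K_1}\leqslant d(\tau_{\rho,x}(a^n),\tau_{\rho,x}(a^m))\leqslant \kappa K'|n-m|+\kappa C',
\end{equation*}
where $\kappa$ is the Lipschitz constant of $\tau_{\rho,x}$. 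Thus $\tau_{\rho,x}(l_a)$ is a $(K_2,C_2)$ quasi-geodesic of $X$ with constants independent of $a$. Now Lemma \ref{l3} applied to the quasi-geodesic $l_a$ transfers this control to any genuine geodesic of $C_S(\Gamma)$ with the same endpoints $a^\pm$, i.e.\ to any element of $L_S(a)$, with uniform constants $(K,C)$. Since this holds for every $a\in A_\infty$ with constants independent of $a$, the representation $\rho$ is $(K,C)$ $A$-stable.

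The main obstacle is the second step: without $A=\Gamma$ one cannot apply the well-displacing bound to arbitrary $\gamma\in\Gamma$, and the argument only closes because $A$ is a \emph{subgroup} containing the ping-pong pair supplied by Proposition \ref{P1}. This is precisely where the assumption that $A$ be a characteristic \emph{subgroup}, rather than merely an $\mathrm{Aut}(\Gamma)$-invariant subset, enters in an essential way.
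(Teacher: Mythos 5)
Your proposal is correct and follows essentially the same route as the paper: invoke Proposition \ref{P1} via the free rank-two subgroup inside the characteristic subgroup $A$, use the well-displacing hypothesis on $a$, $a_1a$, $a_2a$ (all in $A$ because $A$ is a subgroup) to bound $|a|_S$ linearly by the orbit displacement, deduce uniform quasi-geodesicity of $\Or(\{a^n\})$ for conjugacy-minimal $a\in A_\infty$, and transfer to $L_S(A)$ via Lemma \ref{l3}. The only differences are that you spell out details the paper leaves implicit (the degenerate case $A_\infty=\emptyset$, why a normal infinite subgroup is non-elementary, and why passing to a conjugate is harmless), which is fine.
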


\begin{proof}
The direct implication is given by Proposition \ref{displacing}.\\
Following the proof of \cite[Lemma 4.0.4]{Well-displacing}, we consider an $A$-well-displacing representation $\rho\in R(\Gamma,G)$ : there are $J\geqslant 1$ and $B \geqslant 0$ such that 
$$\forall a\in A, ~~~~  \frac{1}{J}l_S(a) - B \leqslant l(\rho(a)) \leqslant Jl_S(a) + B$$\\

Specifically, for all $x\in X$ and for all $a\in A$, $d(x, \rho(a)(x)) \geqslant \frac{1}{J}l_S(a) - B$.\\
 
Since a non trivial characteristic subgroup of a non-elementary hyperbolic group always contains a free subgroup of rank 2, Lemma \ref{P1} gives the existence of elements $a_1,a_2 \in A$ and of a constant $\alpha\geqslant 0$ such that : 
$$\forall a\in A, ~~~~ \frac{1}{3}|a|_S -\frac{1}{3}\alpha \leqslant \max \{l_S(a),l_S(a_1a), l_S(a_2a) \}. $$

For all $x\in X$ and for all $a\in A$, we have :

\begin{equation*} 
\begin{split}
d(x, \rho(a)(x)) & \geqslant \max \{ d(x,\rho(a)(x)), d(x,\rho(a_1a)(x)), d(x,\rho(a_2a)(x)) \}- \max \{d(x,\rho(a_1)(x)), d(x,\rho(a_2)(x)) \} \\
 & \geqslant \max \{ l(\rho(a)), l(\rho(a_1a)), l(\rho(a_2a)) \} - \max \{d(x,\rho(a_1)(x)), d(x,\rho(a_2)(x)) \} \\
 & \geqslant \frac{1}{J} \max \{ l_S(a), l_S(a_1a), l_S(a_2a) \} - B - \max \{d(x,\rho(a_1)(x)), d(x,\rho(a_2)(x)) \} \\
 & \geqslant \frac{1}{3J} |a|_S -\frac{\alpha}{3J} - B - \max \{d(x,\rho(a_1)(x)), d(x,\rho(a_2)(x)) \}
\end{split}
\end{equation*}

For all $a\in A_{\infty}$ which have minimal word length in their conjugacy class, $l_a=\{a^n\}$ is a $(K',C')$ quasi-geodesic of $C_S(\Gamma)$ with constants $K'\geqslant 1$ and $C'\geqslant 0$ independent of $a$ according to Lemma \ref{l1}.\\
Using now the previous inequality and recalling that $\Or$ is Lipschitz, we can find uniform constants $K\geqslant 1$ and $C\geqslant0$ such that $\Or(l_a)$ is a $(K,C)$ quasi-geodesic for all $a\in A_\infty$.
Lemma \ref{l3} now implies that there are uniform constants $K''\geqslant 1$ and $C''\geqslant 0$ such that the image by $\Or$ of every geodesic of $L_S(A)$ is a $(K'',C'')$ quasi-geodesic of $X$.
\end{proof}

\subsection{Geometric interpretation when $G=PSL_2(\mathbb{C})$}

This section ends with a geometric interpretation due to Lee in \cite{Lee}, of discrete and faithful $A$-stable representations when $\Gamma$ is torsion-free and $G=PSL_2(\mathbb{C})$. This result will be used in Section 6 and may help some readers to be more familiar with those representations.\\

Let $\Gamma$ be a torsion-free hyperbolic group and $G=PSL_2(\mathbb{C})$ which is identified with the group of orientation-preserving isometries of the 3-dimensional hyperbolic space $\mathbb{H}^3$.

If $\rho\in R(\Gamma,G)$ and $a\in A$ are such that $l(\rho(a))>0$, then we denote by $L_\rho(a)$ the geodesic connecting $\rho(a)^-$ to $\rho(a)^+$ in $\mathbb{H}^3$.

\begin{defi}
We say that $\rho\in R(\Gamma,G)$ is \textit{$A$-convex cocompact} if 
\begin{enumerate}
\item $\rho(a)$ is hyperbolic for all $a\in A\setminus \{\mathrm{e}\}$,
\item There is a $\rho(\Gamma)$-invariant subset $\Omega\subset \mathbb{H}^3$ such that $\Omega \diagup \rho(\Gamma)$ is compact and $L_\rho(A):=\bigcup_{a\in A\setminus \{\mathrm{e}\}} L_\rho(a)$ is contained in $\Omega$.
\end{enumerate}
\end{defi}

The following equivalence is due to Lee in \cite[Lemma III.10]{Lee} :

\begin{prop}\label{A-cc}
Let $\rho\in R(\Gamma,G)$ be a discrete and faithful representation.
The representation $\rho$ is $A$-stable if and only if $\rho$ is $A$-convex cocompact.
\end{prop}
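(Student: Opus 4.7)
The plan is to handle each implication separately. Forward direction ($A$-stable implies $A$-convex cocompact): since $\Gamma$ is torsion-free, every $a \in A \setminus \{\mathrm{e}\}$ has infinite order, so $A$-stability gives that $\Or(l)$ is a bi-infinite $(K,C)$-quasi-geodesic of $\h$ for each $l \in L_S(a)$. Such a quasi-geodesic has two distinct endpoints in $\partial \h$, which by $\rho(a)$-equivariance are the fixed points of $\rho(a)$, so $\rho(a)$ is hyperbolic with axis $L_\rho(a)$ joining them. The Morse lemma provides a uniform Hausdorff distance bound $D$ between $\Or(l)$ and $L_\rho(a)$, and since $\Or$ is $\kappa$-Lipschitz (Remark~\ref{lipschitz}), $L_\rho(A)$ lies inside $\Omega := \overline{\mathcal{N}_{D+\kappa}(\rho(\Gamma)\cdot x)}$. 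This $\Omega$ is closed, $\rho(\Gamma)$-invariant, contains $L_\rho(A)$, and sits inside $\rho(\Gamma) \cdot \overline{B(x, D+\kappa)}$ with the ball compact (since $\h$ is proper), so $\Omega/\rho(\Gamma)$ is compact.

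Reverse direction ($A$-convex cocompact implies $A$-stable): fix a compact $K \subset \Omega$ with $\rho(\Gamma)K = \Omega$, take $x \in K$, and set $D_0 := \mathrm{diam}(K)$. The identity $\Or(\beta l) = \rho(\beta)\Or(l)$ is an isometry, so the quasi-geodesic property for $\Or$ on $L_S(a)$ (with uniform constants) is preserved under replacing $a$ by any conjugate; thus we may assume $L_\rho(a) \cap K \neq \emptyset$ and pick $y_0 \in L_\rho(a) \cap K$. Since $\rho(a)^n x$ tracks $\rho(a)^n y_0 \in L_\rho(a)$ within $D_0$, one has
\[ \bigl| d(\rho(a)^n x, \rho(a)^m x) - |n-m|\, l(\rho(a)) \bigr| \leq 2 D_0. \]
Using Lemmas~\ref{l1} and~\ref{l2}, establishing $A$-stability reduces to showing $\rho$ is $A$-well-displacing, and the upper bound $l(\rho(a)) \leq \kappa\, l_S(a) + O(\delta)$ follows from Lipschitzness of $\Or$ and Proposition~\ref{translation}.

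The crux and main obstacle is the lower bound $l_S(a) \leq J\, l(\rho(a)) + B$ with constants independent of $a$: without full quasi-convexity of $\rho$, the orbit map on all of $\Gamma$ is not a quasi-isometric embedding. The workaround is a Svarc-Milnor discretization performed along the axis $L_\rho(a) \subset \Omega$. Fixing $R > D_0$, choose $z_0 = y_0, z_1, \ldots, z_N = \rho(a)^n y_0$ on $L_\rho(a)$ with $d(z_i, z_{i+1}) \leq R$ and $N = \lceil n\, l(\rho(a))/R \rceil$. Cocompactness supplies $\alpha_i \in \Gamma$ with $d(z_i, \rho(\alpha_i)x) \leq D_0$; proper discontinuity of $\rho(\Gamma)$ on $\h$ bounds $|\alpha_i^{-1}\alpha_{i+1}|_S$ by a uniform $M$. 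A telescoping argument, together with an endpoint adjustment recovering $a^n$ from $\alpha_N$ up to bounded word length (again by properness), gives $|a^n|_S \leq (M/R)\, n\, l(\rho(a)) + O(1)$. Dividing by $n$ and invoking Proposition~\ref{translation} yields the desired $l_S(a) \leq (M/R)\, l(\rho(a)) + 16\delta_S$, with constants depending only on $\rho$, $x$, $K$ and $R$.
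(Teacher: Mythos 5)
The paper does not actually prove this proposition: it is quoted from Lee's thesis (\cite[Lemma III.10]{Lee}), and the remark immediately following it warns that Lee's argument relies on the tameness of hyperbolic $3$-manifolds. Your forward direction ($A$-stable implies $A$-convex cocompact) is correct and is indeed the elementary half: the Morse lemma places each axis $L_\rho(a)$ in a uniform neighbourhood of the orbit $\rho(\Gamma)\cdot x$, and $\Omega=\rho(\Gamma)\cdot\overline{B(x,D+\kappa)}$ is closed, invariant, and cocompact.

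The reverse direction has a genuine gap, and it sits exactly at the sentence ``Using Lemmas \ref{l1} and \ref{l2}, establishing $A$-stability reduces to showing $\rho$ is $A$-well-displacing.'' No such reduction is available for a general $Aut(\Gamma)$-invariant subset $A$. Your discretization along $L_\rho(a)$ is correct as far as it goes, and is a clean proof that $A$-convex cocompact implies $A$-well-displacing; but what it controls is the displacement $d(x,\rho(a^n)(x))$ at the net of points $\{a^n\}$, which are $l_S(a)$-separated along the quasi-axis $l_a$, with $l_S(a)$ unbounded over $a\in A$. $A$-stability requires a uniform lower quasi-geodesic bound for $\Or$ at \emph{every} pair of points of a geodesic in $L_S(a)$, in particular on the long subsegments between $a^n$ and $a^{n+1}$, of length $l_S(a)$, where translation-length data gives no control whatsoever: the orbit map could make arbitrarily deep detours there. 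Lemmas \ref{l2} and \ref{l3} only transfer quasi-geodesicity of the image between two full paths with the same endpoints; they do not bootstrap it from a coarse net. The paper itself records that the implication from $A$-well-displacing to $A$-stable is unknown even for the set $\p$ of primitive elements of $F_k$, and establishes it only when $A$ is a characteristic \emph{subgroup} (Theorem \ref{A-bien}), via a ping-pong pair chosen inside $A$ --- an argument unavailable for a general invariant subset. A further warning sign: your argument uses nothing specific to $\mathbb{H}^3$ and would prove the statement for any Gromov-hyperbolic target, whereas the paper's remark explicitly cautions that Lee's proof, which goes through tameness of hyperbolic $3$-manifolds, need not extend beyond dimension $3$.
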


\begin{rem}
It is not hard to show that a representation is $\Gamma$-convex cocompact if and only if it is convex cocompact.\\
As Lee's proof uses the tameness property of hyperbolic 3-manifolds, this need not extend to higher dimensional settings. 
\end{rem}

\section{Actions on $\mathbb{R}$-trees and test subsets}

In this section, $\Gamma$ will denote a hyperbolic group such that its outer automorphism group $Out(\Gamma)$ is infinite and $S$ will denote a finite set of generators for $\Gamma$.\\

Before giving examples of $A$-stable representations, we focus on certain subsets of $\Gamma$ and ask for their testability (see Definition \ref{wt}). In order to do so, we recall some basic facts about $\R$-trees and isometric actions on $\R$ trees and use a theorem of Paulin in \cite{Paulin2} that asserts that if $Out(\Gamma)$ is infinite, then $\Gamma$ acts isometrically on an $\R$-tree without global fixed points. We take advantage of this new approach to prove Theorem \ref{Can}.

\subsection{$\mathbb{R}$-trees}

We refer the reader to \cite{CullerMorgan} or \cite{Paulin} for a more detailed introduction on $\mathbb{R}$-trees.

\begin{defi}
Let $(X,d)$ be a metric space and let $x,y\in X$. An \textit{arc} from $x$ to $y$ is the image of a topological embedding $\alpha: [a,b] \rightarrow X$ of a closed interval $[a,b]$ such that $\alpha(a)=x$ and $\alpha(y)=b$.
\end{defi}

\begin{defi}
A \textit{$\mathbb{R}$-tree} is a non-empty metric space $(T,d)$ such that for every $x,y\in T$, there is a unique arc connecting $x$ to $y$ and this arc is a geodesic segment.\\
We say that $T' \subset T$ is a \textit{subtree} if $T'\ne \emptyset$ and $T'$ is connected.
\end{defi}

It is a straightforward exercise to show that $\mathbb{R}$-trees are $0$-hyperbolic metric spaces.
\\

Let $G=Isom(T)$ be the isometry group of an $\mathbb{R}$-tree $T$. We denote by $l_T(g)=l(g)$ the translation length of $g\in G$.\\

The \textit{characteristic set of $g$} is defined as the set

$$C_g = \{x\in T ~|~ d(x,g(x))=l_T(g)\}$$

If $l_T(g)=0$, then $g$ is elliptic and $C_g$ is the set of fixed points of $g$.\\
If $l_T(g)>0$, then $g$ is hyperbolic, $C_g$ is isometric to $\mathbb{R}$ and is called the \textit{axis} of $g$. Furthermore, $g$ acts on $C_g$ by translation of length $l_T(g)$.\\

Proofs of the previous and following statements can be found in \cite[Section 1]{CullerMorgan}.

\begin{prop}\label{cha set}
For all $g,h\in G$, the following holds :

\begin{itemize}
\item $C_g$ is a closed subtree of $T$, invariant by the action of $g$.
\item $C_{g^{-1}}=C_g$
\item $C_{ghg^{-1}}= g(C_h)$
\end{itemize}
\end{prop}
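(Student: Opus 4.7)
The plan is to handle the three bullets separately, reducing each to properties of the displacement function $\phi_g(x) := d(x, g(x))$. First I would observe that $\phi_g$ is continuous (indeed $2$-Lipschitz, since $|\phi_g(x) - \phi_g(y)| \leq d(x,y) + d(g(x), g(y)) = 2d(x,y)$). Since $C_g = \phi_g^{-1}(\{l_T(g)\})$ is the preimage of a single value of a continuous function, and $\phi_g \geq l_T(g)$ everywhere by definition of translation length, $C_g$ is closed. For $g$-invariance, if $x \in C_g$ then applying the isometry $g$ yields $d(g(x), g^2(x)) = d(x, g(x)) = l_T(g)$, so $g(x) \in C_g$. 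Non-emptiness of $C_g$ in the elliptic case follows from the standard fact that isometries of an $\R$-tree with zero translation length have a fixed point (for instance, iterating the midpoint construction on $[x, g(x)]$ and using that nested bounded subtrees of an $\R$-tree have non-empty intersection); in the hyperbolic case, one constructs the axis explicitly by following the forward orbit of a near-minimizer.

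The main point to prove is that $C_g$ is connected, hence a subtree. Let $x, y \in C_g$; I want to show the geodesic arc $[x,y]$ is contained in $C_g$. The cleanest route is to establish that on any $\R$-tree the displacement function $\phi_g$ is convex along geodesics, meaning $\phi_g(z) \leq \max(\phi_g(x), \phi_g(y))$ for $z \in [x,y]$. The argument is a case analysis using the branching structure: one decomposes the union $[x,y] \cup [x, g(x)] \cup [y, g(y)] \cup [g(x), g(y)]$ into its constituent segments at their branch points, uses the fact that $g([x,y]) = [g(x), g(y)]$ since $g$ is an isometry, and examines how the bridge from $[x,y]$ to its image $[g(x),g(y)]$ meets the tree. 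Since $\phi_g(x) = \phi_g(y) = l_T(g)$ already realizes the global minimum of $\phi_g$, the convexity inequality forces $\phi_g \equiv l_T(g)$ on $[x,y]$, giving $[x,y] \subset C_g$.

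The last two bullets are then formal. For $C_{g^{-1}} = C_g$, note that $l_T(g^{-1}) = l_T(g)$ and, applying the isometry $g$ to the pair $(g^{-1}(x), x)$, one gets $d(x, g^{-1}(x)) = d(g(x), x) = \phi_g(x)$, so $\phi_{g^{-1}} = \phi_g$ and the level sets agree. For $C_{ghg^{-1}} = g(C_h)$, observe that $l_T(ghg^{-1}) = l_T(h)$ and, using again that $g$ is an isometry, $\phi_{ghg^{-1}}(g(y)) = d(g(y), ghg^{-1}(g(y))) = d(g(y), gh(y)) = d(y, h(y)) = \phi_h(y)$, so $g(y) \in C_{ghg^{-1}}$ iff $y \in C_h$. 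The hard part of the whole argument is the convexity/connectedness step: the case analysis for how the four segments $[x,y]$, $[g(x), g(y)]$, $[x, g(x)]$, $[y, g(y)]$ sit inside the tree is what truly uses the $\R$-tree hypothesis, and this is where invoking a reference such as \cite{CullerMorgan} genuinely saves work.
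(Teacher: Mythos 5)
The paper does not prove this proposition at all: it defers entirely to \cite[Section 1]{CullerMorgan}, and your displacement-function strategy is essentially the argument found there, so in spirit you are reconstructing the cited proof rather than diverging from the paper. The formal parts of your write-up are correct: closedness of $C_g$ from continuity of $\phi_g$, $g$-invariance from $g$ being an isometry, and the identities $\phi_{g^{-1}}=\phi_g$ and $\phi_{ghg^{-1}}\circ g=\phi_h$ (together with $l_T(g^{-1})=l_T(g)$ and $l_T(ghg^{-1})=l_T(h)$, which follow by taking infima) do give the last two bullets immediately.

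Two caveats on the first bullet. First, your justification of non-emptiness in the elliptic case is not right as stated: nested bounded (even closed) subtrees of an $\mathbb{R}$-tree can have empty intersection when the tree is not complete (e.g.\ the subtrees $[1-\tfrac{1}{n},1)$ of $T=[0,1)$), and the paper's definition of an $\mathbb{R}$-tree does not assume completeness. The standard argument needs no limit process: for any $x$, compare the overlap $[gx,p]=[x,gx]\cap[gx,g^2x]$ with $\tfrac{1}{2}d(x,gx)$; if the overlap is at least half, the midpoint of $[x,gx]$ is fixed by $g$, and otherwise $g$ translates along a segment of $[x,gx]\cup[gx,g^2x]$ by $d(x,gx)-2d(gx,p)$, which leads both to non-emptiness of $C_g$ and to the identity $d(x,gx)=l_T(g)+2d(x,C_g)$ of \cite[1.3]{CullerMorgan}. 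Second, the connectedness step — the inequality $\phi_g(z)\leqslant\max(\phi_g(x),\phi_g(y))$ for $z\in[x,y]$ — is indeed the crux, and you only sketch the case analysis; note that it drops out for free from the identity just quoted, since $z\mapsto d(z,C_g)$ restricted to a geodesic segment attains its maximum at an endpoint once $C_g$ is known to be a subtree, or alternatively from convexity of displacement functions in CAT(0) spaces, of which $\mathbb{R}$-trees are examples. Either way, if you intend the proof to be self-contained you should prove the overlap/midpoint dichotomy once and derive everything from it, which is exactly the economy of the Culler--Morgan treatment.
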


We now give two propositions which will be useful and frequently used in what follows (see \cite[ Proposition 1.2]{CullerMorgan} and \cite[Proposition 1.8]{Paulin} respectively).

\begin{prop}\label{tree}
If $T_1,T_2,T_3$ are three closed subtrees of an $\mathbb{R}$-tree $T$ such that $T_i\cap T_j$ is not empty for all $1 \leqslant i,j \leqslant 3$, then $T_1\cap T_2 \cap T_3$ is not empty.
\end{prop}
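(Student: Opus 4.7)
The plan is to use the Helly-type / tripod structure of geodesic triangles in an $\mathbb{R}$-tree.

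First I would pick witness points $x_{12}\in T_1\cap T_2$, $x_{13}\in T_1\cap T_3$, and $x_{23}\in T_2\cap T_3$, which exist by hypothesis. The goal will be to show that the median of these three points (in the $\mathbb{R}$-tree sense) lies in $T_1\cap T_2\cap T_3$.

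The first key step is to observe that each $T_k$, being a non-empty connected subset of $T$, is automatically geodesically convex: if $x,y\in T_k$ and $z$ were a point of the unique arc $[x,y]$ in $T$ not lying in $T_k$, then $T\setminus\{z\}$ splits into two components separating $x$ from $y$, and $T_k\subset T\setminus\{z\}$ would meet both, contradicting connectedness. Consequently the geodesic segments $[x_{12},x_{13}]$, $[x_{12},x_{23}]$, $[x_{13},x_{23}]$ lie respectively in $T_1$, $T_2$, $T_3$.

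The second key step is the tripod property of $\mathbb{R}$-trees: given three points $a,b,c$ in $T$, there exists a unique point $m$ (the median) satisfying
\[
m\in [a,b]\cap [b,c]\cap [a,c].
\]
One can produce $m$ explicitly by considering, for instance, the intersection $[x_{12},x_{13}]\cap [x_{12},x_{23}]$. Both are arcs starting at $x_{12}$; since arcs in an $\mathbb{R}$-tree are geodesic and unique, their intersection is an initial sub-arc $[x_{12},m]$, and one checks that $m$ then also lies on $[x_{13},x_{23}]$ (using uniqueness of arcs and the fact that the concatenation $[x_{13},m]\cup [m,x_{23}]$ is an arc from $x_{13}$ to $x_{23}$). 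Applying this to the triple $(x_{12},x_{13},x_{23})$ and combining with the previous step gives $m\in T_1\cap T_2\cap T_3$, as desired.

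The main (mild) obstacle is the tripod/median fact, which is really the content of the proposition; it is standard for $\mathbb{R}$-trees but merits a short verification using only the axioms (unique arcs are geodesic segments). Everything else is a straightforward consequence of the geodesic convexity of connected subsets of an $\mathbb{R}$-tree.
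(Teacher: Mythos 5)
Your argument is correct: the convexity of connected subsets of an $\mathbb{R}$-tree plus the median (tripod) point of the three witness points $x_{12},x_{13},x_{23}$ does yield a point of $T_1\cap T_2\cap T_3$, and this is essentially the standard proof of \cite[Proposition 1.2]{CullerMorgan}, which is all the paper itself offers (it cites that reference rather than proving the statement). One small remark: your argument never uses that the $T_i$ are closed, which is fine --- for finitely many subtrees the Helly property holds without closedness.
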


\begin{rem}
By induction, if $T_1,...,T_k$ are $k$ closed subtrees of an $\mathbb{R}$-tree $T$ such that $T_i\cap T_j$ is not empty for all $1 \leqslant i,j \leqslant k$, then $T_1\cap ... \cap T_k$ is not empty.
\end{rem}

\begin{prop}\label{elliptique}
If $g,h$ are two elliptic isometries of an $\mathbb{R}$-tree $(T,d)$, then $l_T(gh)=2d(C_g,C_h)$.\\
In particular, if $g$, $h$ and $gh$ are elliptic, then $C_g\cap C_h\ne \emptyset$.
\end{prop}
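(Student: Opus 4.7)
The plan is to split the argument according to whether $C_g \cap C_h$ is empty. If it is non-empty, any $x$ in the intersection satisfies $gh(x) = g(h(x)) = g(x) = x$, so $l_T(gh) = 0 = 2d(C_g, C_h)$. For the remainder, suppose $C_g \cap C_h = \emptyset$ and set $d_0 := d(C_g, C_h)$. Since $C_g$ and $C_h$ are disjoint closed subtrees of an $\mathbb{R}$-tree, the distance $d_0 > 0$ is realized by a unique ``bridge'' $[p, q]$ with $p \in C_g$, $q \in C_h$, meeting $C_g$ only at $p$ and $C_h$ only at $q$.

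Let $m$ denote the midpoint of the bridge. The key calculation will be that $d(m, gh(m)) = 2d_0$ with the geodesic $[m, gh(m)]$ leaving $m$ in the direction of $p$. First, since $h$ fixes $q$ but not $p$ (because $p \notin C_h$), the segments $[q, p]$ and $[q, h(p)] = h([q,p])$ meet only at $q$: any common interior point would be a fixed point of $h$ lying in $(q, p]$, contradicting the bridge property. Hence $[m, h(m)] = [m, q] \cup [q, h(m)]$ and $h(m)$ lies on $[q, h(p)]$ at distance $d_0/2$ from $q$. The geodesic from $h(m)$ to any point of $C_g$ must transit $q$ and then follow the bridge to $p$, so the closest point of $C_g$ to $h(m)$ is $p$ and $d(h(m), p) = 3d_0/2$. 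Applying $g$, which fixes $p$ but sends $q$ to $g(q) \neq q$, the same arc-uniqueness argument at $p$ shows that $[p, m]$ and $[p, g(h(m))] = g([p, h(m)])$ meet only at $p$, giving $[m, gh(m)] = [m, p] \cup [p, gh(m)]$ of length $d_0/2 + 3d_0/2 = 2d_0$.

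A symmetric computation, swapping the roles of $g$ and $h$ and taking inverses, will yield $d(m, (gh)^{-1}(m)) = 2d_0$ with the geodesic leaving $m$ in the opposite direction, toward $q$. The two segments then concatenate into a single geodesic of length $4d_0$ through $m$, forcing $gh$ to be hyperbolic with axis through $m$ and $l_T(gh) = 2d_0 = 2d(C_g, C_h)$. The ``in particular'' clause follows at once: if $g$, $h$ and $gh$ are all elliptic then $l_T(gh) = 0$, so $d(C_g, C_h) = 0$, which rules out the second case and forces $C_g \cap C_h \neq \emptyset$.

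The principal obstacle will be verifying rigorously that the relevant pairs of segments leave their common endpoint in genuinely distinct directions so that their lengths add. This rests uniformly on the fact that in an $\mathbb{R}$-tree, two geodesic segments sharing an endpoint coincide on some (possibly trivial) initial arc, and any proper shared arc $[q, r] \subset [q, p] \cap [q, h(p)]$ would make every point of $(q, r]$ a fixed point of $h$ by uniqueness of parametrization and equivariance, contradicting $[p, q] \cap C_h = \{q\}$; the analogous statement at $p$ rests on $g(q) \neq q$.
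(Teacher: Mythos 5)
Your argument is correct. The paper does not prove this proposition itself --- it quotes it from the literature (Culler--Morgan, Proposition 1.2 / Paulin, Proposition 1.8) --- and your proof is the standard ``bridge'' argument found there: reduce to the case $C_g\cap C_h=\emptyset$, take the bridge $[p,q]$ with midpoint $m$, use the fixed-point/arc-uniqueness argument to show $[m,gh(m)]$ and $[m,(gh)^{-1}(m)]$ leave $m$ in opposite directions with length $2d(C_g,C_h)$ each, and invoke the criterion that a point lying in the interior of $[f^{-1}(x),f(x)]$ forces $f$ to be hyperbolic with that translation length. The only steps you use without proof (existence and uniqueness of the bridge between disjoint closed subtrees, and the hyperbolicity criterion just mentioned) are standard facts at the same level as those the paper itself cites, so there is no genuine gap.
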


\subsection{Isometric actions on $\mathbb{R}$-trees}

A \textit{$\Gamma$-space} is a pair $(X, \rho)$ where $X$ is a metric space and $\rho\in Hom(\Gamma,Isom(X))$ is an isometric action of $\Gamma$ on $X$.\\

A \textit{$\Gamma$-tree} is a $\Gamma$-space $(T,\rho)$ where $T$ is an $\mathbb{R}$-tree. If there is no ambiguity, we denote the translation length of $\rho(\gamma)$ by $l_T(\gamma)$.\\

A $\Gamma$-tree $(T,\rho)$ is \textit{trivial} if there is a global fixed point in $T$.\\

Let $\B_S = S \cup \{ss'~|~ s,s'\in S$, $s\ne s' \}$ be the set of generators and product of distinct generators.

\begin{prop}\label{trivial}
Let $\Gamma$ be a finitely generated group and $(T,\rho)$ be a $\Gamma$-tree. The following are equivalent :
\begin{itemize}
\item $(T,\rho)$ is trivial,
\item $l_T(\gamma)=0 ~~ \forall \gamma\in \Gamma$
\item $l_T(\gamma)=0 ~~ \forall \gamma\in \B_S$
\end{itemize}
\end{prop}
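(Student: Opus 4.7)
The plan is to prove the cycle of implications $(1) \Rightarrow (2) \Rightarrow (3) \Rightarrow (1)$, where the first two are essentially immediate and only the last requires work. For $(1) \Rightarrow (2)$, if $x_0 \in T$ is a global fixed point then $d(x_0, \rho(\gamma)(x_0))=0$ for every $\gamma \in \Gamma$, so $l_T(\gamma)=\inf_{x \in T} d(x, \rho(\gamma)(x))=0$. The implication $(2) \Rightarrow (3)$ is tautological since $\B_S \subset \Gamma$.

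For the nontrivial direction $(3) \Rightarrow (1)$, I would first use the dichotomy for isometries of an $\mathbb{R}$-tree (elliptic vs.\ hyperbolic, as recalled in the previous subsection) to conclude that any isometry of translation length $0$ is elliptic, so its characteristic set is exactly its nonempty fixed point set. In particular, for each $s \in S$ the set $F_s := \mathrm{Fix}(\rho(s)) = C_{\rho(s)}$ is a nonempty closed subtree of $T$ by Proposition~\ref{cha set}. Next, for any two distinct $s, s' \in S$, the three elements $s$, $s'$, $ss'$ all lie in $\B_S$ and therefore have translation length $0$; applying Proposition~\ref{elliptique} to the pair $(\rho(s), \rho(s'))$ gives $F_s \cap F_{s'} \neq \emptyset$.

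Now I would invoke the Helly-type property for $\mathbb{R}$-trees stated in the remark after Proposition~\ref{tree}: a finite collection of closed subtrees of an $\mathbb{R}$-tree that is pairwise intersecting has nonempty total intersection. Applied to the finite family $\{F_s\}_{s \in S}$, this produces a point $x_0 \in \bigcap_{s \in S} F_s$. Such an $x_0$ is fixed by every generator and hence, since $S$ generates $\Gamma$, by the whole group, showing that $(T,\rho)$ is trivial.

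The main (minor) obstacle is really just ensuring that the hypothesis about $\B_S$ is exactly strong enough to trigger Proposition~\ref{elliptique}: we need pairwise intersections of the $F_s$, and for that we need each pairwise product $ss'$ (with $s \neq s'$) to be elliptic, which is precisely what membership in $\B_S$ provides. The rest is an application of the two structural propositions about $\mathbb{R}$-trees already quoted in the text, and no further input about $\Gamma$ (such as hyperbolicity or finiteness of $Out(\Gamma)$) is needed for this particular proposition.
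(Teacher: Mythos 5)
Your proposal is correct and follows essentially the same route as the paper: translation length zero forces each generator to be elliptic, Proposition \ref{elliptique} applied to $s$, $s'$, $ss'$ gives pairwise intersection of the fixed-point subtrees, and the Helly property of Proposition \ref{tree} yields a common fixed point. The only difference is that you spell out the trivial implications $(1)\Rightarrow(2)\Rightarrow(3)$ explicitly, which the paper leaves mostly implicit.
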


\begin{proof}
If $(T,\rho)$ is trivial, every element of $\Gamma$ fixes a point, hence is elliptic and $l_T(\gamma)=0$ for all $\gamma\in \Gamma$.\\

Suppose now that $l_T(\gamma)=0 ~~ \forall \gamma\in \B_S$.\\
Let $s,s'\in S$ be such that $s\ne s'$. Since $s$, $s'$ and $ss'$ are elliptic, the set of fixed points $C_{s}$ and $C_{s'}$ have non empty intersection according to Proposition \ref{elliptique}. Proposition \ref{tree} now implies that the intersection $\bigcap_{s\in S} C_{s}$ is non empty : therefore there is a point of $T$ fixed by all generators of $\rho(\Gamma)$, hence by all elements of $\rho(\Gamma)$.
\end{proof}

We say that an arbitrary group is \textit{small} if it does not contain a free subgroup of rank 2.\\
We say that a $\Gamma$-tree $(T,\rho)$ is \textit{small} if the pointwise stabilizer of any non-degenerate segment is small.\\

\subsection{Sequence of automorphisms and convergence of based $\Gamma$-spaces}

We say that a symmetric application $d : \Gamma \times \Gamma \rightarrow [0,\infty[$, vanishing on the diagonal and satisfying the triangular inequality is a \textit{pseudometric} on $\Gamma$.
Let us consider  the space $\mathcal{D}(\Gamma)$ of all pseudometrics on $\Gamma$ endowed with the compact-open topology.\\
$_Gamma$ acts diagonally on $\Gamma \times \Gamma$ by left multiplication. We denote by $\mathcal{ED}(\Gamma) \subset \mathcal{D}(\Gamma)$ the subset of pseudometrics on $\Gamma$ which are invariant under this action.\\
$\mathbb{R}^+$ acts freely on $\mathcal{ED}(\Gamma)$ by scaling, we denote by $\mathcal{PED}(\Gamma)$ the quotient space of projectivized $\Gamma$-equivariant pseudometrics endowed with quotient topology.

A \textit{based $\Gamma$-space} is a triple $(X,x,\rho)$ where $(X,\rho)$ is a $\Gamma$-space, $\rho\in Hom(\Gamma,Isom(X))$ is an isometric action of $\Gamma$ on $X$ and $x\in X$ is a base point which is not fixed by all elements of $\rho(\Gamma)$.\\

A based $\Gamma$-space $(X,x,\rho)$ induces an equivariant pseudo-metric $d_{(X,x,\rho)}$ on $\Gamma$ by setting :
$$ d_{(X,x,\rho)}(\gamma,\gamma') := d_X(\rho(\gamma)(x), \rho(\gamma')(x)) ~~~~ \forall \gamma, \gamma' \in \Gamma$$

Let $((X_n,x_n,\rho_n))_n$ be a sequence of based $\Gamma$-spaces. We say that the sequence $((X_n,x_n,\rho_n))_n$ \textit{converges} to the based $\Gamma$-space $(X,x,\rho)$ if $[d_{(X_n,x_n,\rho_n)}]$ converges to $ [d_{(X,x,\rho)}]$ in $\mathcal{PED}(\Gamma)$.\\

From now on, $(\varphi_n)$ will denote a sequence of elements of $Aut(\Gamma)$ that projects onto a sequence of distinct elements of $Out(\Gamma)$.\\

For all $n\in \mathbb{N}$, we introduce the application $f_n: C_S(\Gamma) \rightarrow \mathbb{N}$ such that $f_n(x)= \max_{s\in S} d_S(x, \varphi_n(s) x)$ for all $x\in C_S(\Gamma)$. \\
Let $\lambda_n= \min \{f_n(x) ~|~ x\in C_S(\Gamma)\}$ and choose arbitrarily $x_n\in C_S(\Gamma)$ that realize this minimum.
Moreover, we will denote by $\rho_0\in Hom(\Gamma,Isom(C_S(\Gamma)))$ the natural action of $\Gamma$ on $C_S(\Gamma)$, and we write  $\rho_n:=\rho_0\circ \varphi_n \in Hom(\Gamma,Isom(C_S(\Gamma)))$.\\

The sequences $(x_n)\in C_S(\Gamma)^{\mathbb{N}}$, $(\lambda_n)\in \mathbb{N}^{\mathbb{N}}$ and $(\rho_n)\in Hom(\Gamma,Isom(C_S(\Gamma)))^{\mathbb{N}}$ will be called \textit{sequences induced by $(\varphi_n)$}.

\begin{prop}\label{infini}
The sequence of integers $(\lambda_n)$ induced by $(\varphi_n)$ is not bounded.
\end{prop}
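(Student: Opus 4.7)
The plan is to argue by contradiction: suppose $(\lambda_n)$ is bounded, say $\lambda_n \leqslant M$ for all $n$, and derive that $(\varphi_n)$ can only represent finitely many outer classes, contradicting the hypothesis. For each $n$, choose $y_n \in C_S(\Gamma)$ with $f_n(y_n) \leqslant M$. Since the minimum of $f_n$ over the Cayley graph and over the vertex set $\Gamma \subset C_S(\Gamma)$ differ by at most an additive constant (moving from a point on an edge to a nearest vertex $v$ changes each quantity $d_S(y_n,\varphi_n(s)\cdot y_n)$ by at most $2 \cdot d_S(y_n,v) \leqslant 1$ by the triangle inequality and the fact that $\varphi_n(s)$ acts isometrically), we may assume $y_n \in \Gamma$ at the cost of replacing $M$ by $M' := M+1$.

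Next I would renormalize by an inner automorphism to move the base point to the identity. Define $\psi_n := \iota_{y_n^{-1}} \circ \varphi_n$, where $\iota_g(\gamma) := g\gamma g^{-1}$ denotes conjugation by $g$. Then for every $s \in S$,
$$|\psi_n(s)|_S = |y_n^{-1}\varphi_n(s) y_n|_S = d_S(y_n, \varphi_n(s)\cdot y_n) \leqslant M'.$$
Since $\psi_n$ and $\varphi_n$ differ by an element of $Inn(\Gamma)$, they represent the same class in $Out(\Gamma)$; by the hypothesis that the $\varphi_n$ project to pairwise distinct outer automorphisms, the $\psi_n$'s are themselves pairwise distinct in $Aut(\Gamma)$.

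The contradiction is then a pigeonhole argument: an automorphism of $\Gamma$ is entirely determined by its values on the finite generating set $S$, and each $\psi_n(s)$ lies in the finite ball $\{\gamma \in \Gamma \mid |\gamma|_S \leqslant M'\}$. Hence there are only finitely many possible tuples $(\psi_n(s))_{s \in S}$, contradicting the distinctness of the $\psi_n$. The only step requiring a bit of care is the passage from a minimizer on $C_S(\Gamma)$ to a vertex of $\Gamma$; the rest is bookkeeping about inner automorphisms and finiteness of balls in the Cayley graph.
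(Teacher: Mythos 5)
Your argument is correct and is essentially the paper's own proof: both pass from the minimizer $x_n$ to a nearby vertex $y_n$, conjugate by $y_n$ to bound $|y_n^{-1}\varphi_n(s)y_n|_S$ uniformly over $s\in S$, and conclude by pigeonhole on the finite ball that two of the $\varphi_n$ would coincide in $Out(\Gamma)$. The only differences are cosmetic (your additive constant $M+1$ versus the paper's $M+2$, and your explicit naming of the inner automorphism $\iota_{y_n^{-1}}$).
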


\begin{proof}
Ad absurdum, suppose that $(\lambda_n)$ is bounded by $M$.
Then for any vertex $y_n$ closest to $x_n$, we would have $d_S(e,y_n^{-1}\varphi_n(s)y_n)=d_S(y_n,\varphi_n(s)y_n)\leqslant M+2$ for all $s\in S$ and $n\in \mathbb{N}$.\\
Since there are only finitely many vertices in the ball of radius $M+2$ about $e$, this bound would imply the existence of integers $n,m\in \mathbb{N}$, $n\ne m$ such that $y_n^{-1}\varphi_n(s)y_n =y_m^{-1}\varphi_m(s)y_m$ for all $s\in S$. Thereby, $\varphi_n$ and $\varphi_m$ would be equal in $Out(\Gamma)$, this is a contradiction. 
\end{proof}

Next theorem can be found in \cite[Theorem 3.9]{Best} :

\begin{thm}\label{Best}
Let $(x_n)$, $(\rho_n)$ and $(\lambda_n)$ be the sequences induced by $(\varphi_n)$.\\
The sequence $((C_S(\Gamma),x_n,\rho_n))$ of based $\Gamma$-spaces converges to $(T,x,\rho)$, where $(T,\rho)$ is a small and non-trivial $\Gamma$-tree and $x\in T$.\\

More precisely, $$d_{(T,x,\rho)}(.,.)= \lim_{n \rightarrow \infty} \frac{d_{(C_S(\Gamma),x_n,\rho_n)}}{\lambda_n}(.,.) $$
Moreover, $$l_T(\rho(\gamma))=\lim_{n \rightarrow \infty} \frac{l_S(\varphi_n(\gamma))}{\lambda_n} \qquad \forall \gamma\in \Gamma$$
\end{thm}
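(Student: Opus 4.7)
My plan is to follow the Bestvina-Paulin construction, realizing the $\mathbb{R}$-tree as a rescaled limit of the Cayley graph acted upon through the automorphisms $(\varphi_n)$. Consider the rescaled based $\Gamma$-spaces $Y_n := (C_S(\Gamma), \tfrac{1}{\lambda_n} d_S)$ with basepoint $x_n$ and action $\rho_n$. By the definition of $\lambda_n = f_n(x_n)$ one has $d_{Y_n}(x_n, \rho_n(s) x_n) \leqslant 1$ for every $s \in S$, with equality for at least one generator, so the rescaled $\rho_n$-orbit of any finite subset of $\Gamma$ has uniformly bounded diameter around $x_n$. Since $\lambda_n \to \infty$ by Proposition~\ref{infini}, the hyperbolicity constant $\delta_S/\lambda_n$ of $Y_n$ tends to zero.

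Next, I would extract a pointed limit. The pseudometrics $d_{(Y_n, x_n, \rho_n)}$ on $\Gamma$ are uniformly bounded on every finite subset (the orbit of any $\gamma$ moves $x_n$ by at most $|\gamma|_S$ in $Y_n$), so a diagonal extraction yields a subsequence along which they converge pointwise to a $\Gamma$-equivariant pseudometric $d_\infty$, and the corresponding classes converge in $\mathcal{PED}(\Gamma)$. From $d_\infty$ I build a pointed geodesic metric space $(T, x)$ carrying an isometric action $\rho \in Hom(\Gamma, Isom(T))$ (equivalently, take an ultralimit of the $Y_n$). Since Gromov-hyperbolicity passes to such limits and the constants vanish, $T$ is a complete geodesic $0$-hyperbolic space, hence an $\mathbb{R}$-tree. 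The formula $l_T(\rho(\gamma)) = \lim_n l_S(\varphi_n(\gamma))/\lambda_n$ then follows because $l_S(\varphi_n(\gamma))/\lambda_n$ is exactly the translation length of $\rho_n(\gamma)$ on $Y_n$ and translation lengths are continuous under this pointed convergence (the infimum in $l$ can be tracked within bounded $Y_n$-distance from $x_n$ after rescaling, as the minimal-length representative of a conjugacy class in $\varphi_n(\gamma)$ stays controlled). Non-triviality of $(T, \rho)$ is immediate: at least one $s \in S$ satisfies $d_T(x, \rho(s) x) = 1 > 0$, so $x$ is moved, and Proposition~\ref{trivial} forbids a global fixed point.

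The main obstacle is establishing smallness of the $\Gamma$-tree, i.e.\ showing that the pointwise stabilizer in $\Gamma$ of any non-degenerate arc $[p, q] \subset T$ contains no free subgroup of rank two. Suppose to the contrary that $g, h \in \Gamma$ generate such a subgroup and both fix $[p, q]$ pointwise. Lift $p, q$ to sequences $p_n, q_n \in Y_n$ with $d_{Y_n}(p_n, q_n)$ bounded below and both $d_{Y_n}(p_n, \rho_n(g) p_n), d_{Y_n}(q_n, \rho_n(g) q_n) \to 0$ (and similarly for $h$). Unscaling, in the $\delta_S$-hyperbolic Cayley graph the elements $\varphi_n(g)$ and $\varphi_n(h)$ each displace the two endpoints of a geodesic segment of length $\Theta(\lambda_n)$ by $o(\lambda_n)$; a standard thin-quadrilateral argument then forces them to act as $o(\lambda_n)$-near-translations along the entire segment. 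The acylindricity of the natural action of a hyperbolic group on its Cayley graph (cf.\ Delzant) bounds the pointwise near-stabilizer of a sufficiently long geodesic by a constant depending only on $\delta_S$ and $|S|$, forcing $\langle \varphi_n(g), \varphi_n(h) \rangle$ to be finite for large $n$. Since $\varphi_n \in Aut(\Gamma)$, the subgroup $\langle g, h \rangle$ would then be finite, contradicting the assumption that it is free of rank two. This contradiction yields smallness.
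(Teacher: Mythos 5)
The paper offers no proof of this statement: it is quoted directly from Bestvina \cite[Theorem 3.9]{Best}, and your sketch reconstructs the Bestvina--Paulin rescaling argument that the cited reference uses, so the overall strategy is the right one. There is, however, one genuine logical gap, in the non-triviality step. From the fact that some $s\in S$ satisfies $d_T(x,\rho(s)x)=1$ you conclude that ``Proposition \ref{trivial} forbids a global fixed point''; this is a non sequitur. Knowing that the basepoint $x$ is moved by $\rho(s)$ only shows that $x$ itself is not a global fixed point, and it does not show $l_T(\rho(s))>0$: the isometry $\rho(s)$ could perfectly well be elliptic with fixed-point set located elsewhere in $T$, in which case Proposition \ref{trivial} gives you nothing. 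The correct argument is precisely what the choice of $x_n$ was designed for and which your sketch never uses: $x_n$ minimizes $f_n(y)=\max_{s\in S} d_S(y,\varphi_n(s)y)$, so for every $p\in T$, realized as a limit of points $p_n\in C_S(\Gamma)$, one has $\max_{s\in S} d_T(p,\rho(s)p)=\lim_n \max_{s\in S} d_S(p_n,\varphi_n(s)p_n)/\lambda_n \geqslant \lim_n \lambda_n/\lambda_n=1$, since $f_n(p_n)\geqslant \lambda_n$ by minimality. Hence no point of $T$ is fixed by all generators, and the action is non-trivial.

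A smaller soft spot is the continuity of translation lengths. The point of $C_S(\Gamma)$ realizing $l_S(\varphi_n(\gamma))$ (equivalently, a minimal-length conjugate of $\varphi_n(\gamma)$) has no reason to remain within bounded rescaled distance of $x_n$, so ``tracking the infimum near the basepoint'' is not justified as stated. The standard fix is to express $l(g)$, up to an error $O(\delta)$, in terms of the displacements $d(y,gy)$ and $d(y,g^2y)$ of an \emph{arbitrary} point $y$ in a $\delta$-hyperbolic space (or to pass through the stable norm and Proposition \ref{translation}); since the hyperbolicity constant of the rescaled space is $\delta_S/\lambda_n\to 0$, the error vanishes in the limit and the displacement formula for the limit pseudometric yields the claimed equality. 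The smallness argument via uniform bounds on near-stabilizers of long geodesics in the Cayley graph is the standard one and is acceptable at the level of a sketch.
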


The based $\Gamma$-tree $(T,x,\rho)$ given by this theorem will be called \textit{based $\Gamma$-tree induced by $(\varphi_n)$}.

\begin{cor}\label{bornée}
Let $(T,x,\rho)$ be the based $\Gamma$-tree induced by $(\varphi_n)$.
If $\gamma\in \Gamma$ is such that $(l_S(\varphi_n(\gamma)))$ is bounded, then $\rho(\gamma)$ is elliptic.
\end{cor}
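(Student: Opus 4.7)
The plan is to read this off directly from the translation-length formula furnished by Theorem \ref{Best}, combined with the unboundedness of the scaling factors $(\lambda_n)$ coming from Proposition \ref{infini}.

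First I would recall the explicit formula
\[
l_T(\rho(\gamma)) \;=\; \lim_{n\to\infty} \frac{l_S(\varphi_n(\gamma))}{\lambda_n}
\]
provided by Theorem \ref{Best}. By hypothesis the numerators $l_S(\varphi_n(\gamma))$ form a bounded sequence, say bounded by some constant $M \geqslant 0$. On the other hand, Proposition \ref{infini} tells us that $(\lambda_n)$ is unbounded, so I can extract a subsequence along which $\lambda_n \to \infty$. Along this subsequence, $l_S(\varphi_n(\gamma))/\lambda_n \leqslant M/\lambda_n \to 0$. Since the full sequence of ratios converges by Theorem \ref{Best}, the limit is forced to equal $0$, i.e., $l_T(\rho(\gamma))=0$.

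It then remains to conclude that an isometry of an $\mathbb{R}$-tree with zero translation length must be elliptic. This was recorded in Section 5.1: in an $\mathbb{R}$-tree there is no parabolic behaviour; the characteristic set $C_g$ of an isometry $g$ with $l_T(g)=0$ is precisely its (non-empty) fixed point set. Applying this to $g=\rho(\gamma)$ yields that $\rho(\gamma)$ fixes a point of $T$, hence is elliptic.

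There is essentially no obstacle here; the only minor subtlety is that one should argue that the limit is $0$, not merely that a subsequence tends to $0$. This is automatic because Theorem \ref{Best} already asserts existence of the limit, so any subsequential limit, in particular the one we computed along the subsequence where $\lambda_n\to\infty$, agrees with the value $l_T(\rho(\gamma))$.
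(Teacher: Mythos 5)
Your proof is correct and follows essentially the same route as the paper: apply the translation-length formula from Theorem \ref{Best}, use Proposition \ref{infini} to force the limit to be $0$, and invoke the dichotomy for isometries of $\mathbb{R}$-trees recorded in Section 5.1 to conclude ellipticity. The only difference is that you spell out the subsequence argument explicitly, which the paper leaves implicit.
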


\begin{proof}
Recall that the sequence $(\lambda_n)$ of integers induced by $(\varphi_n)$ is not bounded according to Proposition \ref{infini}. The last equality of Theorem \ref{Best} yields that $l_T(\rho(\gamma))=0$, hence $\rho(\gamma)$ is an elliptic isometry of $T$.
\end{proof}

\subsection{Testable sets and test subsets}

\begin{defi}
We say that $B\subset \Gamma$ is a \textit{test subset} if, for all sequences $(\phi_n)_n$ of distinct elements of $Out(\Gamma)$,  there is $b\in B$ such that $\limsup_n~ l_S(\phi_n(b)) = \infty$. 
\end{defi}

This property does not depend on the choice of the finite set of generators $S$. Note that if $A\subset \Gamma$ is invariant by automorphisms and $A$ contains a test subset, then $A$ is testable.

\subsubsection{$\B_S$ is a test subset}

Recall that $\B_S= S \cup \{ss'~|~ s,s'\in S$, $s\ne s' \}$ denotes the set of generators and product of distinct generators. Richard Canary asked the following question in the appendix of \cite{Canary} : \\

Is it true that for all hyperbolic group $\Gamma$ and all $M\geqslant 0$, the set 
$$\{\phi\in Out(\Gamma)  ~|~ l_S(\phi(b))\leqslant M ~~ \forall b\in \B_S\}$$
is finite ?\\

The next proposition gives an affirmative answer to this question, and can be used to prove the proper discontinuity of the action of $Out(\Gamma)$ on the set of some $A$-stable representations (see next section).

\begin{thm}\label{test}
For all hyperbolic groups $\Gamma$ such that $Out(\Gamma)$ is infinite and all finite generating sets $S$, $\B_S= S \cup \{ss'~|~ s,s'\in S$, $s\ne s' \}$ is a test subset.
\end{thm}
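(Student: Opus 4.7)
The plan is to argue by contradiction, leveraging the machinery set up in the preceding subsections, namely the convergence theorem of Paulin (Theorem \ref{Best}) and the triviality criterion from Proposition \ref{trivial}. Suppose $\B_S$ is not a test subset; then there exist $M\geqslant 0$ and a sequence $(\phi_n)_n$ of pairwise distinct elements of $Out(\Gamma)$ such that $l_S(\phi_n(b))\leqslant M$ for every $b\in \B_S$ and every $n$. I would first lift each $\phi_n$ to $\varphi_n\in Aut(\Gamma)$; since $l_S(\cdot)=\min_{g\in\Gamma}|g\cdot g^{-1}|_S$ is a conjugacy-class invariant, the value $l_S(\varphi_n(b))$ does not depend on the choice of lift, so we still have $l_S(\varphi_n(b))\leqslant M$ for all $b\in\B_S$, and the classes $(\varphi_n)$ remain distinct in $Out(\Gamma)$.

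Next I would apply Theorem \ref{Best} to the sequence $(\varphi_n)$: it produces induced sequences $(x_n)$, $(\rho_n)$, $(\lambda_n)$ and a based $\Gamma$-tree $(T,x,\rho)$ obtained as the limit of $((C_S(\Gamma),x_n,\rho_n))_n$, with the key property that $(T,\rho)$ is \emph{non-trivial}. For each $b\in \B_S$, the sequence $\bigl(l_S(\varphi_n(b))\bigr)_n$ is bounded by $M$, so Corollary \ref{bornée} gives that $\rho(b)$ is elliptic, i.e.\ $l_T(\rho(b))=0$, for every $b\in \B_S$. But Proposition \ref{trivial} states precisely that vanishing of the translation length on $\B_S$ is equivalent to the triviality of the $\Gamma$-tree. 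This contradicts the non-triviality assertion in Theorem \ref{Best}, and the proof is complete.

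The bulk of the work has been front-loaded into the cited results, so there is no serious obstacle in the argument itself. The only point requiring mild care is the passage from $Out(\Gamma)$ to $Aut(\Gamma)$: one needs that $l_S\circ\phi$ is well-defined on outer automorphisms, which follows because translation length is conjugation-invariant, and that distinct classes in $Out(\Gamma)$ lift to representatives satisfying the hypothesis of Theorem \ref{Best} (which only requires distinctness modulo $Inn(\Gamma)$). Both are immediate, so the heart of the proof is really to recognize that Proposition \ref{trivial} and Corollary \ref{bornée} slot together with the non-triviality in Theorem \ref{Best} to close the argument.
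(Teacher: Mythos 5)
Your proof is correct and follows essentially the same route as the paper: negate the test-subset property, lift to $Aut(\Gamma)$, apply Theorem \ref{Best} to get a non-trivial limiting $\Gamma$-tree, use Corollary \ref{bornée} to see that every $b\in\B_S$ acts elliptically, and contradict non-triviality via Proposition \ref{trivial}. The extra remarks on conjugacy-invariance of $l_S$ and the finiteness of $\B_S$ (giving a uniform bound $M$) are harmless elaborations of the same argument.
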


\begin{proof}
Ad absurdum, suppose that there is a sequence $(\phi_n)$ of distinct elements of $Out(\Gamma)$ such that for all $b\in \B_S$, $(l_S(\phi_n(b)))$ is bounded. Denote by $(\varphi_n)$ any sequence of elements of $Aut(\Gamma)$ which projects onto $(\phi_n)$ and let $(T,x,\rho)$ be the based $\Gamma$-tree induced by $(\varphi_n)$.\\
Hence $\rho(b)$ would be elliptic for all $b\in \B_S$ according to Corollary \ref{bornée}. Proposition \ref{trivial} now implies that $(T,\rho)$ is trivial, contradicting Theorem \ref{Best}.
\end{proof}

\subsubsection{The commutator set is testable}

The subset $\mathcal{C}=\{[g,h]~|~ g,h\in \Gamma \}$ of commutators of $\Gamma$ is invariant by automorphisms. In this section, we show that it is testable.\\

A $\Gamma$-tree $(T,\rho)$ is \textit{minimal} if there is no $\rho(\Gamma)$-invariant subtree $T'$ properly contained in $T$. If $(T,\rho)$ is non-trivial, there is a unique minimal invariant subtree $T'$ : the union of all axes of hyperbolic elements of $\rho(\Gamma)$ (see \cite[Proposition 2.4]{Paulin} for a proof).\\

We say that a $\Gamma$-tree $(T,\rho)$ is \textit{irreducible} if $(T,\rho)$ is non trivial and $\rho(\Gamma)$ does not fix any point of $\partial T$.\\

Culler and Morgan proved the following (\cite[Lemma 5.1]{CullerMorgan} and \cite[Corollary 2.3]{CullerMorgan}) :

\begin{lem}\label{p1}
If $\Gamma$ is a non-elementary hyperbolic group and $(T,\rho)$ is a non-trivial, minimal and small $\Gamma$-tree, then $(T,\rho)$ is irreducible.
\end{lem}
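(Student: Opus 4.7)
The plan is a proof by contradiction. Suppose $(T,\rho)$ is non-trivial, minimal and small but reducible, so $\rho(\Gamma)$ fixes some point of $\partial T$. If $\rho(\Gamma)$ fixes two ends, the bi-infinite geodesic between them is a non-empty $\rho(\Gamma)$-invariant subtree, hence equals $T$ by minimality, so $T$ is a line and $\Gamma$ acts on $\R$; otherwise $\rho(\Gamma)$ fixes a unique end $\xi\in\partial T$, and the Busemann function at $\xi$ produces a homomorphism $\beta:\Gamma\to\R$ giving the signed translation along any ray to $\xi$. In either case, after possibly passing to the orientation-preserving subgroup of index $\leq 2$ (which remains non-elementary hyperbolic and small), I obtain a homomorphism $\beta:\Gamma\to\R$, and it suffices to reach a contradiction in this setting.

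I first argue that $\beta\ne 0$. If $\beta=0$, every $\rho(\gamma)$ fixes $\xi$ with zero Busemann shift; but a hyperbolic isometry fixing an end has that end as an endpoint of its axis, hence Busemann shift $\pm l_T(\gamma)\ne 0$, so every $\rho(\gamma)$ is elliptic, and Proposition \ref{trivial} forces $(T,\rho)$ to be trivial, a contradiction. Hence $\beta\ne 0$, and $K:=\ker\beta$ has infinite index in $\Gamma$; moreover $K$ is infinite, because $\Gamma$ non-elementary hyperbolic is not virtually abelian.

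The heart of the argument is to show that $K$ contains no free subgroup of rank $2$, and then to contradict this. Every $k\in K$ is elliptic and fixes $\xi$; choosing any fixed point $p$ of $k$, the ray from $p$ to $\xi$ and its $k$-image are two rays from $p$ to the same end, so by uniqueness of arcs in $\R$-trees they coincide and $k$ pointwise fixes this whole ray. For any $k_1,k_2\in K$, the rays they respectively fix share a common sub-ray (two rays toward the same end eventually coincide), which is pointwise fixed by $\langle k_1,k_2\rangle$; smallness of $(T,\rho)$ then forbids $\langle k_1,k_2\rangle$ from containing a free group of rank $2$, so $K$ itself contains no such subgroup. On the other hand, pick an infinite order $k_0\in K$ (via Proposition \ref{inf}) and $\gamma\in\Gamma$ with $\gamma\{k_0^-,k_0^+\}\cap\{k_0^-,k_0^+\}=\emptyset$ (possible by non-elementarity of $\Gamma$); then $k_0$ and $k_0':=\gamma k_0\gamma^{-1}$ are infinite order elements of $K$ (by normality) with disjoint fixed point sets, so Lemma \ref{L1} applied to $(k_0,k_0')$ produces a ping-pong pair inside $K$, which by the remark preceding Lemma \ref{L1} generates a copy of $F_2$ inside $K$, the desired contradiction.

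The main obstacle is the ray-fixing step, which uses the rigidity of $\R$-trees (uniqueness of arcs between two points) to upgrade the merely topological statement that $k$ fixes the end $\xi$ to the metric statement that $k$ pointwise fixes an entire ray toward $\xi$. Everything else reduces to the $\R$-tree combinatorics of Propositions \ref{tree} and \ref{elliptique} together with the standard boundary ping-pong producing $F_2$ inside any infinite normal subgroup of a non-elementary hyperbolic group.
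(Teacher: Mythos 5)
Your argument is correct, but note that the paper does not prove this lemma at all: it is quoted directly from Culler--Morgan (\cite[Lemma 5.1, Corollary 2.3]{CullerMorgan}), so there is no in-paper proof to compare against. What you have written is essentially a self-contained reconstruction of the Culler--Morgan argument: a non-trivial reducible action fixes an end $\xi$, the Busemann cocycle at $\xi$ gives a homomorphism $\beta:\Gamma\to\R$ whose kernel consists of elliptic elements each pointwise fixing a ray toward $\xi$, so by smallness $\ker\beta$ contains no $F_2$, while normality plus Proposition \ref{inf} and the ping-pong Lemma \ref{L1} force $F_2\leqslant\ker\beta$. All the key steps check out: a hyperbolic isometry of an $\R$-tree fixing an end has that end on its axis (so $\beta\equiv 0$ would make every element elliptic, contradicting non-triviality via Proposition \ref{trivial}); an elliptic element fixing $\xi$ pointwise fixes the ray from any of its fixed points to $\xi$ by uniqueness of arcs; and two rays to a common end share a sub-ray. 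Two small remarks: your initial case split and the passage to an index-two subgroup are unnecessary, since ``reducible and non-trivial'' already means $\rho(\Gamma)$ fixes some end $\xi$, and the Busemann homomorphism at that $\xi$ is then defined on all of $\Gamma$ whether or not a second end is fixed; and the infinitude of $K=\ker\beta$ is most directly seen from $[\Gamma,\Gamma]\subset K$ together with the fact that a non-elementary hyperbolic group has infinite derived subgroup. Neither affects correctness.
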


\begin{lem}\label{p2}
If $(T,\rho)$ is an irreducible $\Gamma$-tree, then there are $g,h\in \Gamma$ such that $l_T([g,h]) > 0$.  
\end{lem}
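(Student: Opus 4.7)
The plan is to first produce two elements of $\Gamma$ acting hyperbolically on $T$ with no common fixed point in $\partial T$, and then extract a hyperbolic commutator via a ping-pong argument in the tree.

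Since $(T,\rho)$ is, in particular, non-trivial, Proposition \ref{trivial} yields some $g_0\in\Gamma$ with $l_T(g_0)>0$; denote its axis by $A_{g_0}\subset T$ and its endpoints by $g_0^\pm\in\partial T$. The irreducibility hypothesis says that no point of $\partial T$ is fixed by all of $\rho(\Gamma)$, so in particular $g_0^+$ is moved by some element, i.e.\ there exists $\alpha\in\Gamma$ with $\rho(\alpha)(g_0^+)\ne g_0^+$. The conjugate $h_0:=\alpha g_0\alpha^{-1}$ is hyperbolic with axis $\rho(\alpha)(A_{g_0})$ and endpoints $\rho(\alpha)(g_0^\pm)$, so the unordered pairs $\{g_0^+,g_0^-\}$ and $\{h_0^+,h_0^-\}$ are distinct. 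If they happen to be disjoint, I set $(g,h):=(g_0,h_0)$; otherwise they share a single endpoint at infinity, and a further manipulation (a second conjugation by an element moving the shared endpoint, or replacing $h_0$ by a product such as $g_0 h_0 g_0^{-1}$) produces hyperbolic $g,h\in\Gamma$ with $\{g^+,g^-\}\cap\{h^+,h^-\}=\emptyset$.

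Once such a pair is in hand, the axes $A_g$ and $A_h$ share no endpoint at infinity, so by standard tree geometry their intersection $A_g\cap A_h$ is a (possibly empty) bounded segment. I would then invoke the standard $\R$-tree ping-pong lemma: for $N$ large enough, the pair $(\rho(g^N),\rho(h^N))$ plays ping-pong on small neighborhoods of $\{g^\pm\}$ and $\{h^\pm\}$ inside $T\cup\partial T$, so $\langle\rho(g^N),\rho(h^N)\rangle$ is a free group of rank two acting freely on the subtree of $T$ generated from $A_g\cup A_h$. Every non-trivial element of a free action on a tree is hyperbolic; in particular $\rho([g^N,h^N])$ is hyperbolic and $l_T([g^N,h^N])>0$, as required.

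The main obstacle is the first step: a single conjugation may move only one endpoint of $A_{g_0}$, so $A_{g_0}$ and $\rho(\alpha)(A_{g_0})$ may share one endpoint at infinity, requiring a second manipulation to reach fully disjoint endpoint pairs. The degenerate configuration in which all hyperbolic elements share the same axis (so the minimal invariant subtree is a single line on which $\rho(\Gamma)$ acts through $\mathrm{Isom}(\R)$) also needs attention: irreducibility forces at least one element to swap the two endpoints of this line, and a direct computation in $\mathrm{Isom}(\R)$ then shows that the commutator of such a reflection with a non-trivial translation is itself a non-trivial translation, hence hyperbolic on $T$.
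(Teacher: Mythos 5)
The paper does not actually prove this lemma --- it is quoted from Culler--Morgan \cite[Corollary 2.3]{CullerMorgan} --- so your direct argument is a genuinely self-contained alternative, and its two endgames are sound. Once you have hyperbolic $g,h$ with $\{g^\pm\}\cap\{h^\pm\}=\emptyset$, the axes meet in a bounded (possibly empty) segment and the ping-pong argument makes $[g^N,h^N]$ hyperbolic for $N$ large. In the degenerate case where all hyperbolic elements share one axis $L$, that line is invariant, irreducibility forces some $\gamma$ to exchange its two ends, $\rho(\gamma)$ restricted to $L$ is then a reflection, and $[g,\gamma]$ acts on $L$ as a translation by $2l_T(g)>0$, hence is hyperbolic on $T$.

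The gap sits exactly where you flag it: passing from ``the endpoint pairs are distinct'' to ``the endpoint pairs are disjoint''. First, the pairs $\{g_0^\pm\}$ and $\{\rho(\alpha)(g_0^\pm)\}$ need not even be distinct, since $\alpha$ may swap the two ends of $A_{g_0}$. More seriously, the proposed repair ``replace $h_0$ by $g_0h_0g_0^{-1}$'' cannot work: if $g_0$ (ends $\{p,q\}$) and $h_0$ (ends $\{q,r\}$) share the end $q$, then $q$ is fixed by $g_0$, so $g_0h_0g_0^{-1}$ has endpoint pair $\{q,\rho(g_0)(r)\}$ and still shares $q$ with both; conjugating by anything that fixes $q$ never destroys the common end. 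The correct way to close this case is global rather than by ad hoc conjugation: suppose no two hyperbolic elements have disjoint endpoint pairs and not all axes coincide, and pick hyperbolic $g,h$ sharing exactly one end $q$, with remaining ends $p\ne r$. Then every hyperbolic $k$ has $q$ as an end: otherwise its pair would have to be $\{p,r\}$, and comparing $k$ with $gkg^{-1}$ forces $\rho(g)$ to fix the three ends $p,q,r$, which is impossible since a hyperbolic isometry of an $\R$-tree fixes exactly the two ends of its axis. Applying this to $\gamma g\gamma^{-1}$ and $\gamma h\gamma^{-1}$ for arbitrary $\gamma\in\Gamma$ then yields $\rho(\gamma)(q)=q$, so $q$ is a globally fixed end, contradicting irreducibility. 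With that argument inserted, your trichotomy (disjoint pairs / common axis / contradiction with irreducibility) is exhaustive and the proof goes through.
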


We can deduce

\begin{prop}\label{well-test}
If $\Gamma$ is a non-elementary hyperbolic group, then the set $\mathcal{C}$ of commutators of $\Gamma$ is testable.
\end{prop}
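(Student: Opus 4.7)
The plan is to argue by contradiction: assume there exists a sequence $(\phi_n)$ of distinct elements of $Out(\Gamma)$ such that $l_S(\phi_n([g,h]))$ is bounded for every commutator $[g,h] \in \mathcal{C}$, and derive a contradiction from Lemmas \ref{p1} and \ref{p2}. (The $Aut(\Gamma)$-invariance of $\mathcal{C}$ is immediate, since automorphisms send commutators to commutators.)

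First, I would lift $(\phi_n)$ to a sequence $(\varphi_n)$ in $Aut(\Gamma)$ and apply Theorem \ref{Best} to produce the based $\Gamma$-tree $(T,x,\rho)$ induced by $(\varphi_n)$; by that theorem, $(T,\rho)$ is a small, non-trivial $\Gamma$-tree. Since translation length on $\Gamma$ is a conjugacy invariant, the bound on $l_S(\phi_n([g,h]))$ descends to a bound on $l_S(\varphi_n([g,h]))$, so Corollary \ref{bornée} yields that $\rho([g,h])$ is elliptic for every $[g,h]\in \mathcal{C}$.

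Next I would pass to the unique minimal $\rho(\Gamma)$-invariant subtree $T' \subset T$, namely the union of axes of hyperbolic elements of $\rho(\Gamma)$ (this exists because $(T,\rho)$ is non-trivial). The restricted action $(T',\rho)$ remains non-trivial, small (smallness is inherited by subtrees), and minimal. Because $\Gamma$ is non-elementary, Lemma \ref{p1} implies $(T',\rho)$ is irreducible, and Lemma \ref{p2} then furnishes elements $g,h \in \Gamma$ with $l_{T'}([g,h])>0$. Since $T'$ is an isometrically embedded subtree of $T$, the axis of $\rho([g,h])$ in $T'$ is also its axis in $T$, hence $l_T([g,h]) = l_{T'}([g,h])>0$, so $\rho([g,h])$ is hyperbolic on $T$. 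This contradicts the fact established in the previous step that $\rho([g,h])$ is elliptic, completing the proof.

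The only real subtlety is the last step, where one must justify that the minimal invariant subtree $T'$ inherits smallness and non-triviality, and that translation length on $T'$ agrees with translation length on $T$; once these routine facts are in hand, the argument is essentially a direct combination of Theorem \ref{Best}, Corollary \ref{bornée}, and Lemmas \ref{p1}--\ref{p2}.
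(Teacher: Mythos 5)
Your proof is correct and follows essentially the same route as the paper's: lift $(\phi_n)$ to $Aut(\Gamma)$, invoke Theorem \ref{Best} and Corollary \ref{bornée} to make all commutators elliptic, pass to the minimal invariant subtree, and contradict Lemmas \ref{p1} and \ref{p2}. Your extra care about the minimal subtree inheriting smallness and translation lengths is a welcome (and correct) elaboration of a step the paper passes over quickly.
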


\begin{proof}
Ad absurdum, suppose there is a sequence $(\phi_n)$ of distinct elements of $Out(\Gamma)$ and such that $(l_S(\phi_n(w)))$ is bounded for all $w\in \mathcal{C}$. Denote by $(\varphi_n)$ any sequence of elements of $Aut(\Gamma)$ which projects onto $(\phi_n)$ and let $(T,x,\rho)$ be the based $\Gamma$-tree induced by $(\varphi_n)$.\\
$(T,\rho)$ is a non-trivial and small $\Gamma$-tree according to Theorem \ref{Best}. Up to restricting to $T'$, where $T'$ denotes the union of all axes of hyperbolic elements of $\rho(\Gamma)$, we can suppose that the action is minimal. Hence $(T',\rho)$ is irreducible according to Lemma \ref{p1} and Lemma \ref{p2} now states that there is $w\in \mathcal{C}$ such that $\rho(w)$ is hyperbolic. Corollary \ref{bornée} leads us to a contradiction.
\end{proof}

\section{Examples of $A$-stable representations}

Let $\Gamma$ be a hyperbolic group, let $S$ be a finite set of generators of $\Gamma$ and let $G=Isom(X)$ be the isometry group of a Gromov-hyperbolic, proper and geodesic metric space $X$.\\

In this section, we show that quasi-convex representations and primitive stable representations are special cases of $A$-stable representations and finally give a proof of Proposition \ref{QC}.
We also introduce a new invariant subset of representations, namely the set of commutator-stable representations. Specializing to $G=PSL_2(\mathbb{C})\cong Isom^+(\mathbb{H}^3)$, we prove Theorem \ref{df2}.

If $A\subset \Gamma$ is an $Aut(\Gamma)$-invariant subset, we will denote by $D_A(\Gamma,G)$ the set of conjugacy classes of $A$-well-displacing representations.

\subsection{Quasi-convex representations}

We start by a trivial case, when $A=\Gamma$ :
$$QC(\Gamma,G)=S_{\Gamma}(\Gamma,G)=D_{\Gamma}(\Gamma,G) \subset AH(\Gamma,G)$$

where $QC(\Gamma,G)$ is the set of conjugacy classes of quasi-convex representations and the last equality holds according to Proposition \ref{wd}.\\

The group $\Gamma$ is testable since it contains the subset $\B_S= S \cup \{ss'~|~ s,s'\in S$, $s\ne s' \}$ which is a test subset according to Theorem \ref{test}. According now to Theorem \ref{auto-test}, $QC(\Gamma,G)$ is an open set of discontinuity of $X(\Gamma,G)$, this proves Proposition \ref{QC} as promised.

\subsection{Primitive stable representations of a free group}

Let $\Gamma=F_k$ be a free group of rank $k\geqslant 2$, $S$ be a minimal set of generators and $\p=\{\varphi(s)~|~ \varphi\in Aut(F_k), s\in S \}$ be the set of \textit{primitive} elements of $F_k$.
The subset $\p$ is clearly invariant by automorphisms of $F_k$ and contains the test subset $\B_S$, therefore $\p$ is testable.\\

The set $S_{\p}(F_k,G)$ of \textit{primitive-stable} representations is an open set of discontinuity of $X(F_k,G)$ for the action of $Out(F_k)$ according to Theorem \ref{auto-test}.\\

Minsky (\cite{Minsky}, Theorem 1.1) actually proved for $G=PSL_2(\mathbb{C})$ :

\begin{thm}
$S_{\p}(F_k,PSL_2(\mathbb{C}))$ is an open set of discontinuity of $X(F_k,PSL_2(\mathbb{C}))$ for the action of $Out(F_k)$ which is stricly larger than the set $CC(F_k,PSL_2(\mathbb{C}))$ of convex cocompact representations.
\end{thm}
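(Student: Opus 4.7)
The proof decomposes naturally into three parts: (i) $S_{\p}(F_k,PSL_2(\C))$ is $Out(F_k)$-invariant and open, (ii) the action of $Out(F_k)$ on it is properly discontinuous, and (iii) the inclusion $CC(F_k,PSL_2(\C))\subseteq S_{\p}(F_k,PSL_2(\C))$ is strict. Parts (i) and (ii) are already essentially in place in the paper: the set $\p$ of primitive elements is $Aut(F_k)$-invariant by definition and contains the subset $\B_S=S\cup\{ss'\mid s,s'\in S,\;s\neq s'\}$, which is a test subset by Theorem \ref{test}. Hence $\p$ itself is a test subset, so it is testable, and Theorem \ref{auto-test} (i.e.\ Theorem \ref{Apd}) applied with $A=\p$ yields immediately that $S_{\p}(F_k,PSL_2(\C))$ is an open set of discontinuity of $X(F_k,PSL_2(\C))$ for the $Out(F_k)$-action. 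The (non-strict) inclusion $CC(F_k,PSL_2(\C))\subseteq S_{\p}(F_k,PSL_2(\C))$ is nothing but Proposition \ref{first} for $A=\p$, using that convex cocompact representations into $PSL_2(\C)$ are quasi-convex.

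So the only genuine content left is the strict inequality, which is the substance of Minsky's original result. The plan is to exhibit an explicit $[\rho]\in S_{\p}(F_k,PSL_2(\C))\setminus CC(F_k,PSL_2(\C))$. For $k=2$, the candidate is the holonomy $\rho\colon F_2=\langle a,b\rangle\to PSL_2(\C)$ of a complete finite-volume hyperbolic structure on a once-punctured torus, normalised so that $\rho([a,b])$ is parabolic. Such a $\rho$ is discrete and faithful but has a parabolic in its image, so its image is not convex cocompact in $PSL_2(\C)$; hence $[\rho]\notin CC(F_k,PSL_2(\C))$.

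The core difficulty, and where I expect the real work to be, is verifying that this $\rho$ is primitive-stable. The plan is to use the classical bijection between primitive conjugacy classes in $F_2$ and homotopy classes of essential simple closed curves on the punctured torus, parametrised by the Farey tessellation of $\mathbb{H}^2$. Each primitive element is represented by a simple closed geodesic in the quotient 3-manifold whose length is bounded below away from $0$ (the systole of the thick part of the cusped manifold). One then lifts: the $\rho$-axis of any primitive $a\in\p$ meets the thick part of $\h$, and by the combinatorial structure of the Farey graph one can control how successive subwords of $a^n$ (written in the generators $S$) displace the basepoint. Concretely, one estimates the Gromov product of endpoints of $\Or(l_a)$ for $l_a$ a geodesic in $L_S(a)$ using the geometry of the cusp and the local thickness, to obtain uniform quasi-geodesic constants $(K,C)$ independent of $a\in\p$. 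This is exactly the technical heart of \cite{Minsky} and is the step where I would expect to spend the bulk of the argument.

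For $k\geq 3$ one adapts the example: either take a geometrically finite representation of $F_k$ realising $F_2$ as a factor and sending $[a,b]$ (or a longer non-primitive word) to a parabolic while keeping the other generators loxodromic and in Schottky position, or pass to a geometrically infinite limit whose ending lamination is not carried by any primitive — in either case the same combinatorial control over primitives via the Whitehead/Masur graph gives primitive-stability while preventing convex cocompactness. The upshot is that the strict inclusion in every rank $k\geq 2$ reduces to one explicit family of examples plus the uniform quasi-geodesic estimate on axes of primitives.
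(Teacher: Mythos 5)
Your parts (i) and (ii), together with the non-strict inclusion, follow the paper exactly: the paper deduces the open-set-of-discontinuity statement from Theorem \ref{auto-test} after observing that $\p$ is $Aut(F_k)$-invariant and contains the test subset $\B_S$ of Theorem \ref{test}, and the containment $CC(F_k,PSL_2(\C))=QC(F_k,PSL_2(\C))\subset S_{\p}(F_k,PSL_2(\C))$ is Proposition \ref{first}. Be aware, though, that the paper does not prove the strictness at all --- it is quoted verbatim as Minsky's Theorem 1.1 --- so the one part you attempted to prove from scratch is precisely the part the paper outsources.

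On that part there is a genuine error in your construction for $k\geqslant 3$. If $\langle a,b\rangle$ is a proper free factor of $F_k=\langle a,b,c,\dots\rangle$ and $\rho([a,b])$ is parabolic while the remaining generators are kept loxodromic and in Schottky position, the resulting representation is \emph{not} primitive-stable. Indeed $c[a,b]^n$ is primitive for every $n$ (it belongs to the basis $\{a,b,c[a,b]^n,\dots\}$ obtained by a Nielsen move), and it is cyclically reduced, so $l_S(c[a,b]^n)=4n+1$ grows linearly; yet $l(\rho(c[a,b]^n))\leqslant d(x,\rho(c)x)+d(x,\rho([a,b])^nx)=O(\log n)$ because $\rho([a,b])$ is parabolic. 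Hence $\rho$ fails to be $\p$-well-displacing, and by Proposition \ref{displacing} it cannot be $\p$-stable. This is exactly why Minsky requires the pinched curve to be ``blocking''/diskbusting, in particular not contained in any proper free factor: the commutator $[a,b]$ only has this property when $k=2$, and for higher rank one must pinch a different word (or justify the geometrically infinite alternative, which you only gesture at).

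For $k=2$ your candidate (the Fuchsian holonomy of a finite-area once-punctured torus, with $\rho([a,b])$ parabolic) is the right one, and it is clear that it is not convex cocompact. But the verification that it \emph{is} primitive-stable --- the uniform $(K,C)$ over all primitives, obtained from the fact that simple closed geodesics avoid a definite cusp neighbourhood together with a local-to-global argument --- is the entire content of Minsky's theorem, and in your write-up it is announced rather than carried out. As it stands the strict inclusion is not established in any rank.
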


We give a recap of the previous inclusions between $Out(\Gamma)$-invariant subsets of $X(F_k,PSL_2(\mathbb{C}))$ :

$$CC(F_k,PSL_2(\mathbb{C})) \varsubsetneq AH(F_k,PSL_2(\mathbb{C})) \varsubsetneq S_{\p}(F_k,PSL_2(\mathbb{C})) \subset D_{\p}(F_k,PSL_2(\mathbb{C})) $$

We know that primitive stable representations are $\p$-well-displacing according to Proposition \ref{displacing} but we don't know if the converse holds in general. However, the answer is known when $k=2$ (see \cite{Series} or \cite{Xu} when $G=PSL_2(\mathbb{C})$ and \cite[Theorem 1.3]{Schlich} for the following general result) :

\begin{prop}
Let $F_2$ be the free group of rank 2 and $G$ the isometry group of a Gromov-hyperbolic, proper and geodesic metric space.\\
A representation $\rho\in R(F_2,G)$ is $\p$-stable if and only if $\rho$ is $\p$-well-displacing, i.e., 
$$S_{\p}(F_2,G)=D_{\p}(F_2,G)$$
\end{prop}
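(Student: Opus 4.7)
The forward direction $S_\p(F_2,G) \subset D_\p(F_2,G)$ is immediate from Proposition \ref{displacing}. For the converse, suppose $\rho \in R(F_2,G)$ is $(J,B)$ $\p$-well-displacing. My plan is to adapt the strategy of Theorem \ref{A-bien} to the primitive set $\p$, which is $Aut(F_2)$-invariant but not a subgroup. The crucial rank-$2$ feature to leverage is that every primitive $a$ extends to a basis $\{a,b\}$ of $F_2$, and then the Nielsen transformations $\{a,b\} \mapsto \{a, a^k b^{\pm 1}\}$ ensure that the elements $a^k b^{\pm 1}$ remain primitive for every $k \in \mathbb{Z}$, providing a rich family of primitives related to $a$ on which the well-displacing bound applies.

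First I would show that $\rho(a)$ is hyperbolic for every primitive $a$. Extending $a$ to a basis $\{a,b\}$: if $\rho(a)$ were elliptic, fixing some $p \in X$, then each $\rho(a)^k$ would preserve the sphere of radius $d(p, \rho(b)(p))$ around $p$, so that $d(p, \rho(a^k b)(p)) = d(p, \rho(b)(p))$ is bounded in $k$, contradicting the well-displacing lower bound $l(\rho(a^k b)) \geq \tfrac{1}{J}|a^k b|_S - B$ which grows linearly in $k$. An analogous argument using that parabolic isometries of a Gromov-hyperbolic space displace orbits sublinearly rules out the parabolic case. Next, by Lemmas \ref{l1}, \ref{l2} and \ref{l3}, it is enough to produce uniform constants $K, C$ such that for every primitive $a$ of minimal word length in its conjugacy class, the orbit $\Or(\{a^n\}_{n \in \mathbb{Z}})$ is a $(K,C)$ quasi-geodesic in $X$. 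Mimicking the argument of Theorem \ref{A-bien}, I would exhibit primitive elements $b_1, b_2$ (depending on $a$) with $\{a, b_i\}$ bases of $F_2$, so that $b_i a^n$ remains primitive for all $n$; then applying well-displacing to $a$, $b_1 a^n$ and $b_2 a^n$, combined with the triangle inequality and a version of the ping-pong estimate of Proposition \ref{P1} adapted to $\p$, should yield a lower bound of the form $d(x, \rho(a^n)(x)) \geq \tfrac{|a^n|_S}{K'} - C'$ with $K', C'$ independent of $a$.

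The main obstacle will be making the auxiliary data uniform: the companion primitives $b_1, b_2$ must be chosen so that $|b_i|_S$ (or at least the quantities $d(x, \rho(b_i)(x))$) and the ping-pong constants of the pair $(\rho(b_1), \rho(b_2))$ in $X$ are controlled independently of $a$. This should be accessible using the Farey-graph combinatorics of primitive conjugacy classes in $F_2$: every primitive has companions of controlled complexity, and the whole primitive set is generated from the standard basis by Nielsen moves in a well-structured way. These rigidity features are specific to rank $2$ and have no direct counterpart in $F_k$ for $k \geq 3$, which is consistent with the equivalence $S_\p = D_\p$ remaining open for higher-rank free groups.
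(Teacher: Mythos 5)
There is a genuine gap, and it sits exactly where you place your ``main obstacle''. First, for context: the paper does not prove this proposition at all --- it is quoted from the literature (Series and Lee--Xu for $PSL_2(\mathbb{C})$, Schlich for general Gromov-hyperbolic targets), and those are substantial papers in their own right. Your forward direction via Proposition \ref{displacing} is correct, and your argument that $\rho(a)$ is hyperbolic for every primitive $a$ (using that $a^k b$ stays primitive and that $l_S(a^k b)\to\infty$, while an elliptic or parabolic $\rho(a)$ would force $d(p,\rho(a^kb)(p))$ to grow sublinearly) is sound. But the heart of the converse is the uniform lower bound $d(x,\rho(a^n)(x))\geqslant \frac{1}{K'}|a^n|_S - C'$, and the proposed adaptation of Proposition \ref{P1} does not deliver it.

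The reason the ping-pong route breaks is structural. In Theorem \ref{A-bien} the companions $a_1,a_2$ are \emph{fixed once and for all} (because $A$ is a subgroup, $a_1a$ and $a_2a$ stay in $A$ for every $a\in A$), so the error terms $\alpha$, $B$ and $\max_i d(x,\rho(a_i)(x))$ are absolute constants that are eventually dominated by $\frac{1}{3J}|a|_S$. For $\p$ you are forced to take $b_1,b_2$ depending on $a$ (so that $\{a,b_i\}$ is a basis and $b_ia^n$ stays primitive), and then neither the ping-pong constant $\alpha=\alpha(b_1,b_2)$ of Lemma \ref{L2} nor the terms $d(x,\rho(b_i)(x))\leqslant \kappa|b_i|_S$ are uniform: in the Farey parametrization of primitive conjugacy classes of $F_2$, the shortest basis-companion of a primitive $a$ has word length comparable to $|a|_S$ itself, so the error terms are of the same order as the main term $\frac{1}{3J}|a^n|_S$ already at $n=1$, and the inequality collapses. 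There is also a secondary issue: Lemma \ref{L2} bounds $|a^n|_S$ by $3\max\{l_S(a^n), l_S(b_1a^n), l_S(b_2a^n)\}+\alpha$, and when the maximum is attained by $l_S(a^n)$ you cannot invoke well-displacing, since $a^n$ is not primitive for $|n|\geqslant 2$. Overcoming these two points is precisely the content of the cited theorems, which proceed by an entirely different mechanism (Bowditch's Fibonacci-growth and attracting-subtree arguments along the Farey tree of primitive classes), not by a uniformized ping-pong estimate. As written, your sketch reduces the proposition to an unproved claim that is essentially equivalent to the theorem itself.
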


\subsection{Commutator-stable representations}

Let $\Gamma$ be a non-elementary hyperbolic group and let $\mathcal{C}=\{[g,h]~|~ g,h\in \Gamma \}$ denotes the subset of commutators of $\Gamma$.

\begin{prop}\label{CS1}
The set $S_\mathcal{C}(\Gamma,G)$ of conjugacy classes of commutator-stable representations is an open set of discontinuity of $X(\Gamma,G)$ for the action of $Out(\Gamma)$.
\end{prop}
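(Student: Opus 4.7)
The proof is essentially a matter of assembling the tools already developed in the paper; there is no single hard step.

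First I would verify that $\mathcal{C}$ is $Aut(\Gamma)$-invariant, which is immediate: for any $\varphi \in Aut(\Gamma)$ and any commutator $[g,h] \in \mathcal{C}$, one has $\varphi([g,h]) = [\varphi(g), \varphi(h)] \in \mathcal{C}$. Consequently, Proposition \ref{ouvert} applies to $A = \mathcal{C}$ and yields at once that $S_\mathcal{C}(\Gamma,G)$ is an open, $Out(\Gamma)$-invariant subset of $X(\Gamma,G)$.

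It remains to establish that the action of $Out(\Gamma)$ on $S_\mathcal{C}(\Gamma,G)$ is properly discontinuous. For this I would appeal to Theorem \ref{auto-test}, which tells us that if $\mathcal{C}$ is testable, then $S_\mathcal{C}(\Gamma,G)$ is an open set of discontinuity. But the testability of $\mathcal{C}$ under the assumption that $\Gamma$ is non-elementary is precisely the content of Proposition \ref{well-test}, whose proof used the fact that the based $\Gamma$-tree induced by an unbounded sequence in $Out(\Gamma)$ is non-trivial and small, hence (passing to the minimal subtree) irreducible, and therefore contains a hyperbolic commutator.

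Chaining these three inputs \textemdash{} $Aut(\Gamma)$-invariance of $\mathcal{C}$, Proposition \ref{ouvert}, and the combination of Proposition \ref{well-test} with Theorem \ref{auto-test} \textemdash{} concludes the proof. Since every ingredient is already available, the only thing to be careful about is invoking the non-elementarity hypothesis, which is needed for Proposition \ref{well-test} (a non-elementary hyperbolic group has a non-trivial derived subgroup containing a rank-two free group, so the commutator set is rich enough to detect hyperbolic isometries on any small irreducible $\Gamma$-tree).
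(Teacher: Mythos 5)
Your argument is correct and matches the paper's proof, which simply cites the combination of Proposition \ref{well-test} (testability of $\mathcal{C}$, requiring non-elementarity) and Theorem \ref{auto-test}; your extra appeal to Proposition \ref{ouvert} is harmless but redundant, since Theorem \ref{auto-test} already delivers openness and $Out(\Gamma)$-invariance as part of the conclusion ``open set of discontinuity.''
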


\begin{proof}
The proof is a direct combination of Proposition \ref{well-test} and Theorem \ref{auto-test}.
\end{proof}

We now recall some basic facts on convergence actions as defined by Gehring and Martin and use it to get informations on translation lengths. We recall that an isometry $g\in G$ extends to a homeomorphism of the Gromov compactification $\overline{X}$, which will again be denoted $g$.\\

\begin{defi}
Let $(g_n)$ be a sequence of elements of $G$ and $a,b\in \partial X$.
We say that $(g_n)$ is a \textit{convergence sequence with base $(a,b)$} if the restriction of $(g_n)$ to $\overline{X}\setminus \{a\}$ converges to $b$ uniformly on compact subsets.
\end{defi}

One can check that : 

\begin{prop}\label{collapse}
Let $a,b\in \partial X$ and $(g_n)$ be a convergence sequence with base $(a,b)$ :
\begin{itemize}
\item The sequence $(g_n^{-1})$ is a convergence sequence with base $(b,a)$.
\item If $h : X \rightarrow X$ is a homeomorphism, then $(h\circ g_n \circ h^{-1})$ is a convergence sequence with base $(h(a), h(b))$.
\item If $c,d\in \partial X$, $b\ne c$ and $(h_n)$ is a convergence sequence with base $(c,d)$, then $(h_ng_n)$ is a convergence sequence with base $(a,d)$.
\end{itemize}
\end{prop}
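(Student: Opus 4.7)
All three items are formal consequences of the definition of a convergence sequence; the proof reduces to bookkeeping with uniform convergence on compact subsets, using that $\overline{X}$ is compact and Hausdorff (which holds because $X$ is proper Gromov-hyperbolic). I would treat the three bullets in order; the only mildly delicate point is in the first, where one has to extract a subsequence carefully.

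For the first bullet, I plan to argue by contradiction. Suppose $(g_n^{-1})$ does not converge uniformly to $a$ on some compact $K \subset \overline{X}\setminus\{b\}$; then there is an open neighborhood $V$ of $a$ and points $y_n \in K$ with $g_n^{-1}(y_n) \notin V$. Using compactness of $K$ and of the closed set $\overline{X}\setminus V$ inside the compact space $\overline{X}$, I can pass to a subsequence along which $y_n \to y \in K$, so $y \ne b$, and $g_n^{-1}(y_n) \to z \notin V$, so $z \ne a$. Choosing a compact neighborhood $W$ of $z$ contained in $\overline{X}\setminus\{a\}$, the hypothesis on $(g_n)$ gives $g_n \to b$ uniformly on $W$, and since $g_n^{-1}(y_n)$ lies eventually in $W$, the identity $y_n = g_n(g_n^{-1}(y_n))$ forces $y_n \to b$, contradicting $y_n \to y \ne b$.

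The second bullet is a one-line conjugation argument: if $K$ is a compact subset of $\overline{X}\setminus\{h(a)\}$, then $h^{-1}(K)$ is a compact subset of $\overline{X}\setminus\{a\}$ on which $g_n \to b$ uniformly; applying $h$ on the left shows that $h \circ g_n \circ h^{-1} \to h(b)$ uniformly on $K$.

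For the third bullet, the strategy is to use $b \ne c$ to place the $g_n$-image inside the good region for $h_n$. Fix an open neighborhood $U$ of $b$ with $c \notin \overline{U}$, so that $\overline{U}$ is a compact subset of $\overline{X}\setminus\{c\}$; the hypothesis on $(h_n)$ then gives $h_n \to d$ uniformly on $\overline{U}$. Given a compact $K \subset \overline{X}\setminus\{a\}$ and an arbitrary open neighborhood $V$ of $d$, uniform convergence of $g_n$ on $K$ yields $g_n(K) \subset U$ for $n$ large, and uniform convergence of $h_n$ on $\overline{U}$ yields $h_n(\overline{U}) \subset V$ for $n$ large; combining, $h_n g_n(K) \subset V$ eventually, which is precisely the required uniform convergence of $h_n g_n$ to $d$ on $K$.
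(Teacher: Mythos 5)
Your proof is correct. The paper itself offers no argument for this proposition --- it is introduced with ``One can check that'' and left as an exercise --- so there is nothing to compare against; your three verifications (the compactness/subsequence contradiction for $(g_n^{-1})$, the conjugation bookkeeping for $h g_n h^{-1}$, and the use of $b \ne c$ to route $g_n(K)$ into the good region $\overline{U}$ for $(h_n)$) are exactly the standard arguments and are complete, using only that $\overline{X}$ is compact Hausdorff and that isometries extend to homeomorphisms of $\overline{X}$.
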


A proof of the next theorem can be found in \cite[Theorem 3A]{Tukia} :

\begin{thm}\label{Tukia}
Let $(g_n)$ be a sequence of elements of $G$. One of the two following proposals is verified :
 
\begin{itemize}
\item The sequence $(g_n)$ is relatively compact.
\item Up to subsequence, there are $a,b\in \partial X$ such that $(g_n)$ is a convergence sequence with base $(a,b)$.
\end{itemize}

\end{thm}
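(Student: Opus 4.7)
The plan is to dichotomize on whether the orbit of a fixed basepoint $o \in X$ is bounded, and to show that each alternative in the theorem corresponds to one case. If $d(o, g_n(o)) \leqslant D$ uniformly in $n$, then for any compact $K \subset X$ the union $\bigcup_n g_n(K)$ is contained in the closed ball of radius $D + \mathrm{diam}(K \cup \{o\})$, which is compact by properness of $X$. Since each $g_n$ is $1$-Lipschitz, the family is equicontinuous, and the Arzela--Ascoli theorem yields a subsequence converging in the compact-open topology; a standard argument shows the limit is again an isometry. Hence $(g_n)$ is relatively compact in $G$.

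Otherwise, after extracting I may assume $d(o, g_n(o)) \to \infty$. Properness of $X$ makes $\overline{X}$ a compact metrizable space, so successive extractions yield subsequences with $g_n(o) \to b$ and $g_n^{-1}(o) \to a$ for some $a, b \in \partial X$. The claim is then that this subsequence is a convergence sequence with base $(a, b)$, i.e.\ that $g_n|_K \to b$ uniformly for every compact $K \subset \overline{X}\setminus\{a\}$.

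For $x \in K \cap X$ this is immediate: $d(g_n(x), g_n(o)) = d(x, o)$ is uniformly bounded on $K \cap X$, while $g_n(o) \to b \in \partial X$, so $g_n(x) \to b$ uniformly. For $\xi \in K \cap \partial X$, set $w_n := g_n^{-1}(o) \to a$. Since $\xi \neq a$ while $w_n \to a$, geodesics from $w_n$ to $o$ and from $w_n$ to $\xi$ both depart $w_n$ heading ``away from $a$'' and therefore fellow-travel for most of their length. Quantitatively this gives
$$\langle \xi, o \rangle_{w_n} \;\geqslant\; d(o, w_n) - C,$$
with $C$ uniform as $\xi$ ranges over a compact subset of $\overline{X}\setminus\{a\}$. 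By isometry invariance of the Gromov product, $\langle g_n(\xi), g_n(o) \rangle_o = \langle \xi, o \rangle_{w_n} \to \infty$. Combined with $g_n(o) \to b$ and the $\delta$-hyperbolicity inequality $\langle g_n(\xi), b \rangle_o \geqslant \min(\langle g_n(\xi), g_n(o) \rangle_o, \langle g_n(o), b \rangle_o) - \delta$, this yields $g_n(\xi) \to b$ with the required uniformity.

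The main obstacle is precisely this last Gromov product estimate: making the bound $\langle \xi, o\rangle_{w_n} \geqslant d(o,w_n) - C$ rigorous for $\xi \in \partial X$ requires extending the Gromov product to boundary points (where it is only well-defined up to a bounded error depending on $\delta$), and the uniformity of $C$ over compact subsets of $\overline{X}\setminus\{a\}$ is what makes the convergence uniform rather than pointwise. This is a standard piece of hyperbolic boundary theory; once it is in place the rest of the proof consists of routine manipulations of the hyperbolicity inequality.
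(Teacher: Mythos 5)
The paper offers no proof of this statement; it is imported verbatim from Tukia \cite[Theorem 3A]{Tukia} and used as a black box. Your argument is, however, essentially the standard proof (and essentially Tukia's): dichotomize on boundedness of the orbit of a basepoint; in the bounded case use properness and Arzel\`a--Ascoli applied simultaneously to $(g_n)$ and $(g_n^{-1})$ so that the limit is a surjective isometry; in the unbounded case extract so that $g_n(o)\to b$ and $w_n:=g_n^{-1}(o)\to a$ in $\partial X$ and control Gromov products. The estimate you single out as the main obstacle, $\langle \xi,o\rangle_{w_n}\geqslant d(o,w_n)-C$ uniformly over compact $K\subset\overline{X}\setminus\{a\}$, is correct and standard: it follows from the identity $\langle \xi,o\rangle_{w_n}+\langle \xi,w_n\rangle_o=d(o,w_n)$ (exact for interior points, valid up to $2\delta$ for $\xi\in\partial X$) together with the fact that $\sup_{\xi\in K}\limsup_n\langle \xi,w_n\rangle_o<\infty$ whenever $w_n\to a$ and $K$ is a compact subset of $\overline{X}\setminus\{a\}$ --- the latter being exactly what convergence to $a$ in the Gromov topology means. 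From there $\langle g_n(\xi),g_n(o)\rangle_o=\langle \xi,o\rangle_{w_n}\to\infty$ uniformly on $K$, and the hyperbolicity inequality with $g_n(o)\to b$ gives uniform convergence $g_n(\xi)\to b$.

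One concrete slip: you dispose of $K\cap X$ by asserting that $d(x,o)$ is uniformly bounded there, but a compact subset $K$ of $\overline{X}\setminus\{a\}$ may accumulate on $\partial X$, so $K\cap X$ need not be bounded in $X$ (take $K=\overline{X}$ minus a small neighbourhood of $a$). The repair is to not split the argument by point type at all: the Gromov-product estimate above applies uniformly to every $\xi\in K$, interior or boundary, since $\langle \xi,w_n\rangle_o$ is bounded on all of $K$; the bounded-distance argument is then only needed, if at all, as a special case. With that adjustment the proof is complete and matches the cited source in both structure and substance.
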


We will need the following lemmas (see \cite[Corollary 2E]{Tukia} for a proof of the first one) :

\begin{lem}\label{ll1}
Let $a,b\in \partial X$ such that $a\ne b$. If $(g_n)$ is a convergence sequence with base $(a,b)$, then $g_n$ is hyperbolic for $n$ large enough and
$$g_n^- \rightarrow a \qquad \text{and} \qquad g_n^+ \rightarrow b$$
where $g_n^-$ and $g_n^+$ denote the distinct fixed points in $\partial X$ of $g_n$. 
\end{lem}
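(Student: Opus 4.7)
The plan is to prove hyperbolicity of $g_n$ for large $n$ via the trichotomy (elliptic, parabolic, hyperbolic) of isometries, ruling out the first two cases by a dynamical argument on $\overline{X}$, and then to identify the limits of the fixed points. Since $a\ne b$ and $\overline{X}$ is compact Hausdorff, I pick disjoint closed neighbourhoods $\overline{U}\ni a$ and $\overline{V}\ni b$. The set $\overline{X}\setminus U$ is a compact subset of $\overline{X}\setminus\{a\}$, so the uniform convergence of $g_n|_{\overline{X}\setminus\{a\}}$ to the constant $b$ yields $N_0$ with $g_n(\overline{X}\setminus U)\subset V$ for all $n\ge N_0$. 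By Proposition \ref{collapse}, $(g_n^{-1})$ is a convergence sequence with base $(b,a)$, so after enlarging $N_0$ one also has $g_n^{-1}(\overline{X}\setminus V)\subset U$. Fix a basepoint $o\in X\setminus(U\cup V)$; since $U\cap V=\emptyset$, a straightforward induction gives $g_n^k(o)\in V$ and $g_n^{-k}(o)\in U$ for all $k\ge 1$ and all $n\ge N_0$.

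I now argue that, for $n\ge N_0$, the isometry $g_n$ can be neither parabolic nor elliptic. If $g_n$ were parabolic with unique fixed point $p_n\in\partial X$, then $g_n^{\pm k}(o)\to p_n$ would force $p_n\in\overline{U}\cap\overline{V}=\emptyset$, absurd. If $g_n$ were elliptic with fixed point $q_n\in X$, then $d(q_n,g_n^{\pm 1}(o))=d(q_n,o)$ would give
$$\langle q_n,g_n(o)\rangle_o=\tfrac{1}{2}\bigl(d(o,q_n)+d(o,g_n(o))-d(q_n,g_n(o))\bigr)=\tfrac{1}{2}d(o,g_n(o)),$$
and symmetrically $\langle q_n,g_n^{-1}(o)\rangle_o=\tfrac{1}{2}d(o,g_n^{-1}(o))$. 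Since $g_n(o)\to b$ and $g_n^{-1}(o)\to a$ lie in $\partial X$, the distances $d(o,g_n(o))=d(o,g_n^{-1}(o))$ tend to $\infty$, so both Gromov products tend to $\infty$. Extracting a subsequence along which $q_n\to q_\infty\in\overline{X}$, the standard Gromov-hyperbolic fact that a sequence with divergent Gromov product to a sequence converging to $\xi\in\partial X$ also converges to $\xi$ would force both $q_\infty=b$ and $q_\infty=a$; hence $a=b$, a contradiction.

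By the trichotomy, $g_n$ is hyperbolic for $n\ge N_0$, with attracting and repelling fixed points $g_n^+,g_n^-\in\partial X$. The iterates $g_n^k(o)\to g_n^+$ lie in $V$ for $k\ge 1$, so $g_n^+\in\overline{V}$; similarly $g_n^-\in\overline{U}$. Because $\overline{U},\overline{V}$ were arbitrary disjoint closed neighbourhoods of $a,b$, shrinking them (and enlarging $N_0$ each time) confines $g_n^+$ to arbitrarily small neighbourhoods of $b$ and $g_n^-$ to arbitrarily small neighbourhoods of $a$, so $g_n^+\to b$ and $g_n^-\to a$. The only delicate step is the elliptic exclusion, which relies on the standard fact that in a Gromov-hyperbolic space $\langle x_n,y_n\rangle_o\to\infty$ together with $x_n\to\xi\in\partial X$ forces every subsequential limit of $(y_n)$ to equal $\xi$; the rest is bookkeeping with the forward/backward dichotomy between $U$ and $V$.
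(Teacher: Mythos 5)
Your argument is correct, but it takes a genuinely different route from the paper: the paper does not prove Lemma \ref{ll1} at all, it simply cites Tukia \cite[Corollary 2E]{Tukia}, whereas you give a self-contained ping-pong proof. Your structure is sound: the uniform convergence on $\overline{X}\setminus U$ and on $\overline{X}\setminus V$ gives the nesting $g_n(\overline{X}\setminus U)\subset V$, $g_n^{-1}(\overline{X}\setminus V)\subset U$, hence the forward orbit of $o$ stays in $V$ and the backward orbit in $U$; a parabolic is excluded for each $n\geqslant N_0$ because its orbit converges to the unique fixed point in both time directions, which would have to lie in $\overline{U}\cap\overline{V}=\emptyset$; an elliptic is excluded asymptotically by your Gromov-product computation $\langle q_n,g_n^{\pm 1}(o)\rangle_o=\tfrac{1}{2}d(o,g_n^{\pm 1}(o))\to\infty$, forcing $q_n$ to converge simultaneously to $a$ and to $b$; and the localization of $g_n^{\pm}$ in $\overline{U}$, $\overline{V}$ plus shrinking gives the convergence of fixed points. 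Two small points worth making explicit if this were to be written up: (i) you should choose $U,V$ small enough that $X\setminus(U\cup V)$ actually contains a basepoint (trivially arranged, and harmless since you shrink them anyway); (ii) the parabolic step silently uses that a non-elliptic isometry of a proper geodesic hyperbolic space satisfies $d(o,g^k(o))\to\infty$ and that every boundary accumulation point of its orbit is fixed, so that a parabolic orbit converges to its unique fixed point in both directions --- this is standard (it is in the paper's reference \cite{Quasi-geo}) but is not stated in Section 2.2, which only records uniqueness of the fixed point. What your approach buys is independence from Tukia's convergence-group machinery at the cost of invoking these two standard facts about the boundary; what the citation buys is brevity.
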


Until the end of this section, we will deliberately confuse the elements of $\Gamma$ with their images in $Isom(C_S(\Gamma))$ so as not to complicate the notations.

\begin{lem}\label{ll2}
Let $a,b\in \partial C_S(\Gamma)$ such that $a\ne b$ and let $(\gamma_n)$ be a sequence of elements of $\Gamma$ which is a convergence sequence with base $(a,b)$. The sequence $(l_S(\gamma_n))_n$ is not bounded.
\end{lem}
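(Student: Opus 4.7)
The plan is to argue by contradiction: assume that $(l_S(\gamma_n))_n$ is bounded, say by some $M\geqslant 0$, and derive two incompatible estimates on $d_S(e, \gamma_n(e))$. Since $a\ne b$, Lemma~\ref{ll1} guarantees that $\gamma_n$ is hyperbolic in $\Gamma$ for all large enough $n$, with fixed points $\gamma_n^- \to a$ and $\gamma_n^+ \to b$ in $\partial C_S(\Gamma)$. Lemma~\ref{l1} then provides a $(K',C')$ quasi-axis $l_n$ for $\gamma_n$, with $K',C'$ depending only on $\delta_S$, on which $\gamma_n$ acts by translation of length $l_S(\gamma_n)\leqslant M$. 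The idea is to use this quasi-axis to bound $d_S(e, \gamma_n(e))$ from above, while the convergence sequence hypothesis forces $d_S(e,\gamma_n(e)) \to \infty$.

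First, every point $p\in l_n$ satisfies $d_S(p,\gamma_n(p))\leqslant M$. Indeed, writing $l_n=\bigcup_k \gamma_n^k\cdot [w_n,\gamma_n w_n]$ as in Lemma~\ref{l1} with $d_S(w_n,\gamma_n w_n)=l_S(\gamma_n)\leqslant M$, any such $p$ is of the form $\gamma_n^k(q)$ for some $q\in[w_n,\gamma_n w_n]$, and since $\gamma_n$ is an isometry
$$d_S(p,\gamma_n(p))=d_S(q,\gamma_n(q))\leqslant d_S(q,\gamma_n w_n)+d_S(\gamma_n w_n,\gamma_n q)\leqslant l_S(\gamma_n)\leqslant M.$$
Second, the distance $D_n:=d_S(e,l_n)$ stays bounded uniformly in $n$. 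Because $\gamma_n^-\to a$ and $\gamma_n^+\to b$ with $a\ne b$, the Gromov products $\langle \gamma_n^-,\gamma_n^+\rangle_e$ remain bounded, so every geodesic of $C_S(\Gamma)$ joining $\gamma_n^-$ to $\gamma_n^+$ comes within a uniformly bounded distance of $e$; the Morse stability of $(K',C')$-quasi-geodesics in the $\delta_S$-hyperbolic space $C_S(\Gamma)$ then yields the same bound for the quasi-axis $l_n$.

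Choosing $p_n\in l_n$ with $d_S(e,p_n)=D_n$ and combining the two facts above,
$$d_S(e,\gamma_n(e))\leqslant d_S(e,p_n)+d_S(p_n,\gamma_n(p_n))+d_S(\gamma_n(p_n),\gamma_n(e))\leqslant 2D_n+M,$$
which is bounded independently of $n$. On the other hand, since $(\gamma_n)$ is a convergence sequence with base $(a,b)$ and $e\ne a$, the vertex $e$ lies in $\overline{C_S(\Gamma)}\setminus\{a\}$ and we obtain $\gamma_n(e)\to b\in\partial C_S(\Gamma)$. As $b$ is a point at infinity in the proper metric space $C_S(\Gamma)$, this forces $d_S(e,\gamma_n(e))\to\infty$, contradicting the uniform bound. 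Hence $(l_S(\gamma_n))_n$ cannot be bounded. The crux of the argument is the uniform control of $D_n$, which hinges on the standard dictionary between Gromov products, geodesics between boundary points, and the Morse lemma in hyperbolic geometry; everything else is bookkeeping around the quasi-axis structure.
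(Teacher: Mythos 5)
Your proof is correct and follows essentially the same route as the paper's: up to the choice of base point ($e$ rather than a point on a fixed geodesic from $a$ to $b$), both arguments bound the distance from the base point to the quasi-axis $l_n$ via the convergence $\gamma_n^\pm\to a,b$ together with Morse stability, then use the triangle inequality through a nearest point of $l_n$ to bound $d_S(\cdot,\gamma_n(\cdot))$ by twice that distance plus $l_S(\gamma_n)$, contradicting the convergence-sequence property. Your write-up is in fact a bit more explicit than the paper's (the displacement bound along the quasi-axis and the Gromov-product justification of the uniform bound on $D_n$), but there is no substantive difference.
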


\begin{proof}
Let $(\gamma_n)$ be a convergence sequence with base $(a,b)$ such that $a\ne b$ and $l$ be a geodesic of $C_S(\Gamma)$ connecting $a$ to $b$. According to Lemma \ref{ll1}, we can assume up to subsequence, that $\gamma_n$ is hyperbolic for all $n\in \mathbb{N}$. Lemma \ref{l1} gives us uniform constants $K'\geqslant 1, C'\geqslant 0$ such that $\gamma_n$ has a $(K',C')$ quasi-axis $l_n$ connecting $\gamma_n^-$ to $\gamma_n^+$.\\
By contradiction, let us suppose that $(l_S(\gamma_n))_n$ is bounded. Let $x\in l$ and for all $n\in\mathbb{N}$, choose $x_n\in l_n$ such that $d_S(x,x_n)=d_S(x,l_n)$. It follows that 

$$d_S(x,\gamma_n(x)) \leqslant 2d_S(x,x_n) + d_S(x_n,\gamma_n(x_n)) \qquad \forall n\in \mathbb{N}$$

Combining the second assertion of Lemma \ref{ll1} and the Morse lemma, we get that $d_S(x,x_n))$ is a bounded sequence. Hence, the assumption that $d_S(x_n,\gamma_n(x_n))= l_S(\gamma_n)$ is also bounded would contradict the fact that $(\gamma_n)$ is a convergence sequence.
\end{proof}

\begin{lem}\label{sort}
Let $(\gamma_n)$ be a sequence of distinct elements of $\Gamma$.\\
There is some element $h\in \Gamma$ such that $(l_S([\gamma_n,h]))$ is not bounded. 
\end{lem}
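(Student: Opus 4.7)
The plan is to combine Tukia's dichotomy (Theorem \ref{Tukia}) with the algebra of convergence sequences recalled in Proposition \ref{collapse} and Lemma \ref{ll2}. Since $\Gamma$ acts properly discontinuously on $C_S(\Gamma)$, its image in $G := \mathrm{Isom}(C_S(\Gamma))$ is discrete, so the sequence $(\gamma_n)$ of distinct elements cannot be relatively compact in $G$. By Theorem \ref{Tukia}, up to extracting a subsequence (which is harmless, since unboundedness along a subsequence implies unboundedness of the full sequence), there exist $a, b \in \partial C_S(\Gamma)$ such that $(\gamma_n)$ is a convergence sequence with base $(a, b)$.

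Next, I will choose $h \in \Gamma$ such that $h(a) \ne a$ and $h(b) \ne b$. Such an element exists because $\Gamma$ is non-elementary (the standing assumption of Section 6.3): no boundary point is fixed by all of $\Gamma$, so the stabilizers $\mathrm{Stab}(a)$ and $\mathrm{Stab}(b)$ are both proper subgroups of $\Gamma$, and a group is never the set-theoretic union of two proper subgroups.

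With this $h$ fixed, the first two items of Proposition \ref{collapse} successively show that $(\gamma_n^{-1})$ has convergence base $(b, a)$ and that $(h\gamma_n^{-1}h^{-1})$ has convergence base $(h(b), h(a))$. Writing $[\gamma_n, h] = \gamma_n \cdot (h\gamma_n^{-1}h^{-1})$ and applying the third item of Proposition \ref{collapse} with $g_n := h\gamma_n^{-1}h^{-1}$ and $h_n := \gamma_n$, the compatibility hypothesis becomes $h(a) \ne a$, which holds by the choice of $h$; one concludes that $([\gamma_n, h])$ is a convergence sequence with base $(h(b), b)$. Since $h(b) \ne b$, Lemma \ref{ll2} then yields that $(l_S([\gamma_n, h]))$ is unbounded, as required. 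The only subtle point is the bookkeeping of base points through Proposition \ref{collapse}: the compatibility condition for the composition and the distinctness hypothesis of Lemma \ref{ll2} translate, respectively, into the two non-fixing conditions on $h$, both of which are provided by the same single use of non-elementarity.
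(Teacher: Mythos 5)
Your proof is correct and follows essentially the same route as the paper's: proper discontinuity rules out relative compactness, Theorem \ref{Tukia} provides a convergence base $(a,b)$ after passing to a subsequence, Proposition \ref{collapse} shows $[\gamma_n,h]$ is a convergence sequence with base $(h(b),b)$, and Lemma \ref{ll2} concludes. The only (immaterial) difference is how $h$ is produced: the paper takes an infinite-order $h$ with $\{h^-,h^+\}$ disjoint from $\{a,b\}$, whereas you obtain $h\notin \mathrm{Stab}(a)\cup\mathrm{Stab}(b)$ via the fact that a group is never the union of two proper subgroups; both choices yield exactly the conditions $h(a)\ne a$ and $h(b)\ne b$ that the composition rule and Lemma \ref{ll2} require.
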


\begin{proof}
Since $\Gamma$ acts properly discontinuously on $C_S(\Gamma)$, $(\gamma_n)$ is not relatively compact in $Isom(C_S(\Gamma))$, hence according to Theorem \ref{Tukia}, there are $\gamma^-,\gamma^+ \in \partial C_S(\Gamma)$ such that up to subsequence, $(\gamma_n)$ is a convergence sequence with base $( \gamma^-,\gamma^+)$. Let $h\in \Gamma$ with infinite order such that $\{h^-,h^+\}$ is disjoint from $\{\gamma^-,\gamma^+\}$. According to Proposition \ref{collapse}, the sequence $(h\gamma_n^{-1}h^{-1})$ is a convergence sequence with base $(h(\gamma^+),h(\gamma^-))$, hence $[\gamma_n,h]$ is a convergence sequence with base $(h(\gamma^+), \gamma^+)$ where $\gamma^+ \ne h(\gamma^+)$. We conclude using Lemma \ref{ll2}.
\end{proof}

We are now able to prove the following sufficient condition for a representation to be discrete and have finite kernel :

\begin{prop}\label{CS2}
A $\mathcal{C}$-well-displacing representation of a hyperbolic group $\Gamma$ in any topological isometry group $G$ always has discrete image and finite kernel. 
\end{prop}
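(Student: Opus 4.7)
The plan is to argue by contradiction. I assume that $\rho$ either has infinite kernel or non-discrete image, and in either case I construct a sequence $(\eta_n)$ of distinct elements of $\Gamma$ with $\rho(\eta_n)\to e$ in $G$. Lemma \ref{sort} will then produce some $h \in \Gamma$ for which $l_S([\eta_n,h])$ is unbounded, while continuity in $G$ will force $l(\rho([\eta_n,h])) \to 0$. Feeding both into the $\mathcal{C}$-well-displacing inequality applied to the commutators $[\eta_n,h] \in \mathcal{C}$ yields the required contradiction.

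The construction of $(\eta_n)$ splits in two cases. If $\ker\rho$ is infinite, then $\ker\rho$ is an infinite subgroup of the hyperbolic group $\Gamma$, so by Proposition \ref{inf} it contains an element $\gamma$ of infinite order; the elements $\eta_n := \gamma^n$ are then distinct and all sent to the identity. If instead $\rho(\Gamma)$ fails to be discrete, some $g \in \rho(\Gamma)$ is not isolated, so there exist distinct elements $g_n \in \rho(\Gamma)\setminus\{g\}$ with $g_n \to g$; the products $h_n := g^{-1} g_n$ are then distinct non-identity elements of $\rho(\Gamma)$ converging to $e$, and choosing any lifts $\eta_n \in \rho^{-1}(h_n)$ produces a sequence of distinct elements of $\Gamma$ with $\rho(\eta_n) \to e$.

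Given $(\eta_n)$, Lemma \ref{sort} provides $h \in \Gamma$ such that $\bigl(l_S([\eta_n,h])\bigr)_n$ is unbounded. Continuity of the group operations in $G$ (which is what ``suitable topology'' means, and certainly holds for the compact-open topology used throughout) gives
\[ \rho([\eta_n,h]) = [\rho(\eta_n),\rho(h)] \longrightarrow [e,\rho(h)] = e. \]
Fixing a basepoint $x_0 \in X$, continuity of evaluation at $x_0$ yields $d(x_0,\rho([\eta_n,h])(x_0)) \to 0$, and since $l(g) \leq d(x_0,g(x_0))$ for any isometry $g$, we conclude $l(\rho([\eta_n,h])) \to 0$. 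The $\mathcal{C}$-well-displacing inequality then forces
\[ l_S([\eta_n,h]) \leq J\bigl(l(\rho([\eta_n,h])) + B\bigr), \]
which is bounded in $n$, contradicting the choice of $h$.

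The only delicate step is the non-discrete case: producing a sequence of genuinely distinct elements of $\Gamma$, rather than just of $\rho(\Gamma)$, whose images tend to the identity. The trick is first to renormalize inside $\rho(\Gamma)$ by left-multiplication by $g^{-1}$ so that the accumulation is moved to $e$ and the resulting elements $h_n$ are distinct and non-trivial; only then is it safe to pick preimages in $\Gamma$. The rest is just continuity and a direct application of the machinery of Lemma \ref{sort}, whose use implicitly relies on $\Gamma$ being non-elementary (so that hyperbolic elements with prescribed fixed points are available); the elementary case reduces to something essentially trivial and may be treated separately.
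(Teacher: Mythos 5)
Your argument is correct and follows essentially the same route as the paper: contraposition, a case split between infinite kernel and non-discrete image, Proposition \ref{inf} to extract an infinite-order element of the kernel, and Lemma \ref{sort} to produce a commutator with unbounded translation length in $C_S(\Gamma)$ but vanishing displacement under $\rho$. One difference is worth recording: in the infinite-kernel case the paper fixes a single $h$ with $\{h^-,h^+\}\cap\{\gamma^-,\gamma^+\}=\emptyset$ and uses the powers $[\gamma,h]^n$, which strictly speaking need not lie in $\mathcal{C}$ (powers of commutators are in general not commutators), whereas you feed the sequence $(\gamma^n)$ into Lemma \ref{sort} and work with the genuine commutators $[\gamma^n,h]$; your variant is the more watertight one. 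You also spell out, via the renormalization $h_n=g^{-1}g_n$, why non-discreteness yields a sequence of \emph{distinct} elements of $\Gamma$ whose images tend to the identity, a point the paper takes for granted. Your closing caveat is apt but understated: Lemma \ref{sort} genuinely needs $\Gamma$ non-elementary (its proof picks an infinite-order $h$ with fixed points disjoint from a prescribed pair), and for elementary $\Gamma$ the statement is not merely ``trivial'' but can fail (for $\Gamma=\mathbb{Z}$ every representation is vacuously $\mathcal{C}$-well-displacing, including non-discrete ones), so the non-elementarity hypothesis appearing elsewhere in the paper should be read into this proposition as well.
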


\begin{proof}
We actually show the contraposition. Suppose that a representation $\rho\in R(\Gamma,G)$ has not a finite kernel. According to Proposition \ref{inf}, this implies that there is an infinite order element $\gamma\in \Gamma$ such that $\rho(\gamma)=\mathrm{Id}$. If $h\in \Gamma$ is such that $\{h^-,h^+\}\cap \{\gamma^-,\gamma^+\}=\emptyset$, then $[\gamma,h]$ has infinite order whereas $\rho([\gamma,h])=\mathrm{Id}$. It follows that $l_S([\gamma,h]^n) \rightarrow \infty$ and $l(\rho([\gamma,h]^n))=0$ for all $n\in \mathbb{N}$.\\

Suppose now that a representation $\rho\in R(\Gamma,G)$ is not discrete, i.e, there is $(\gamma_n)$ a sequence of elements of $\Gamma$ such that $\rho(\gamma_n)$ is a sequence of distinct elements of $G$ and $\rho(\gamma_n)\rightarrow \mathrm{Id}$. According to Lemma \ref{sort}, there is $h\in \Gamma$ such that $(l_S([\gamma_n,h]))$ is not bounded whereas $l(\rho([\gamma_n,h]))\rightarrow 0$.\\

In both cases, $\rho$ is not $\mathcal{C}$-well-displacing.
\end{proof}

We proved the following inclusions of $Out(\Gamma)$-invariant subsets of $X(\Gamma,G)$ :

$$QC(\Gamma,G) \subset S_\mathcal{C}(\Gamma,G) \subset D_\mathcal{C}(\Gamma,G) \subset AH(\Gamma,G)$$

We now give some new characterizations of convex cocompact representations in $PSL_2(\mathbb{C})$ thanks to Theorem \ref{Sullivan} and Proposition \ref{CS2}.

\begin{thm}\label{major}
Let $\Gamma$ be a non-elementary hyperbolic group, let $\mathcal{C}$ be the subset of commutators of $\Gamma$ and let $\mathcal{D}=[\Gamma,\Gamma]$ be the derived subgroup of $\Gamma$.\\
Let $\rho\in R(\Gamma,PSL_2(\mathbb{C}))$, the following are equivalent :
\begin{itemize}
\item $\rho$ is convex cocompact,
\item $\rho$ is $\mathcal{C}$-stable,
\item $\rho$ is $\mathcal{D}$-well-displacing.
\end{itemize}
\end{thm}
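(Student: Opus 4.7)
The plan is to prove the two equivalences $(1)\Leftrightarrow(2)$ and $(1)\Leftrightarrow(3)$ separately, in each case using that the forward implication is essentially immediate from the quasi-convex case and that the backward implication follows by combining an openness statement with Sullivan's theorem (Theorem \ref{Sullivan}) and the discreteness criterion of Proposition \ref{CS2}.

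For $(1)\Rightarrow(2)$: if $\rho$ is convex cocompact then $\rho$ is quasi-convex, so by Proposition \ref{qc} every orbit map is a quasi-isometric embedding. In particular it sends every geodesic of $C_S(\Gamma)$, and hence every geodesic of $L_S(\mathcal{C})\subset L_S(\Gamma)$, to a uniform quasi-geodesic of $\mathbb{H}^3$. Thus $\rho$ is $\mathcal{C}$-stable. For $(2)\Rightarrow(1)$, suppose $\rho$ is $\mathcal{C}$-stable. By Proposition \ref{ouvert}, $S_\mathcal{C}(\Gamma,PSL_2(\mathbb{C}))$ is an open neighbourhood of $[\rho]$ in $X(\Gamma,PSL_2(\mathbb{C}))$. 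Every class $[\sigma]$ in this neighbourhood is $\mathcal{C}$-stable; by Proposition \ref{displacing} it is therefore $\mathcal{C}$-well-displacing, and by Proposition \ref{CS2} it lies in $AH(\Gamma,PSL_2(\mathbb{C}))$. Hence $[\rho]$ lies in the interior of $AH(\Gamma,PSL_2(\mathbb{C}))$, which by Theorem \ref{Sullivan} coincides with $CC(\Gamma,PSL_2(\mathbb{C}))$.

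For $(1)\Rightarrow(3)$: a convex cocompact representation is quasi-convex, hence $\Gamma$-well-displacing by Proposition \ref{wd}, so the well-displacing inequality in particular holds on the subset $\mathcal{D}\subset\Gamma$. For $(3)\Rightarrow(1)$: since $\mathcal{D}=[\Gamma,\Gamma]$ is a characteristic subgroup of $\Gamma$, Theorem \ref{A-bien} applies and $\rho$ is in fact $\mathcal{D}$-stable. Then the same argument as above goes through: $S_\mathcal{D}(\Gamma,PSL_2(\mathbb{C}))$ is an open neighbourhood of $[\rho]$ by Proposition \ref{ouvert}, each $[\sigma]$ in it is $\mathcal{D}$-well-displacing by Proposition \ref{displacing}, hence $\mathcal{C}$-well-displacing (since $\mathcal{C}\subset\mathcal{D}$), hence in $AH(\Gamma,PSL_2(\mathbb{C}))$ by Proposition \ref{CS2}. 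Theorem \ref{Sullivan} then gives $[\rho]\in CC(\Gamma,PSL_2(\mathbb{C}))$.

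There is no real obstacle: the entire content of the theorem is packaged into three earlier statements which must be combined in the right order. The mildly subtle point is that the argument relies essentially on $PSL_2(\mathbb{C})$ only through Sullivan's theorem (Theorem \ref{Sullivan}), which characterises $CC$ as $\mathrm{Int}(AH)$; if an analogous statement were available for a more general target group $G$, the same argument would produce the same characterisation in that setting. Another point worth being careful about is that Theorem \ref{A-bien} is used only for $\mathcal{D}$ (which is a subgroup) and not for $\mathcal{C}$ (which is not), so one cannot shortcut $(2)\Leftrightarrow(3)$ through that theorem directly; instead the equivalence must be threaded through $(1)$, using openness of $S_{\mathcal{C}}$ on one side and of $S_{\mathcal{D}}$ on the other.
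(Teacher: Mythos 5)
Your proof is correct and follows essentially the same route as the paper: both equivalences are obtained by combining the openness of the stable sets, Propositions \ref{displacing} and \ref{CS2}, Theorem \ref{Sullivan}, and Theorem \ref{A-bien} for the characteristic subgroup $\mathcal{D}$. The only cosmetic difference is in $(3)\Rightarrow(1)$, where the paper passes from $\mathcal{D}$-stable to $\mathcal{C}$-stable and invokes the already-proved first equivalence, whereas you rerun the openness-plus-Sullivan argument directly with $S_{\mathcal{D}}$; both are valid.
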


\begin{proof}
The first equivalence comes from Theorem \ref{Sullivan} and the fact that $S_\mathcal{C}(\Gamma,G)$ is an open set which naturally contains $CC(\Gamma,G)$ and is contained in $AH(\Gamma,G)$ according to Proposition \ref{CS2}.\\
For the equivalence between convex cocompact and $\mathcal{D}$-well-displacing representations, recall that a convex cocompact representation is $\Gamma$-well-displacing according to Proposition \ref{wd}, hence $\mathcal{D}$-well-displacing and conversely, a $\mathcal{D}$-well-displacing representation is $\mathcal{D}$-stable according to Theorem \ref{A-bien}, hence $\mathcal{C}$-stable, hence convex cocompact.
\end{proof}

Recalling the geometric interpretation of $A$-stable representations due to Lee (see Proposition \ref{A-cc}), we can deduce the following corollary :

\begin{cor}
Let $\Gamma$ be a torsion-free non-elementary hyperbolic group and let $\mathcal{C}$ be the subset of commutators of $\Gamma$.\\
A representation in $R(\Gamma,PSL_2(\mathbb{C}))$ is convex cocompact if and only if it is $\mathcal{C}$-convex cocompact.
\end{cor}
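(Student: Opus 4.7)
The plan is to derive the corollary by combining Theorem~\ref{major} (which identifies convex cocompact representations with the $\mathcal{C}$-stable ones) with Lee's geometric characterization in Proposition~\ref{A-cc} (which, for discrete and faithful representations, identifies $\mathcal{C}$-stability with $\mathcal{C}$-convex cocompactness). Thus the proof reduces to checking that both sides of the claimed equivalence force $\rho$ to be discrete and faithful, after which the chain of equivalences closes up.

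For the forward direction, suppose $\rho$ is convex cocompact. By definition $\rho$ has finite kernel and discrete image, and since $\Gamma$ is torsion-free the finite kernel must be trivial, so $\rho$ is discrete and faithful. Theorem~\ref{major} supplies $\mathcal{C}$-stability, and Proposition~\ref{A-cc} then yields that $\rho$ is $\mathcal{C}$-convex cocompact.

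For the reverse direction, suppose $\rho$ is $\mathcal{C}$-convex cocompact. Faithfulness is handled by a centre argument: if $\gamma\in\ker\rho$ were non-trivial, then $\rho([\gamma,h])=e$ for every $h\in\Gamma$, but condition~(1) of the definition forces $\rho(c)$ to be hyperbolic, hence non-trivial, for every non-trivial commutator $c$; thus $[\gamma,h]=e$ for all $h$, placing $\gamma$ in $Z(\Gamma)$, which is trivial for a non-elementary torsion-free hyperbolic group. Once discreteness is also established, Proposition~\ref{A-cc} produces $\mathcal{C}$-stability and Theorem~\ref{major} concludes that $\rho$ is convex cocompact.

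The hard part will be establishing discreteness in the reverse direction. A natural route is by contradiction: if $\rho(\Gamma)$ is not discrete, pick distinct $\gamma_n\in\Gamma$ with $\rho(\gamma_n)\to e$; Lemma~\ref{sort} supplies $h\in\Gamma$ with $l_S([\gamma_n,h])\to\infty$, while $\rho([\gamma_n,h])\to e$. For $n$ large the commutator $[\gamma_n,h]$ is non-trivial, so by condition~(1) $\rho([\gamma_n,h])$ is hyperbolic with axis in $\Omega$ and translation length tending to~$0$. Translating these axes by $\rho(\Gamma)$ to meet a compact set $K\subset\Omega$ with $\rho(\Gamma)\cdot K=\Omega$, extracting a limit basepoint by compactness, and applying an Arzela--Ascoli argument to the isometries $\rho([\gamma_n,h])$ should force a subsequential limit in $PSL_2(\mathbb{C})$, whence the unbounded $l_S$-lengths of the distinct commutators $[\gamma_n,h]$ yield a contradiction. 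Alternatively, if the cocompactness condition is read as requiring the $\rho(\Gamma)$-action on $\Omega$ to be properly discontinuous (as is standard in the geometric tradition), discreteness is built into the hypothesis and the corollary becomes formal.
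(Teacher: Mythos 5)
Your overall route is the same as the paper's: the corollary is obtained by chaining Theorem~\ref{major} (convex cocompact $\Leftrightarrow$ $\mathcal{C}$-stable) with Lee's geometric dictionary, Proposition~\ref{A-cc} ($\mathcal{C}$-stable $\Leftrightarrow$ $\mathcal{C}$-convex cocompact for discrete and faithful representations). The paper states the deduction without further comment; you correctly notice that Proposition~\ref{A-cc} carries a ``discrete and faithful'' hypothesis, so that in the direction ($\mathcal{C}$-convex cocompact $\Rightarrow$ convex cocompact) one must first extract discreteness and faithfulness from the definition. Your faithfulness argument is correct: a nontrivial $\gamma\in\ker\rho$ would force every commutator $[\gamma,h]$ to be trivial (since condition~(1) makes $\rho$ nontrivial, indeed hyperbolic, on every nontrivial commutator), hence $\gamma\in Z(\Gamma)=\{e\}$. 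This is a genuine point the paper glosses over, and in the intended reading (Lee's definition, and the standing hypotheses of Section~4.6, where the geometric interpretation is only formulated for discrete and faithful representations) discreteness is part of the package, so your ``alternatively'' clause is in fact the resolution the paper relies on.

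The one step that does not close as written is your discreteness argument. After conjugating the axes of the commutators $c_n=[\gamma_n,h]$ back into a compact fundamental piece $K$ of $\Omega$ and extracting a convergent subsequence of the $\rho(\delta_n c_n\delta_n^{-1})$, you assert that the unboundedness of $l_S(c_n)$ ``yields a contradiction'' --- but it does not: a sequence of distinct elements of $\Gamma$ whose $\rho$-images converge in $PSL_2(\mathbb{C})$ is exactly a restatement of non-discreteness, not a contradiction with anything in the definition of $\mathcal{C}$-convex cocompactness. To get a contradiction you would have to feed the differences $c_nc_m^{-1}$ (which are nontrivial and have $\rho$-image near the identity) back into a commutator argument as in Lemma~\ref{sort} and Proposition~\ref{CS2}; but that argument uses the $\mathcal{C}$-well-displacing inequality, which you do not yet have at this stage --- condition~(1) only tells you that nontrivial commutators map to hyperbolic elements, with no lower bound on their translation lengths. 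So either carry out that extra bootstrapping (showing that commutators of elements close to the identity with a fixed loxodromic $h'$ are forced to be non-hyperbolic, contradicting condition~(1)), or simply invoke the convention that the cocompactness in condition~(2) presupposes a properly discontinuous, hence discrete, action, which is what both Lee and the paper intend.
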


\section{Appendix}

Let $F_k$ denotes the free group of rank $k\geqslant 3$, we already know that its derived subgroup $[F_k,F_k]$ is testable (it is a consequence of Proposition \ref{well-test}).
In this appendix, we show that the derived subgroup $[F_k,F_k]$ actually contains a finite test subset.

Let $S=\{x_1,...,x_k\}$ be a finite and minimal set of generators and $\B_S=\{x_i,x_ix_j~|x_i,x_j\in S$, $i\ne j \}$ be the subset of elements of $S$ and product of distinct elements of $S$. \\
We define the following finite subsets of the derived subgroup $[F_k,F_k]$ :

\begin{itemize}
\item $\w_{1,S}=\w_1 = \{[b,b'] ~|~  b,b' \in \B_S \}$
\item $\w_{2,S}=\w_2 =\{ [b,w] ~|~  b\in \B_S, w\in \w_1 \}$
\item $\w_{3,S}=\w_3 = \{ ww' ~|~  w,w' \in \w_1 \}$
\item $\mathcal{W}_S =\w_1 \cup \w_2 \cup \w_3$
\end{itemize}

\begin{prop}\label{W}
$\mathcal{W}_S$ is a test subset.
\end{prop}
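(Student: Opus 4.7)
The plan is to imitate the contradiction argument of Theorem~\ref{test} (and Proposition~\ref{well-test}), but use all three layers $\w_1,\w_2,\w_3$ of $\w_S$ in succession. Suppose $\w_S$ is not a test subset, so that a sequence $(\phi_n)$ of distinct elements of $Out(F_k)$ witnesses $(l_S(\phi_n(w)))_n$ being bounded for every $w\in\w_S$. Lift $(\phi_n)$ to $(\varphi_n)\subset Aut(F_k)$ and invoke Theorem~\ref{Best} to obtain a non-trivial small $F_k$-tree $(T,x,\rho)$; restricting to its minimal invariant subtree (which is irreducible by Lemma~\ref{p1}) we may assume the action is irreducible. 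By Corollary~\ref{bornée}, $\rho(w)$ acts elliptically on $T$ for every $w\in\w_S$.

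From the $\w_3$ layer, for any $w,w'\in\w_1$, all three of $\rho(w)$, $\rho(w')$, $\rho(ww')$ are elliptic, so Proposition~\ref{elliptique} yields $C_w\cap C_{w'}\ne\emptyset$. The Helly property (remark after Proposition~\ref{tree}) then gives that $K:=\bigcap_{w\in\w_1}C_w$ is a non-empty closed subtree of $T$. From the $\w_2$ layer, for every $b\in\B_S$ and $w\in\w_1$, write $\rho([b,w])=\rho(bwb^{-1})\rho(w)^{-1}$ and note that the two factors are elliptic with fixed sets $\rho(b)\cdot C_w$ and $C_w$ respectively; applying Proposition~\ref{elliptique} once more gives $\rho(b)\cdot C_w\cap C_w\ne\emptyset$ for every such pair.

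The remaining, decisive step is to show that no $\rho(b)$ with $b\in\B_S$ can be hyperbolic, for then Proposition~\ref{trivial} will contradict the non-triviality provided by Theorem~\ref{Best}. Suppose, for contradiction, that $\rho(b_0)$ is hyperbolic with axis $A_0$. For every $b\in\B_S$, the element $\rho([b_0,b])\in\w_1$ is elliptic, and one argues that this forces $\rho(b)$ to preserve $A_0$ setwise in an orientation-preserving way. Orientation preservation is direct: if $\rho(b)$ reversed orientation on $A_0$, then on $A_0$ one would have $\rho(b)\rho(b_0)\rho(b)^{-1}=\rho(b_0)^{-1}$, making $\rho([b_0,b])$ restrict to the translation $\rho(b_0)^2$ of length $2l(\rho(b_0))>0$, and since $\rho([b_0,b])$ preserves $A_0$ the same argument as for hyperbolic isometries excludes its being elliptic --- contradicting $[b_0,b]\in\w_1$. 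Setwise preservation follows from showing that if $\rho(b)A_0\ne A_0$ then $\rho([b_0,b])$ is hyperbolic: when $A_0\cap\rho(b)A_0$ is empty or a single point, this is immediate from the standard product formula for two hyperbolic isometries with disjoint or once-meeting axes, and the residual long-overlap case is ruled out using the smallness of $(T,\rho)$ together with the multiple simultaneous constraints coming from the finitely many commutators $[b_0,b]$, $b\in\B_S$. Once $\rho(\B_S)$ is seen to setwise preserve $A_0$ with orientation, $\rho(F_k)$ acts on $A_0\cong\R$ by translations, hence abelianly, fixing both ends of $A_0$ and contradicting irreducibility.

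The genuinely delicate point is the setwise preservation in the last paragraph: producing a contradiction in the long-overlap configuration requires combining the smallness of the action with the simultaneous constraints $[b_0,b]\in\w_1$ as $b$ varies over $\B_S$, which is where the hypothesis $k\geqslant 3$ seems to enter essentially. The triple-layer design of $\w_S$ is tailored exactly for this: $\w_3$ furnishes the common fixed subtree $K$, $\w_2$ controls how each $b\in\B_S$ moves the individual fixed sets $C_w$, and the abundance of commutators in $\w_1$ produces enough constraints to rule out every pathological configuration.
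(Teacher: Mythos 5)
Your overall strategy diverges from the paper's at the decisive moment, and the step you yourself flag as ``the genuinely delicate point'' is exactly where the argument breaks down. You reduce everything to showing that no $\rho(b)$, $b\in\B_S$, can be hyperbolic, and for that you need: if $\rho(b_0)$ is hyperbolic with axis $A_0$ and $\rho(b)A_0\ne A_0$, then $\rho([b_0,b])$ is hyperbolic. This is simply false in general for $\mathbb{R}$-trees: writing $\rho([b_0,b])$ as the product of the two hyperbolic isometries $\rho(b_0)$ and $\rho(b)\rho(b_0)^{-1}\rho(b)^{-1}$ (axes $A_0$ and $\rho(b)A_0$, equal translation lengths), the product can be elliptic precisely when the axes overlap in a long segment and the translations are anti-coherent. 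You acknowledge this residual case but dispose of it only with the phrase ``ruled out using the smallness of $(T,\rho)$ together with the multiple simultaneous constraints coming from the finitely many commutators'' --- no actual argument is given, and it is not clear one exists along these lines: a single pair $(b_0,b)$ in the long-overlap configuration gives no segment with large stabilizer, so smallness has nothing to bite on without substantial extra work. Since this is the entire content of the proposition beyond the routine setup, the proof is incomplete. (Secondary issues: the common fixed subtree $K=\bigcap_{w\in\w_1}C_w$ and the conditions $\rho(b)C_w\cap C_w\ne\emptyset$ that you extract from $\w_3$ and $\w_2$ are never used in your decisive step, so your argument does not actually exploit the three-layer design of $\w_S$.)

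For comparison, the paper never attempts to show that the generators act elliptically. It fixes a hyperbolic $b\in\B_S$ (which exists by Proposition \ref{trivial}) and exploits the ellipticity of the elements of $\w_1$ rather than of the generators: Lemma \ref{he} (proved in the appendix for exactly this purpose) shows that if $w\in\w_1$ is elliptic and $[b,w]\in\w_2$ is elliptic, then the fixed-point set $C_w$ meets the axis $A_b$ in a segment of length at least $l_T(b)$. Taking three elements $w_1,w_2,w_3\in\w_1$ that pairwise generate free groups of rank two (this is where $k\geqslant 3$ enters), the layer $\w_3$ plus Proposition \ref{elliptique} gives $C_i\cap C_j\ne\emptyset$, the Helly property puts the triple intersections on $A_b$, and three pairwise-meeting subsegments of $A_b\cong\mathbb{R}$, each of length $\geqslant l_T(b)>0$, must contain two that share a non-degenerate segment; that segment is pointwise fixed by a rank-two free group, contradicting smallness. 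This route bypasses the long-overlap obstruction entirely, which is why the paper's proof closes and yours does not.
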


In order to prove this proposition, we will need the following lemma.

\begin{lem}\label{he}
Let $T$ be an $\R$-tree and $g,h\in Isom(T)$ such that $g$ is hyperbolic and $h$ is elliptic.\\ 
If $l_T([g,h])=0$, then $C_g\cap C_h$ is a geodesic segment of length at least $l_T(g)$.
\end{lem}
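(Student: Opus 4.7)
The plan is to write $[g,h]=(ghg^{-1})h^{-1}$ as a product of two elliptic isometries, apply Proposition \ref{elliptique} to convert the hypothesis $l_T([g,h])=0$ into a nonempty-intersection statement, and then exploit the geometry of the projection onto the axis $C_g$ to extract the desired segment.

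By Proposition \ref{cha set}, $C_{ghg^{-1}}=gC_h$ and $C_{h^{-1}}=C_h$, so Proposition \ref{elliptique} yields $l_T([g,h])=2d(gC_h,C_h)$. The hypothesis thus gives $d(gC_h,C_h)=0$, and the standard bridge construction (two disjoint closed subtrees of an $\R$-tree are separated by a strictly positive distance) forces $gC_h\cap C_h\ne\emptyset$; equivalently, there exists $z\in C_h$ with $g(z)\in C_h$.

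Next I would set $z':=\pi_{C_g}(z)$, the projection of $z$ onto the axis of $g$, and observe that $\pi_{C_g}(g(z))=g(z')$ since $g$ is an isometry of $T$ stabilising $C_g$. The standard displacement formula for a hyperbolic isometry in an $\R$-tree reads $d(z,g(z))=2d(z,C_g)+l_T(g)$, which forces the geodesic $[z,g(z)]$ to decompose as $[z,z']\cup[z',g(z')]\cup[g(z'),g(z)]$. Because $z,g(z)\in C_h$ and $C_h$ is a subtree (hence convex), $[z,g(z)]\subset C_h$; in particular $[z',g(z')]\subset C_h$. As $[z',g(z')]$ also lies in $C_g$ and has length $l_T(g)$, it is contained in $C_g\cap C_h$. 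Finally, $C_g\cap C_h$ is a closed convex subset of the line $C_g$, hence a (possibly unbounded) geodesic arc, inside which we have just exhibited a sub-arc of length $l_T(g)$.

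The main subtlety will be justifying the bridge construction in the very first step, i.e.\ that zero distance between two disjoint closed subtrees of an $\R$-tree is impossible; everything else then reduces to routine projection geometry in a $0$-hyperbolic space.
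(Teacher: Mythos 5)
Your proof is correct, but it runs in the opposite direction from the paper's: you argue directly, while the paper proves the contrapositive. Concretely, the paper assumes $C_g\cap C_h$ is empty or is a segment of length $<l_T(g)$, and in either case uses the translation of $g$ along its axis together with the Helly property for three closed subtrees (Proposition \ref{tree}) to show $C_h\cap g(C_h)=\emptyset$, whence $l_T([g,h])=2d(g(C_h),C_h)>0$ by Proposition \ref{elliptique}. You instead extract a point $z\in C_h$ with $g(z)\in C_h$ and push it to the axis via the displacement formula $d(z,g(z))=2d(z,C_g)+l_T(g)$, exhibiting an explicit subsegment $[z',g(z')]$ of length exactly $l_T(g)$ inside $C_g\cap C_h$. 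Your route is arguably more informative (it produces the segment rather than ruling out its absence) at the cost of invoking the displacement formula, which the paper never states but which is standard (Culler--Morgan). Two small remarks. First, the ``main subtlety'' you flag --- that disjoint closed subtrees are at positive distance --- can be sidestepped entirely: in this paper's framework an isometry of an $\R$-tree with $l_T=0$ is elliptic, so $ghg^{-1}$, $h^{-1}$ and their product $[g,h]$ are all elliptic and the ``in particular'' clause of Proposition \ref{elliptique} hands you $g(C_h)\cap C_h\ne\emptyset$ directly. Second, your decomposition of $[z,g(z)]$ should be justified by the additivity of the four distances $d(z,z')+d(z',g(z'))+d(g(z'),g(z))=d(z,g(z))$ together with uniqueness of arcs in an $\R$-tree; as written this is implicit but it is the step doing the work.
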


\begin{proof}
We write $A_g=C_g$ the axis of $g$ on which $g$ acts by translation and we recall that $C_h$ is the set of fixed points of $h$.\\
The commutator $[g,h]=ghg^{-1}h^{-1}$ is the product of two elliptic elements, namely $ghg^{-1}$ and $h^{-1}$ with set of fixed points $g(C_h)$ and $C_h$ respectively according to Proposition \ref{cha set}.\\

The proof is by contraposition :\\

If $A_g\cap C_h=\emptyset$, then $g(C_h)$ and $C_h$ are disjoint (see \cite[1.7]{CullerMorgan}).\\
If $A_g\cap C_h\ne \emptyset$ and $A_g\cap C_h$ is a geodesic segment of length strictly less than $l_T(g)$, then the fact that $g$ acts by translation of length $l_T(g)$ on $A_g$ yields that $A_g\cap g(C_h)\ne \emptyset$ and $A_g\cap C_h \cap g(C_h)= \emptyset$. Proposition \ref{tree} now implies that $C_h \cap g(C_h)= \emptyset$.\\

In both cases, Proposition \ref{elliptique} implies that $l_T(ghg^{-1}h^{-1})>0$.
\end{proof}

\begin{proof}[Proof of Proposition \ref{W}]
Let $(\varphi_n)$ be a sequence of $Aut(F_k)$ that projects on a sequence of distinct elements of $Out(F_k)$. Let $(T,x,\rho)$ be the based $F_k$-tree induced by $(\varphi_n)$ in Theorem \ref{Best}.\\

By contradiction, suppose that $(l_T(\varphi_n(w)))$ is bounded for all $w\in \mathcal{W}_S$. Corollary \ref{bornée} implies that $l_T(w)=l(\rho(w))=0$ for all $w\in \w_S$.\\
Since $(T,\rho)$ is not a trivial $F_k$-tree, there is $b\in \B_S$ such that $l_T(b)>0$ according to Proposition \ref{trivial}. We denote by $A_b=C_b$ the axis of $b$ for its action on $T$, we recall that $b$ acts on $A_b$ by translation of length $l_T(b)$.\\
Let $w_1,w_2,w_3\in \w_1$ be 3 distinct elements such that the subgroup $<w_i,w_j>$ generated by $w_i$ and $w_j$ is isomorphic to a free group of rank 2 for all $i\ne j$. We denote by $C_i=C_{w_i}$ the set of fixed points of $w_i$ for its action on $T$.\\

Since $l_T(w_i)=l_T(w_j)=l_T(w_iw_j)=0$ for all $i\ne j$ by hypothesis, Proposition \ref{elliptique} yields $C_i\cap C_j\ne \emptyset$ for all $i\ne j$.\\
Moreover, $l_T([b,w_i])=0$, hence Lemma \ref{he} implies that $A_b\cap C_i$ is a geodesic segement of $T$ of length at least $l_T(b)$ for all $i$.\\
Proposition \ref{tree} now implies that $A_b\cap C_i \cap C_j \ne \emptyset$ for all $i\ne j$.\\
If $A_b\cap C_1\cap C_2$  is a geodesic segment of positive length, we get a contradiction since $(T,\rho)$ is a small $F_k$-tree and the subgroup $<w_1,w_2>$ is free of rank $2$.\\
If $A_b\cap C_1\cap C_2$ is just a point, then $A_b\cap C_3\cap C_2$ or $A_b\cap C_3 \cap C_1$ must be a geodesic segment of positive length, contradicting once more the hypothesis that $(T,\rho)$ is a small $F_k$-tree.\\

Conclusion : for all sequence $(\phi_n)$ of  distinct elements of $Out(F_k)$, there is $w\in \w_S$ such that $\limsup_n l_S(\phi_n(w))=\infty$.
\end{proof}

\end{document}